
\documentclass[a4paper,12pt]{article}
\usepackage{amsmath}
\usepackage{amssymb}			
\usepackage{amsthm}\usepackage{latexsym}
\usepackage{amsmath}

\usepackage{amsthm}
\usepackage[dvipdfmx]{graphicx}
\usepackage{txfonts}
\usepackage{bm}
\usepackage{color}
\usepackage{epic,eepic}

\usepackage{rotating}

\usepackage{geometry}
\geometry{left=27mm,right=27mm,top=27mm,bottom=27mm}

\numberwithin{equation}{section}

\newtheorem{theorem}{Theorem}[section]
\newtheorem{proposition}[theorem]{Proposition}

\newtheorem{lemma}[theorem]{Lemma}
\newtheorem{remark}{Remark}[section]
\newtheorem{example}{Example}[section]

 
 

\newcommand{\memo}[1]{{\bf \small {\color{red}[MEMO:}} {\color{blue} #1} \ {\bf{\color{red}:end]} }}  
\newcommand{\OMIT}[1]{{\bf [OMIT:} #1 \ {\bf --- end OMIT] }}  
   \renewcommand{\OMIT}[1]{}            

\newcommand{\RR}{{\mathbb{R}}}
\newcommand{\ZZ}{{\mathbb{Z}}}
\newcommand{\vecone}{{\bf 1}}
\newcommand{\veczero}{{\bf 0}}
\newcommand{\dom}{{\rm dom\,}}

\newcommand{\suppp}{{\rm supp}\sp{+}}
\newcommand{\suppm}{{\rm supp}\sp{-}}

\newcommand{\unitvec}[1]{\bm{1}\sp{#1}}
\newcommand{\argmin}{\arg \min}
\newcommand{\conv}{\Box\,}

\newcommand{\todaye}{\the\year/\the\month/\the\day}

\newcommand{\Lnat}{{L$^{\natural}$}}
\newcommand{\Mnat}{{M$^{\natural}$}}

\newcommand{\Bvexb}{\mbox{\rm\bf (B-EXC)}}
\newcommand{\Bnvex}{\mbox{\rm (B$\sp{\natural}$-EXC)} }
\newcommand{\Bnvexb}{\mbox{\rm\bf (B$\sp{\natural}$-EXC)}}

\newcommand{\Mvexb}{\mbox{\rm\bf (M-EXC)}}

\newcommand{\Mnvexb}{\mbox{\rm\bf (M$\sp{\natural}$-EXC)}}

\newcommand{\YES}{Y \ }
\newcommand{\NO}{\quad \textbf{\textit{N}}}

\begin{document}

\title{On Basic Operations Related to 
\\
Network Induction of Discrete Convex Functions
}

\author{
Kazuo Murota%
\thanks{Department of Economics and Business Administration,
Tokyo Metropolitan University, 
Tokyo 192-0397, Japan, 
murota@tmu.ac.jp}
}

\date{January 2020 / February 2020 / May 2020 / August 2020}

\maketitle

\begin{abstract}
Discrete convex functions are used in many areas,
including operations research, discrete-event systems, game theory, and economics.
The objective of this paper is to investigate
basic operations such as direct sum, splitting, and aggregation
that are related to network induction of
discrete convex functions as well as discrete convex sets.
Various kinds of discrete convex functions in discrete convex analysis
are considered 
such as
integrally convex functions,
{\rm L}-convex functions, {\rm M}-convex functions,
multimodular functions, and discrete midpoint convex functions.
\end{abstract}

{\bf Keywords}:
Discrete convex analysis,  Integrally convex function,
Multimodular function, 
Splitting, Aggregation, Network induction

\newpage
\tableofcontents
\newpage



\section{Introduction}
\label{SCintro}

In matroid theory it is well known that a matroid is induced or transformed
by bipartite graphs through matchings
(\cite[Section 11.2]{Oxl11}, \cite[Section 8.2]{Wel76}).
Let $G$ be a bipartite graph with vertex bipartition consisting of
$N = \{ 1,2, \ldots, n \}$ and 
$M = \{ 1,2, \ldots, m \}$ as in Fig.~\ref{FGbiparmatroid} (a).
When a matroid $(N, \mathcal{I})$ is given on $N$ in terms of the family $\mathcal{I}$ of independent sets,
let $\mathcal{J}$ denote the collection of subsets of $M$ which can be matched in $G$
with an independent subset of $N$.
Then $(M, \mathcal{J})$ is a matroid, which is referred to as the 
matroid induced from $(N, \mathcal{I})$ by $G$.
We may regard this construction as a transformation of a matroid to another matroid.
If a free matroid is given on $N$, for example,
the matroid induced on $M$ is a transversal matroid.
In particular, 
a free matroid on $N$ is transformed to a partition matroid on $M$,
if the graph $G$ has a special structure
like Fig.~\ref{FGbiparmatroid} (b), where each vertex of $M$ has exactly one incident arc.
The union (or sum) operation for matroids
can also be understood as a transformation of this kind.
Given two matroids on $\{ 1,2, \ldots, n \}$ we consider 
a bipartite graph of the form of Fig.~\ref{FGbiparmatroid} (d),
in which the direct sum of the given matroids is associated with
the left vertex set $\{ 1,2, \ldots, n \} \cup \{ 1',2', \ldots, n' \}$,
and the induced matroid coincides with the union of the given matroids.
It is possible to generalize the above construction 
by replacing a bipartite graph with a general directed graph
and matchings with linkings;
see \cite[Section 11.2]{Oxl11} and \cite[Section 13.3]{Wel76}.

\begin{figure}\begin{center}
 \includegraphics[width=0.8\textwidth,clip]{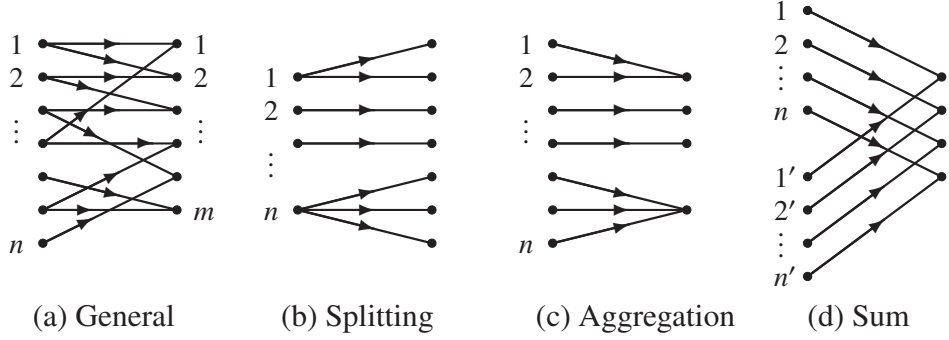}
\caption{Bipartite graphs for operations on discrete structures}
\label{FGbiparmatroid}
\end{center}\end{figure}

In discrete convex analysis 
\cite{Fuj05book,Mdca98,Mdcasiam,Mbonn09,Mdcaeco16},
the transformation of matroids described above 
is generalized to a transformation of discrete convex sets and functions
by capacitated networks, which is called the network transformation.
The objective of this paper is to systematically investigate 
the network transformation,
together with the related basic operations, 
for discrete convex sets and functions.
The network transformation 
in discrete convex analysis 
is more general than
the transformation of matroids 
in the following two respects:
\begin{itemize}
\item
From $\{ 0, 1 \}$ to $\ZZ$:
A set family on the ground set $\{ 1,2, \ldots, n \}$ 
can be identified with a subset of $\{ 0, 1 \}\sp{n}$,
and hence the transformation of a matroid can be regarded as a transformation 
of a subset of $\{ 0, 1 \}\sp{n}$ to a subset of $\{ 0, 1 \}\sp{m}$.
A discrete convex set is a subset of $\ZZ\sp{n}$
that has some defining properties, and the network transformation 
of a discrete convex set amounts to 
a transformation of a subset $S$ of $\ZZ\sp{n}$ to a subset $T$ of $\ZZ\sp{m}$
via integral flows in an arc-capacitated network.
We are naturally interested in whether the resulting set $T$ 
is a discrete convex set of the same kind.

\item
From sets to functions:
A discrete convex set, which is a subset of $\ZZ\sp{n}$,
can be identified with its indicator function, which is equal to 0 on that set
and $+\infty$ elsewhere.
We generalize this by considering functions 
$f: \ZZ\sp{n} \to \RR \cup \{ +\infty \}$ 
that have certain discrete convexity properties.
The network transformation of a discrete convex function is 
defined via integral flows in a network with arc costs.
We are naturally interested in whether the resulting function 
$g: \ZZ\sp{m} \to \RR \cup \{ +\infty \}$ 
is a discrete convex function of the same kind.
\end{itemize}

The network transformation 
of a subset of $\ZZ\sp{n}$
is defined  (roughly) as follows.
For simplicity of presentation, 
we restrict ourselves to a bipartite network.
Let $G$ be a bipartite graph with vertex bipartition consisting of
$N = \{ 1,2, \ldots, n \}$ and 
$M = \{ 1,2, \ldots, m \}$ as in Fig.~\ref{FGbiparmatroid} (a),
and suppose that a nonnegative integer (upper) capacity is specified for each arc,
where the lower capacity is assumed to be zero.
When a set $S \subseteq \ZZ\sp{n}$ is given, 
let $T$ denote the collection of vectors $y \in \ZZ\sp{m}$ 
which can be linked from some $x \in S$ 
via an integer-valued flow meeting the capacity constraint.
If we interpret $S$ as a set of feasible supply vectors,
then the resulting set $T$ represents the set of demand vectors 
that can be realized by a feasible transportation scheme.
The {\em transformation} (or {\em induction}) by $G$ will mean 
the operation of obtaining $T$ from $S$.

It turns out to be convenient to single out two special types of bipartite graphs,
which are depicted in Fig.~\ref{FGbiparmatroid} (b) and (c).
In the graph in (b), each vertex of $M$ has exactly one incident arc,
and the transformation represented by such a graph will be called a {\em splitting}.
In the graph in (c), in contrast, each vertex of $N$ has exactly one incident arc,
and the transformation by such a graph will be called an {\em aggregation}.
While splitting and aggregation are special cases of the transformation by bipartite networks,
they are general enough in the sense
that the transformation by an arbitrary bipartite graph $G$ can be represented
as a composition of the transformation by a graph $G_{1}$ of type (b) 
followed by the transformation by a graph $G_{2}$ of type (c),
where $G_{1}$ and $G_{2}$ are obtained 
from $G$
(drawn as in Fig.~\ref{FGbiparmatroid})
by ``vertically cutting $G$ 
into left and right parts.'' 
The {\em Minkowski sum}
$S_{1}+S_{2} = \{ y \in \ZZ\sp{n} \mid y= x+ x', \ x \in S_{1}, \  x' \in S_{2} \}$
of sets $S_{1}$, $S_{2} \subseteq \ZZ\sp{n}$ 
is represented by the graph in Fig.~\ref{FGbiparmatroid} (d),
that is, the Minkowski sum can be represented as a combination of the direct sum
and aggregation operations.
Furthermore, 
it is known \cite{KMT07jump} that the transformation 
by a general capacitated network (to be defined in Section \ref{SCnettransset})
can be realized by a combination of splitting, aggregation,
and other basic operations.

The network transformation  of a function on $\ZZ\sp{n}$
is defined  (roughly) as follows.
We continue to refer to a bipartite graph $G$
in Fig.~\ref{FGbiparmatroid} (a), but
we now suppose that each arc is associated with a (convex) function 
to represent the cost of an integral flow in the arc.
When a function $f$ on $\ZZ\sp{n}$ is given,
we interpret $f(x)$ as the production cost of $x \in \ZZ\sp{n}$.
For $y \in \ZZ\sp{m}$, interpreted as a demand,
let $g(y)$ denote the minimum cost of an integral flow
that meets the demand $y$ by an appropriate choice of production $x$ 
and transportation scheme using an integer-valued flow.
The {\em transformation} (or {\em induction}) by $G$ will mean 
the operation of obtaining $g$ from $f$.

The special types of bipartite graphs in Fig.~\ref{FGbiparmatroid} (b) and (c)
continue to play the key role also for operations on functions.
The transformations of a function by the graphs in (b) and (c) are called a {\em splitting}
and an {\em aggregation} of the function, respectively.
As with the transformation of a discrete convex set,
the transformation of a function by an arbitrary bipartite graph can be represented
as a composition of the transformation by a graph of type (b) 
followed by the transformation by a graph of type (c).
For functions $f_{1}$ and $f_{2}$ on $\ZZ\sp{n}$,
their {\em convolution} 
\begin{equation*} 
(f_{1} \conv f_{2})(y) =
 \inf\{ f_{1}(x) + f_{2}(x') \mid  y = x + x' \}
\qquad (y \in \ZZ\sp{n}) 
\end{equation*}
is represented by the graph in Fig.~\ref{FGbiparmatroid} (d),
that is, the convolution can be represented as a combination of the direct sum
and aggregation operations.
Furthermore, 
it is known \cite{KMT07jump} that the transformation of a function 
by a general network (to be defined in Section \ref{SCnettransfn})
can be realized by a combination of splitting, aggregation, and other basic operations.

Discrete convex functions treated in this paper
include
integrally convex functions 
\cite{FT90},
{\rm L}- and \Lnat-convex functions
\cite{FM00,Mdca98},
{\rm M}- and \Mnat-convex functions
\cite{Mstein96,Mdca98,MS99gp},
multimodular functions
\cite{Haj85},
globally and locally discrete midpoint convex functions 
\cite{MMTT20dmc},
and M- and \Mnat-convex functions on jump systems
\cite{Mmjump06,Mmnatjump19}.
It is noted that
``\Lnat'' and ``\Mnat''  should be pronounced as ``ell natural'' 
 and  ``em natural,'' respectively.  
L- and \Lnat-convex functions 
have applications in several different fields including
image processing,
auction theory, 
inventory theory, 
and scheduling
\cite{Che17,Mdcaeco16,Shi17L,SCB14}.
M- and \Mnat-convex functions 
find applications in game theory and economics
\cite{Mdcasiam,Mdcaeco16,MTcompeq03,ST15jorsj}
as well as in matrix theory
\cite[Chapter 5]{Mspr2000}.
Multimodular functions
have been used as a fundamental tool 
in the literature of queueing theory, discrete-event systems, and operations research
\cite{AGH00,AGH03,FHS17,GY94mono,Haj85,KS03,LY14,SW93markov,dWvS00,WS87netque,ZL10}.
Jump M- and \Mnat-convex functions find applications in several fields including
matching theory \cite{BK12,KST12cunconj,KT09evenf,Tak14fores}
and algebra \cite{Bra10halfplane}.
Integrally convex functions are
used in formulating discrete fixed point theorems
\cite{Iim10,IMT05,Yan09fixpt}, 
and designing solution algorithms for discrete systems of nonlinear equations
\cite{LTY11nle,Yan08comp}.
In game theory 
the integral concavity of payoff functions 
guarantees the existence of a pure strategy equilibrium 
in finite symmetric games \cite{IW14}.

This paper is intended to be a continuation of the recent paper \cite{Msurvop19},
which is the first systematic study of fundamental operations 
for various kinds of discrete convex functions
including multimodular functions and discrete midpoint convex functions.
While the paper \cite{Msurvop19}
dealt with basic operations such as restriction, projection, scaling, and convolution,
this paper focuses on operations 
related to the network transformation
including direct sum, splitting, and aggregation.
We mention that a systematic study of fundamental operations
for discrete convex functions,
though not covering multimodular functions and discrete midpoint convex functions,
was conducted in \cite{MS01rel} 
at the early stage of discrete convex analysis.

Table~\ref{TBoperation3dcsetZ} 
is a summary of the behavior of discrete convex sets 
with respect to the operations of
direct sum, splitting, aggregation, and network transformation
discussed in this paper.
In the table, ``Y'' means that the set class is closed under the operation
and ``\textbf{\textit{N}}'' means it is not,
where we use different fonts for easier distinction.
For the results obtained in the paper,
specific references are made to the corresponding
propositions (Propositions \ref{PRsetdirsumMult}, \ref{PRsplitsetIC}, and \ref{PRsplitsetMult}) 
and counterexamples.
The results about M- and \Mnat-convex sets 
are not particularly new, as they are no more than restatements of well known facts 
in the literature of polymatroids and submodular functions
\cite{Fra11book,Fuj05book}.
These operations for jump systems are considered by 
Bouchet and Cunningham \cite{BouC95}
and Kabadi and Sridhar \cite{KS05jump}.
Table~\ref{TBoperation3dcfnZ}
offers a similar summary
for operations on functions,
with pointers to the major propositions 
(Propositions \ref{PRfndirsumMult}, \ref{PRsplitfnIC}, and \ref{PRsplitfnMult})
as well as to counterexamples in this paper.
Network induction for M-convex functions originates in \cite{Mstein96},
and that for jump M-convex functions is due to \cite{KMT07jump}.  
The reader is referred to
Tables 3 to 6 in \cite{Msurvop19}
for summaries 
about other operations such as restriction, projection, scaling, and convolution.

\begin{table}
\begin{center}
\caption{Operations on discrete convex sets}
\label{TBoperation3dcsetZ}

\medskip


\begin{tabular}{l|c|c|c|c|l}
\hline  
 Discrete convex set & Direct & Splitting  & Aggrega-  & Network & Reference
\\ 
 & sum &  & tion & induction &
\\ \hline \hline
 Integer box  & \YES & \NO & \YES  & \NO   &
\\ 
      &  & Ex.\ref{EXpointsplit} &  & Ex.\ref{EXpointsplit}  &   (this paper)
\\ \hline 
 Integrally convex & \YES & \YES & \NO  & \NO  
   &  
\\
      &  & Prop.\ref{PRsplitsetIC} &  Ex.\ref{EXicvsetaggr} & Ex.\ref{EXicvsetaggr}  &  (this paper)
\\ \hline 
\Lnat-convex       & \YES & \NO & \NO & \NO  & 
\\ 
      &  & Ex.\ref{EXpointsplit} & Ex.\ref{EXlnatsetaggr} & Ex.\ref{EXpointsplit}, \ref{EXlnatsetaggr}  &  (this paper)
\\ \hline 
L-convex           & \YES & \NO & \NO & \NO & 
\\ 
      &  & Ex.\ref{EXlsetsplit} & Ex.\ref{EXlsetaggr} & Ex.\ref{EXlsetsplit}, \ref{EXlsetaggr}  &  (this paper)
\\ \hline 
\Mnat-convex       & \YES & \YES  & \YES  & \YES 
  & \cite{Fra11book,Mdcasiam,MS99gp} 
\\ 
      &  &  &  &   &  
\\ \hline 
M-convex           & \YES & \YES & \YES  & \YES 
  & \cite{Fuj05book,Mdcasiam}
\\ 
      &  &  &  &   &  
\\ \hline 
\hline
Multimodular       & \YES & \YES & \NO  &  \NO   & 
\\ 
      & Prop.\ref{PRsetdirsumMult} & Prop.\ref{PRsplitsetMult} & Ex.\ref{EXmmsetaggr} & Ex.\ref{EXmmsetaggr}  & (this paper)  
\\ \hline 
Disc.~midpt convex   & \NO & \NO & \NO & \NO  & 
\\ 
      & Ex.\ref{EXdmcsetdirsum} & Ex.\ref{EXpointsplit} &  Ex.\ref{EXicvsetaggr} & Ex.\ref{EXicvsetaggr}  &  (this paper)
\\ \hline 
Simul.~exch. jump  & \YES & \YES & \YES & \YES   
  &  \cite{KS05jump,Mmnatjump19}  
\\ 
      &  &  &  &   &  
\\ \hline 
Const-parity jump   & \YES & \YES & \YES & \YES   
  &  \cite{BouC95,KS05jump} 
\\ 
      &  &  &  &   &  
\\ \hline 
\multicolumn{6}{l}{``Y'' means ``Yes, this set class is closed under this operation.''} 
\\
\multicolumn{6}{l}{``\textbf{\textit{N}}'' means ``No, this set class is not closed under this operation.''} 
\end{tabular}
\end{center}
\end{table}

\begin{table}
\begin{center}
\caption{Operations on discrete convex functions}
\label{TBoperation3dcfnZ}

\medskip


\begin{tabular}{l|c|c|c|c|l}
\hline  
 Discrete  & Direct & Splitting  & Aggrega-  & Network & Reference
\\ 
 \ convex function & sum &  & tion & induction &
\\ \hline \hline
 Separable convex & \YES & \NO & \YES  & \NO   &
\\
      &  & Ex.\ref{EXpointsplit} &  & Ex.\ref{EXpointsplit}  &   (this paper)
\\ \hline 
 Integrally convex & \YES & \YES & \NO  & \NO  &  
\\
      &  & Prop.\ref{PRsplitfnIC} &  Ex.\ref{EXicvsetaggr} & Ex.\ref{EXicvsetaggr}  &  (this paper)
\\ \hline 
\Lnat-convex       & \YES & \NO & \NO & \NO  & 
\\
      &  & Ex.\ref{EXpointsplit} & Ex.\ref{EXlnatsetaggr} & Ex.\ref{EXpointsplit}, \ref{EXlnatsetaggr}  &  (this paper)
\\ \hline 
L-convex           & \YES & \NO & \NO & \NO & 
\\
      &  & Ex.\ref{EXlsetsplit} & Ex.\ref{EXlsetaggr} & Ex.\ref{EXlsetsplit}, \ref{EXlsetaggr}  &  (this paper)
\\ \hline 
\Mnat-convex       & \YES & \YES  & \YES  & \YES 
  & \cite{Mdcasiam} 
\\
      &  &  &  &   &  
\\ \hline 
M-convex           & \YES & \YES & \YES  & \YES 
  & \cite{Mstein96,Mdcasiam} 
\\
      &  &  &  &   &  
\\ \hline \hline
Multimodular       & \YES & \YES & \NO  &  \NO  & 
\\
      & Prop.\ref{PRfndirsumMult} & Prop.\ref{PRsplitfnMult} & Ex.\ref{EXmmsetaggr} & Ex.\ref{EXmmsetaggr}  & (this paper)  
\\ \hline 
Globally d.m.c.  & \NO & \NO & \NO & \NO  & 
\\
      & Ex.\ref{EXdmcsetdirsum}, \ref{EXdirsumdmcfn} & Ex.\ref{EXpointsplit} &  Ex.\ref{EXicvsetaggr} & Ex.\ref{EXicvsetaggr}  &  (this paper)
\\ \hline 
Locally d.m.c.  & \NO & \NO & \NO & \NO  & 
\\
      & Ex.\ref{EXdmcsetdirsum}, \ref{EXdirsumdmcfn} & Ex.\ref{EXpointsplit} &  Ex.\ref{EXicvsetaggr} & Ex.\ref{EXicvsetaggr}  &  (this paper)
\\ \hline 
Jump \Mnat-convex  & \YES & \YES & \YES & \YES   
  &  \cite{Mmnatjump19}  
\\
      &  &  &  &   &  
\\ \hline 
Jump M-convex   & \YES & \YES & \YES & \YES   
  &  \cite{KMT07jump}  
 \\ 
      &  &  &  &   &  
\\ \hline 
\multicolumn{6}{l}{``Y'' means ``Yes, this function class is closed under this operation.''} 
\\
\multicolumn{6}{l}{``\textbf{\textit{N}}'' means ``No, this function class is not closed under this operation.''} 
\end{tabular}
\end{center}
\end{table}

This paper is organized as follows.
Section~\ref{SCfnclass} is a brief summary of the definitions of discrete convex
sets and functions,
including new observations
(Theorems \ref{THmmsetpolydes} and \ref{THmmfnargmin}, 
Example \ref{EXjumpMfnargmindim2}).
Section~\ref{SCsetope} treats operations on discrete convex sets
such as direct sum, splitting, aggregation, and network transformation.
Section~\ref{SCfnope} treats the corresponding operations on discrete convex functions.
Section~\ref{SCproof} gives the proofs.


\section{Definitions of Discrete Convex Sets and Functions}
\label{SCfnclass}

In this section we provide a minimum account of 
definitions of discrete convex sets
$S \subseteq \ZZ\sp{n}$
and functions 
$f: \ZZ\sp{n} \to \RR \cup \{ +\infty \}$.
Let $N = \{ 1,2, \ldots, n \}$.

For $i \in \{ 1,2, \ldots, n \}$,
the $i$th unit vector is denoted by $\unitvec{i}$.
We define $\unitvec{0}=\veczero$
where
$\veczero =(0,0,\ldots,0)$. 
We also define $\vecone=(1,1,\ldots,1)$.

For a vector $x=(x_{1},x_{2}, \ldots, x_{n})$
 and a subset $A \subseteq \{ 1,2, \ldots, n \}$,
$x(A)$ denotes the component sum within $A$,
i.e.,
$x(A) = \sum \{ x_{i} \mid i \in A \}$.
The {\em positive} and {\em negative supports} of 
$x=(x_{1},x_{2}, \ldots, x_{n})$
are defined as
\begin{equation} \label{vecsupportdef}
 \suppp(x) = \{ i \mid x_{i} > 0 \},
\qquad 
 \suppm(x) = \{ i \mid x_{i} < 0 \}.
\end{equation}

The {\em indicator function} of a set 
$S \subseteq \ZZ\sp{n}$
is the function 
$\delta_{S}: \ZZ\sp{n} \to \{ 0, +\infty \}$
defined by
\begin{equation}  \label{indicatordef}
\delta_{S}(x)  =
   \left\{  \begin{array}{ll}
    0            &   (x \in S) ,      \\
   + \infty      &   (x \not\in S) . \\
                      \end{array}  \right.
\end{equation}
The convex hull of a set $S$ is denoted by $\overline{S}$.
The {\em effective domain} of a function $f$ means the set of $x$
with $f(x) <  +\infty$ and is denoted 
by $\dom f =   \{ x \in \ZZ\sp{n} \mid  f(x) < +\infty \}$.
We always assume that $\dom f$ is nonempty.

\subsection{Separable convexity}
\label{SCseparconvex}

For integer vectors 
$a \in (\ZZ \cup \{ -\infty \})\sp{n}$ and 
$b \in (\ZZ \cup \{ +\infty \})\sp{n}$ 
with $a \leq b$,
$[a,b]_{\ZZ}$ denotes the integer box  
(discrete rectangle, integer interval)
between $a$ and $b$.
A function
$f: \ZZ^{n} \to \RR \cup \{ +\infty \}$
in $x=(x_{1}, x_{2}, \ldots,x_{n}) \in \ZZ^{n}$
is called  {\em separable convex}
if it can be represented as
\begin{equation}  \label{sepfndef}
f(x) = \varphi_{1}(x_{1}) + \varphi_{2}(x_{2}) + \cdots + \varphi_{n}(x_{n})
\end{equation}
with univariate discrete convex functions
$\varphi_{i}: \ZZ \to \RR \cup \{ +\infty \}$, which, by definition, satisfy 
\begin{equation}  \label{univarconvdef}
\varphi_{i}(t-1) + \varphi_{i}(t+1) \geq 2 \varphi_{i}(t)
\qquad (t \in \ZZ).
\end{equation}

\subsection{Integral convexity}

For $x \in \RR^{n}$ the integral neighborhood of $x$ is defined
in \cite{FT90} as 
\begin{equation}  \label{intneighbordef}
N(x) = \{ z \in \ZZ^{n} \mid | x_{i} - z_{i} | < 1 \ (i=1,2,\ldots,n)  \}.
\end{equation}
It is noted that 
strict inequality ``$<$'' is used in this definition
and hence $N(x)$ admits an alternative expression
\begin{equation}  \label{intneighbordeffloorceil}
N(x) = \{ z \in \ZZ\sp{n} \mid
\lfloor x_{i} \rfloor \leq  z_{i} \leq \lceil x_{i} \rceil  \ \ (i=1,2,\ldots, n) \} ,
\end{equation}
where, for $t \in \RR$ in general, 
$\left\lceil  t   \right\rceil$ 
denotes the smallest integer not smaller than $t$
(rounding-up to the nearest integer)
and $\left\lfloor  t  \right\rfloor$
the largest integer not larger than $t$
(rounding-down to the nearest integer).
For a set $S \subseteq \ZZ^{n}$
and $x \in \RR^{n}$
we call the convex hull of $S \cap N(x)$ 
the {\em local convex hull} of $S$ at $x$.
A nonempty set $S \subseteq \ZZ^{n}$ is said to be 
{\em integrally convex} if
the union of the local convex hulls $\overline{S \cap N(x)}$ over $x \in \RR^{n}$ 
is convex \cite{Mdcasiam}.
This is equivalent to saying that,
for any $x \in \RR^{n}$, 
$x \in \overline{S} $ implies $x \in  \overline{S \cap N(x)}$.

It is recognized only recently that the concept of integrally convex sets is 
closely related (or essentially equivalent) to 
the concept of box-integer polyhedra.
Recall from \cite[Section~5.15]{Sch03} 
that a polyhedron $P \subseteq \RR\sp{n}$ is called 
{\em box-integer}
if $P \cap \{ x \in \RR\sp{n} \mid a  \leq x \leq b \}$
is an integer polyhedron for each choice of integer vectors $a$ and $b$.
Then it is easy to see that 
if a set
$S \subseteq \ZZ^{n}$
is integrally convex, then its convex hull
$\overline{S}$ is a box-integer polyhedron, and conversely,
if $P$ is a box-integer polyhedron, then 
$S = P \cap \ZZ\sp{n}$ is an integrally convex set.

For a function
$f: \ZZ^{n} \to \RR \cup \{ +\infty  \}$
the {\em local convex extension} 
$\tilde{f}: \RR^{n} \to \RR \cup \{ +\infty \}$
of $f$ is defined 
as the union of all convex envelopes of $f$ on $N(x)$.  That is,
\begin{equation} \label{fnconvclosureloc2}
 \tilde f(x) = 
  \min\{ \sum_{y \in N(x)} \lambda_{y} f(y) \mid
      \sum_{y \in N(x)} \lambda_{y} y = x,  \ 
  (\lambda_{y})  \in \Lambda(x) \}
\quad (x \in \RR^{n}) ,
\end{equation} 
where $\Lambda(x)$ denotes the set of coefficients for convex combinations indexed by $N(x)$:
\[ 
  \Lambda(x) = \{ (\lambda_{y} \mid y \in N(x) ) \mid 
      \sum_{y \in N(x)} \lambda_{y} = 1, 
      \lambda_{y} \geq 0 \ \ \mbox{for all } \   y \in N(x)  \} .
\] 
If $\tilde f$ is convex on $\RR^{n}$,
then $f$ is said to be {\em integrally convex}
\cite{FT90}.
The effective domain of an integrally convex function is an integrally convex set.
A set $S \subseteq \ZZ\sp{n}$ is integrally convex if and only if its indicator function
$\delta_{S}: \ZZ\sp{n} \to \{ 0, +\infty \}$
is an integrally convex function.

Integral convexity of a function can be characterized as follows.

\begin{theorem}[{\cite[Theorem A.1]{MMTT20dmc}}]
\label{THmmtt19ThA1}
A function $f: \ZZ^{n} \to \RR \cup \{ +\infty  \}$
with $\dom f \not= \emptyset$
is integrally convex
if and only if, 
for every $x, y \in \ZZ\sp{n}$  we have \ 
\begin{equation*} 
\tilde{f}\, \bigg(\frac{x + y}{2} \bigg) 
\leq \frac{1}{2} (f(x) + f(y)),
\end{equation*}
where $\tilde{f}$ is the local convex extension of $f$ 
defined by \eqref{fnconvclosureloc2}.
\end{theorem}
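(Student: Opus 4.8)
The plan is to prove the two implications separately, using throughout two elementary features of the local convex extension. First, $\tilde{f}(z) = f(z)$ for every $z \in \ZZ^{n}$, since $N(z) = \{ z \}$ when $z$ is integral, so the only admissible convex combination in \eqref{fnconvclosureloc2} is trivial. Second, $\tilde{f}$ is continuous and piecewise linear on the convex hull of $\dom f$, so that establishing its convexity on $\RR^{n}$ reduces to verifying midpoint convexity. The direction ``integrally convex $\Rightarrow$ inequality'' is then immediate: by definition $f$ is integrally convex exactly when $\tilde{f}$ is convex, and applying midpoint convexity of $\tilde{f}$ to integral points $x, y \in \ZZ^{n}$ gives
\[
\tilde{f}\Bigl(\frac{x+y}{2}\Bigr) \le \frac{1}{2}\bigl(\tilde{f}(x) + \tilde{f}(y)\bigr) = \frac{1}{2}\bigl(f(x)+f(y)\bigr),
\]
which is the asserted inequality.

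For the converse I would aim to show that $\tilde{f}$ is midpoint convex at arbitrary real points $p, q \in \RR^{n}$. Expand $\tilde{f}(p)$ and $\tilde{f}(q)$ by convex combinations attaining the minima in \eqref{fnconvclosureloc2}, say $p = \sum_{i} \lambda_{i} x^{i}$ with $\tilde{f}(p) = \sum_{i} \lambda_{i} f(x^{i})$ and $x^{i} \in N(p) \cap \ZZ^{n}$, and likewise $q = \sum_{j} \mu_{j} y^{j}$ with $y^{j} \in N(q) \cap \ZZ^{n}$. Since $\sum_{i}\lambda_{i} = \sum_{j}\mu_{j} = 1$, the midpoint decomposes as $\frac{p+q}{2} = \sum_{i,j} \lambda_{i}\mu_{j}\,\frac{x^{i}+y^{j}}{2}$, and the hypothesis supplies for each pair the integral inequality $\tilde{f}\bigl(\frac{x^{i}+y^{j}}{2}\bigr) \le \frac{1}{2}\bigl(f(x^{i}) + f(y^{j})\bigr)$. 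If one could interchange $\tilde{f}$ with this convex combination, i.e.\ establish
\[
\tilde{f}\Bigl(\frac{p+q}{2}\Bigr) \le \sum_{i,j} \lambda_{i}\mu_{j}\, \tilde{f}\Bigl(\frac{x^{i}+y^{j}}{2}\Bigr),
\]
then combining with the integral inequalities would give $\tilde{f}\bigl(\frac{p+q}{2}\bigr) \le \sum_{i,j}\lambda_{i}\mu_{j}\cdot\frac{1}{2}\bigl(f(x^{i})+f(y^{j})\bigr) = \frac{1}{2}\bigl(\tilde{f}(p)+\tilde{f}(q)\bigr)$, yielding midpoint convexity and hence convexity of $\tilde{f}$.

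The main obstacle is exactly this interchange: the displayed inequality is itself a Jensen-type statement for $\tilde{f}$, i.e.\ the very convexity we are trying to prove, so the argument as written is circular. The difficulty is genuine, not a mere bookkeeping issue, because the midpoint $\frac{p+q}{2}$ and the half-integral points $\frac{x^{i}+y^{j}}{2}$ need not lie in a common unit cell, so $\tilde{f}$ is evaluated at them through different integral neighborhoods $N(\cdot)$. Breaking the circularity requires a combinatorial analysis of how the neighborhoods behave under averaging, which is precisely the Favati--Tardella local characterization of integral convexity \cite{FT90}: it suffices to verify the inequality of the theorem for integral $x, y$ with $\| x - y \|_{\infty} \le 2$. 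Since the global hypothesis of the theorem contains this local case as a special instance, I would structure the converse so that it reduces cleanly to this known local characterization, after first checking the auxiliary point that the hypothesis already forces $\dom f$ to be an integrally convex set (so that $\dom \tilde{f}$ is convex and the midpoint-convexity-to-convexity reduction is legitimate).
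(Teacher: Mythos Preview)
The paper does not give its own proof of this theorem; it is quoted verbatim as \cite[Theorem~A.1]{MMTT20dmc} and used as a black box (in Section~\ref{SCicvfnsplitprf}). So there is no ``paper's proof'' to compare against.

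As for the proposal itself: the ``only if'' direction is fine. For the ``if'' direction you correctly diagnose that the naive Jensen interchange is circular, and your fallback---invoke the Favati--Tardella local characterization \cite{FT90}, since the global hypothesis trivially contains the local one with $\|x-y\|_{\infty}\le 2$---is a legitimate route. Two cautions. First, the classical Favati--Tardella statement assumes $\dom f$ is integrally convex; you note this but do not actually verify it, and the verification is not entirely free (one must argue that the hypothesis applied to pairs $x,y\in\dom f$ with $\|x-y\|_{\infty}\le 2$ forces $(x+y)/2\in\overline{\dom f\cap N((x+y)/2)}$, which is the set-level local characterization). Second, once you invoke \cite{FT90} as a black box, the present theorem becomes an immediate corollary and your first two paragraphs of attempted direct argument are superfluous. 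The actual proof in \cite{MMTT20dmc} is self-contained and does the combinatorial cell-by-cell analysis rather than citing \cite{FT90}; your proposal is shorter precisely because it offloads that work.
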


The reader is referred to 
\cite{MM19projcnvl,MMTT19proxIC,MT19subgrIC}
for recent developments
in the theory of integral convexity.

\subsection{L-convexity and discrete midpoint convexity}

\subsubsection{L-convex sets and functions}

A nonempty set $S \subseteq  \ZZ\sp{n}$ is called {\em \Lnat-convex} if
\begin{equation} \label{midptcnvset}
 x, y \in S
\ \Longrightarrow \
\left\lceil \frac{x+y}{2} \right\rceil ,
\left\lfloor \frac{x+y}{2} \right\rfloor  \in S ,
\end{equation}
where
$\left\lceil  z   \right\rceil
 = (\left\lceil  z_{1}   \right\rceil ,
   \left\lceil  z_{2}   \right\rceil ,
     \ldots, 
   \left\lceil  z_{n}   \right\rceil )$ 
and
$\left\lfloor  z   \right\rfloor
 = (\left\lfloor  z_{1}   \right\rfloor ,
   \left\lfloor  z_{2}   \right\rfloor ,
     \ldots, 
   \left\lfloor  z_{n}   \right\rfloor )$
for $z =(z_{1}, z_{2}, \ldots, z_{n}) \in \RR\sp{n}$. 
The property \eqref{midptcnvset} is called {\em discrete midpoint convexity}.

A function $f : \ZZ\sp{n} \to \RR \cup \{ +\infty \}$ with $\dom f \not= \emptyset$
is said to be {\em \Lnat-convex}
if it satisfies a quantitative version of discrete midpoint convexity,
i.e., if 
\begin{equation} \label{midptcnvfn}
 f(x) + f(y) \geq
   f \left(\left\lceil \frac{x+y}{2} \right\rceil\right) 
  + f \left(\left\lfloor \frac{x+y}{2} \right\rfloor\right) 
\end{equation}
holds for all $x, y \in \ZZ\sp{n}$.
The effective domain of an \Lnat-convex function is an \Lnat-convex set.
A set $S$ is \Lnat-convex if and only if its indicator function
$\delta_{S}$ is an \Lnat-convex function.
It is known \cite[Section 7.1]{Mdcasiam} that
\Lnat-convex functions can be characterized 
by several different conditions.

For example,
$f(x_{1}, x_{2},x_{3} ) = \max \{ x_{1}, x_{2}, x_{3} \}$
is an \Lnat-convex function.
Another function
$f(x_{1}, x_{2},x_{3} ) 
=  {x_{1}}\sp{2} + | x_{1} - x_{2} |  + ( x_{2} - x_{3} )\sp{2}$
is also \Lnat-convex.
More generally
\cite[Section~7.3]{Mdcasiam}, 
\begin{equation} \label{lfn2diffsepar}
 f(x)  =    \sum_{i = 1}\sp{n} \varphi_{i}(x_{i})
         + \sum_{i \not= j} \varphi_{ij}(x_{i}-x_{j}) 
\end{equation}
with univariate convex functions
$\varphi_{i}$ $(i = 1,2, \ldots, n)$
and $\varphi_{ij}$ $(i,j= 1,2, \ldots, n; i \not= j)$
is \Lnat-convex. 
A function of the form of \eqref{lfn2diffsepar}
is sometimes called a {\em 2-separable diff-convex function}.

A function 
$f(x_{1},x_{2}, \ldots, x_{n} )$
is said to be submodular
if
\begin{equation} \label{submfn}
f(x) + f(y) \geq f(x \vee y) + f(x \wedge y)
\end{equation}
holds for all $x, y \in \ZZ\sp{n}$, 
where
$x \vee y$ and $x \wedge y$ denote,
respectively, the vectors of componentwise maximum and minimum of $x$ and $y$, 
i.e.,
\begin{equation} \label{veewedgedef}
  (x \vee y)_{i} = \max(x_{i}, y_{i}),
\quad
  (x \wedge y)_{i} = \min(x_{i}, y_{i})
\qquad (i =1,2,\ldots, n).
\end{equation}

A function 
$f(x_{1},x_{2}, \ldots, x_{n} )$
with $\dom f \not= \emptyset$ is called {\em L-convex}
if it is submodular
 and there exists
$r \in \RR$ such that 
\begin{equation}\label{shiftlfnZ}
f(x + \vecone) = f(x) +  r
\end{equation}
for all $x  \in \ZZ\sp{n}$.
If $f$ is {\rm L}-convex,
the function
$g(x_{2}, \ldots, x_{n} ) := f(0, x_{2}, \ldots, x_{n} )$
is an \Lnat-convex function, and 
every \Lnat-convex function 
arises in this way.
For example,
$f(x_{1}, x_{2},x_{3} ) = \max \{ x_{1}, x_{2}, x_{3} \}$,
mentioned above as an \Lnat-convex function,
is actually L-convex.
A 2-separable diff-convex function in \eqref{lfn2diffsepar} is L-convex if 
$\varphi_{i}=0$ for $i = 1,2, \ldots, n$.

A nonempty set $S$ is called {\em L-convex} if its indicator function $\delta_{S}$
is an L-convex function.
The effective domain of an L-convex function is an L-convex set.

\subsubsection{Discrete midpoint convex sets and functions}

A nonempty set $S \subseteq  \ZZ\sp{n}$ is said to be {\em discrete midpoint convex} 
\cite{MMTT20dmc} if
\begin{equation} \label{dirintcnvsetdef}
 x, y \in S, \ \| x - y \|_{\infty} \geq 2
\ \Longrightarrow \
\left\lceil \frac{x+y}{2} \right\rceil ,
\left\lfloor \frac{x+y}{2} \right\rfloor  \in S.
\end{equation}
This condition is  weaker than the defining condition (\ref{midptcnvset})
for an \Lnat-convex set,
and hence every \Lnat-convex set
is a discrete midpoint convex set.

A function $f: \ZZ\sp{n} \to \RR \cup \{ +\infty \}$
with $\dom f \not= \emptyset$
is called {\em  globally discrete midpoint convex} if
the discrete midpoint convexity 
\eqref{midptcnvfn}
is satisfied by every pair $(x, y) \in \ZZ\sp{n} \times \ZZ\sp{n}$
with $\| x - y \|_{\infty} \geq 2$.
The effective domain of a
globally discrete midpoint convex function
is necessarily a discrete midpoint convex set.
A function $f: \ZZ\sp{n} \to \RR \cup \{ +\infty \}$
with $\dom f \not= \emptyset$
is called {\em  locally discrete midpoint convex}
if $\dom f$ is a discrete midpoint convex set
and the discrete midpoint convexity (\ref{midptcnvfn})
is satisfied by every pair $(x, y) \in \ZZ\sp{n} \times \ZZ\sp{n}$
with $\| x - y \|_{\infty} = 2$ (exactly equal to $2$).
Obviously, 
every \Lnat-convex function is
globally discrete midpoint convex, and 
every globally discrete midpoint convex function is
locally discrete midpoint convex.
We sometimes abbreviate ``discrete midpoint convex(ity)'' to ``d.m.c.''

The inclusion relations for sets and functions 
equipped with (variants of) L-convexity are summarized as follows:
\begin{align*}
&
\{ \mbox{\rm {\rm L}-convex sets} \} \subsetneqq \ 
\{ \mbox{\rm \Lnat-convex sets } \} \subsetneqq \ 
\{ \mbox{\rm discrete midpoint convex sets} \},
\\ &
\{ \mbox{\rm {\rm L}-convex fns} \} \subsetneqq \ 
\{ \mbox{\rm \Lnat-convex fns} \} \subsetneqq \ 
\{ \mbox{\rm globally d.m.c. fns} \} \subsetneqq \ 
\{ \mbox{\rm locally d.m.c. fns} \}.
\end{align*}

\subsection{M-convexity and jump M-convexity}

\subsubsection{M-convex sets and functions}
\label{SCmnatfn}

A nonempty set $S \subseteq \ZZ\sp{n}$ 
is called an {\em \Mnat-convex set}
if it satisfies the following exchange property:
\begin{description}
\item[\Bnvexb] 
For any $x, y \in S$ and $i \in \suppp(x-y)$, we have
(i)
$x -\unitvec{i} \in S$ and $y+\unitvec{i} \in S$
\ or \  
\\
(ii) there exists some $j \in \suppm(x-y)$ such that
$x-\unitvec{i}+\unitvec{j}  \in S$ and $y+\unitvec{i}-\unitvec{j} \in S$.
\end{description}
\Mnat-convex set is an alias for the set of integer points in an integral generalized polymatroid.
In particular, the family of independent sets of a matroid
can be regarded as an \Mnat-convex set consisting of $\{ 0, 1\}$-vectors.

A function
$f: \ZZ\sp{n} \to \RR \cup \{ +\infty \}$
with $\dom f \not= \emptyset$
is called {\em \Mnat-convex}, if,
for any $x, y \in \dom f$ and $i \in \suppp(x-y)$, 
we have (i)
\begin{equation}  \label{mconvex1Z}
f(x) + f(y)  \geq  f(x -\unitvec{i}) + f(y+\unitvec{i})
\end{equation}
or (ii) there exists some $j \in \suppm(x-y)$ such that
\begin{equation}  \label{mconvex2Z}
f(x) + f(y)   \geq 
 f(x-\unitvec{i}+\unitvec{j}) + f(y+\unitvec{i}-\unitvec{j}) .
\end{equation}
This property is referred to as the {\em exchange property}.
A more compact expression of this exchange property is as follows:
\begin{description}
\item[\Mnvexb]
 For any $x, y \in \dom f$ and $i \in \suppp(x-y)$, we have
\begin{equation} \label{mnconvexc2Z}
f(x) + f(y)   \geq 
\min_{j \in \suppm(x - y) \cup \{ 0 \}} 
 \{ f(x - \unitvec{i} + \unitvec{j}) + f(y + \unitvec{i} - \unitvec{j}) \},
\end{equation}
\end{description}
where $\unitvec{0}=\veczero$ (zero vector).

For example,
$f(x_{1}, x_{2},x_{3} ) = | x_{1} + x_{2} + x_{3} | 
+ ( x_{1} + x_{2} )\sp{2} + {x_{3}}\sp{2}$
is an \Mnat-convex function.
More generally
\cite[Section~6.3]{Mdcasiam}, 
a laminar convex function is \Mnat-convex,
where a function $f$ is called {\em laminar convex}
if it can be represented as
\begin{equation} \label{mnatfnlaminar}
 f(x)  =  \sum_{A \in \mathcal{T}} \varphi_{A}(x(A))
\end{equation}
for a laminar family $\mathcal{T} \subseteq 2\sp{N}$
(i.e., $A \cap B = \emptyset$, $ A \subseteq B$, or $ A \supseteq B$
for any $A, B \in \mathcal{T}$)
and a family of 
univariate discrete convex functions 
$\varphi_{A}: \ZZ \to \RR \cup \{ +\infty \}$ 
indexed by $A \in \mathcal{T}$.

\Mnat-convex functions can be characterized by 
a number of different exchange properties including a local exchange property
under the assumption that 
function $f$ is (effectively) defined on an \Mnat-convex set.
See \cite{MS18mnataxiom} 
as well as \cite[Theorem~4.2]{Mdcaeco16} and \cite[Theorem~6.8]{ST15jorsj}.

If a set $S \subseteq \ZZ\sp{n}$ lies on a hyperplane with a constant component sum
(i.e., $x(N) = y(N)$ for all  $x, y \in S$),
the exchange property \Bnvex
takes a simpler form
(without the possibility of the first case (i)): 
\begin{description}
\item[\Bvexb] 
For any $x, y \in S$ and $i \in \suppp(x-y)$, 
there exists some $j \in \suppm(x-y)$ such that
$x-\unitvec{i}+\unitvec{j}  \in S$ and $y+\unitvec{i}-\unitvec{j} \in S$.
\end{description}
A nonempty set $S \subseteq \ZZ\sp{n}$ having this exchange property 
is called an {\em M-convex set},
which is an alias for the set of integer points in an integral base polyhedron.  
In particular, the basis family of a matroid
can be identified precisely with an M-convex set consisting of $\{ 0, 1\}$-vectors.

An \Mnat-convex function 
whose effective domain is an M-convex set
is called an {\em M-convex function}
\cite{Mstein96,Mdca98,Mdcasiam}.
In other words,  a function 
$f: \ZZ\sp{n} \to \RR \cup \{ +\infty \}$
with $\dom f \not= \emptyset$
is M-convex
if and only if it satisfies
the exchange property:
\begin{description}
\item[\Mvexb]
 For any $x, y \in \dom f$  and $i \in \suppp(x-y)$, 
there exists
$j \in \suppm(x-y)$ such that
\eqref{mconvex2Z} holds.
\end{description}
M-convex functions can be characterized by a local exchange property
under the assumption 
that function $f$ is (effectively) defined on an M-convex set.
See \cite[Section 6.2]{Mdcasiam}.

M-convex functions and
\Mnat-convex functions are equivalent concepts,
in that \Mnat-convex functions in $n$ variables 
can be obtained as projections of M-convex functions in $n+1$ variables.
More formally, 
a function $f : \ZZ\sp{n} \to \RR \cup \{ +\infty \}$
is \Mnat-convex
if and only if  the function 
$\tilde{f} : \ZZ\sp{n+1} \to \RR \cup \{ +\infty \}$ 
defined by
\begin{equation} \label{mfnmnatfnrelationvex}
  \tilde{f}(x_{0},x) = \left\{ \begin{array}{ll}
      f(x)    & \mbox{ if $x_{0} = {-}x(N)$} \\
      +\infty & \mbox{ otherwise}
    \end{array}\right.
 \qquad ( x_{0} \in \ZZ, x \in \ZZ\sp{n})
\end{equation}
is an M-convex function.

\subsubsection{Jump systems and jump M-convex functions}
\label{SCjumpMdef}

Let $x$ and $y$ be integer vectors.
The smallest integer box containing $x$ and $y$
is given by $[x \wedge y,x \vee y]_{\ZZ}$.
A vector $s \in \ZZ\sp{n}$ is called an {\em $(x,y)$-increment} if
$s= \unitvec{i}$ or $s= -\unitvec{i}$ for some $i \in N$ and 
$x+s \in [x \wedge y,x \vee y]_{\ZZ}$.

A nonempty set $S \subseteq \ZZ\sp{n}$ is said 
to be a {\em jump system} 
\cite{BouC95}
if satisfies an exchange axiom,
called the {\em 2-step axiom}:
\begin{description}
\item[(2-step axiom)]
For any  $x, y \in S$
and any $(x,y)$-increment $s$ with $x+s \not\in S$,
there exists an $(x+s,y)$-increment $t$ such that $x+s+t \in S$.
\end{description}
Note that we have the possibility of $s = t$ in the 2-step axiom.

A set $S \subseteq \ZZ\sp{n}$ is called a {\em constant-sum system}
if $x(N)=y(N)$ for any $x,y \in S$.
A constant-sum jump system is nothing but an M-convex set.

A set $S \subseteq \ZZ\sp{n}$ is called a {\em constant-parity system}
if $x(N)-y(N)$ is even for any $x,y \in S$.
It is known \cite{Mmjump06} that a 
{\em constant-parity jump system}
(or {\em c.p.~jump system})
is characterized  by
\begin{description}
\item[(J-EXC)]
For any  $x, y \in S$ and any $(x,y)$-increment $s$,
there exists an $(x+s,y)$-increment $t$ such that
$x+s+t \in S$ and $y-s-t \in S$.
\end{description}

A function  $f: \ZZ\sp{n} \to \RR \cup \{ +\infty \}$
with $\dom f \not= \emptyset$
is called%
\footnote{
This concept (``jump M-convex function'')  is the same as ``M-convex function on a jump system"
in \cite{KMT07jump,Mmjump06}.
} 
{\em jump M-convex} 
if it satisfies
the following exchange axiom:
\begin{description}
\item[(JM-EXC)]
For any  $x, y \in \dom f$
and any $(x,y)$-increment $s$,
 there exists an $(x+s,y)$-increment $t$ such that
$x+s+t \in \dom f$,  \  $y-s-t \in \dom f$, and
\begin{equation}  \label{jumpmexc2}
 f(x)+f(y) \geq  f(x+s+t)  + f(y-s-t) .
\end{equation} 
\end{description}
\noindent
The effective domain of a jump M-convex function is 
a constant-parity jump system.

A jump system 
is called a {\em simultaneous exchange jump system}
(or {\em s.e.~jump system}) \cite{Mmnatjump19}
if it satisfies
the following exchange axiom
\begin{description}
\item[(J$\sp{\natural}$-EXC)]
For any  $x, y \in S$ and any $(x,y)$-increment $s$, we have
(i) $x+s \in S$ and $y-s \in S$, or 
(ii)
there exists an $(x+s,y)$-increment $t$ such that
$x+s+t \in S$ and $y-s-t \in S$.
\end{description}
\noindent
Every constant-parity jump system
is a simultaneous exchange jump system,
since the condition (J-EXC) implies (J$\sp{\natural}$-EXC).
Not every jump system is a simultaneous exchange jump system,
as is shown in \cite[Examples 2.2 and 2.3]{Msurvop19}.

A function  $f: \ZZ\sp{n} \to \RR \cup \{ +\infty \}$
with $\dom f \not= \emptyset$
is called 
{\em jump \Mnat-convex}
\cite{Mmnatjump19}
if it satisfies
the following exchange axiom
\begin{description}
\item[(J\Mnat-EXC)]
For any  $x, y \in \dom f$
and any $(x,y)$-increment $s$, we have
\\
(i) $x+s \in \dom f$, \  $y-s \in \dom f$, and 
\begin{equation}  \label{jumpmexc1}
 f(x)+f(y) \geq  f(x+s)  + f(y-s) , 
\end{equation}
or (ii) there exists an $(x+s,y)$-increment $t$ such that
$x+s+t \in \dom f$,  \  $y-s-t \in \dom f$, and
\eqref{jumpmexc2} holds.
\end{description}

The condition 
(J\Mnat-EXC) is weaker than (JM-EXC), and hence
every jump M-convex function is a jump \Mnat-convex function.
However, the concepts of jump M-convexity and
jump \Mnat-convexity are in fact equivalent to each other in the sense that 
jump \Mnat-convex functions in $n$ variables 
can be identified with jump M-convex functions in $n+1$ variables.
More specifically, for any integer vector $x \in \ZZ\sp{n}$
we define $\pi(x)=0$ if
its component sum
$x(N)$ is even, and
$\pi(x)=1$ if $x(N)$ is odd.
It is known \cite{Mmnatjump19} that 
a function $f : \ZZ\sp{n} \to \RR \cup \{ +\infty \}$ is 
jump \Mnat-convex
if and only if  the function 
$\tilde{f} : \ZZ\sp{n+1} \to \RR \cup \{ +\infty \}$ 
defined by
\begin{equation}  \label{ftildeJ}
\tilde f(x_{0}, x)= 
   \left\{  \begin{array}{ll}
   f(x)            &   (x_{0} = \pi(x))       \\
   + \infty      &   (\mbox{\rm otherwise})  \\
                      \end{array}  \right.
 \qquad ( x_{0} \in \ZZ, x \in \ZZ\sp{n})
\end{equation}
is a jump M-convex function.

The inclusion relations for sets and functions 
equipped with (variants of) M-convexity 
is summarized as follows:
\begin{align*}
& 
\{ \mbox{\rm M-convex sets} \}  \subsetneqq \
  \left\{  \begin{array}{l}  \{ \mbox{\rm \Mnat-convex  sets} \}
                          \\ \{ \mbox{\rm c.p. jump systems} \} \end{array}  \right \}
\subsetneqq \   \{ \mbox{\rm s.e. jump systems} \} 
\subsetneqq \   \{ \mbox{\rm jump systems} \} ,
\\
&
\{ \mbox{\rm M-convex fns} \}  \subsetneqq \
  \left\{  \begin{array}{l}  \{ \mbox{\rm \Mnat-convex fns} \}
                          \\ \{ \mbox{\rm jump M-convex fns} \} \end{array}  \right \}
\subsetneqq \  
\{ \mbox{\rm jump \Mnat-convex fns} \} .
\end{align*}
It is noted that no convexity class is introduced for functions defined on general jump systems.

Finally we mention an example to show that a jump M-convex function
may not look like a convex function in the intuitive sense.
Nevertheless, jump M- and \Mnat-convex functions find applications in several fields including
matching theory \cite{BK12,KST12cunconj,KT09evenf,Tak14fores}
and algebra \cite{Bra10halfplane}.

\begin{example}  \rm  \label{EXjumpMfndim2A}
Let $S$ be a subset of $\ZZ\sp{2}$ 
defined by
\[
S = \{ (x_1, x_2) \in \ZZ\sp{2} 
\mid 
 0 \leq x_1 \leq 3, \ 0 \leq x_2 \leq 3, \
 \mbox{$x_1 + x_2$: even}
  \}.
\]
This set is a constant-parity jump system.
Consider 
$f: S \to \RR$ 
defined by
\begin{equation}  \label{jumpMfndim2A}
 f(x_1,x_2) = \begin{cases}
 0 &  (\  x_1, x_2 \in \{ 0,2 \} \ ),
\\
 1 &  (\  x_1, x_2 \in \{ 1,3 \} \ ),
               \end{cases}
\end{equation}
which may be shown as
\[
f(x_{1},x_{2}) = 
\begin{array}{|rrrr|}
 \hline 
 -  & 1 & - & 1  
\\
   0 & - & 0 & - 
\\
 -  & 1 & - & 1  
\\
   0 & - & 0 & - 
\\ \hline 
\end{array}  \ .
\]
This function is jump M-convex.
Indeed, for 
$x=(0,0)$, $y=(2,2)$, and $s=(1,0)$, for example,
we can take $t=(1,0)$, for which  
$x+s+t=(2,0)$, $y-s-t=(0,2)$, and
$f(x)+f(y) = 0 + 0 = f(x+s+t)  + f(y-s-t)$ in \eqref{jumpmexc2}.
For 
$x=(0,0)$, $y=(3,3)$, and $s=(1,0)$, 
we can choose $t=(1,0)$ or $t=(0,1)$.
For either choice we have
$f(x+s+t)  + f(y-s-t) = 1 = f(x)+f(y)$.

It is noted that the function $f$ above arises from
the degree sequences of a graph as in \cite[Example 2.2]{Mmjump06}.
Let $G=(V,E)$ be an undirected graph with vertex set
$V= \{ v_{1}, v_{2} \}$ and
edge set $E$ consisting of three edges,
$E= \{ (v_{1}, v_{2}),  (v_{1}, v_{1}),  (v_{2}, v_{2})  \}$,
where $(v_{i}, v_{i})$ denotes a self-loop at $v_{i}$ for $i=1,2$.
The set $S$ above is the degree system (the set of the degree sequences of a subgraph)
of this graph $G$, and $f(x)$
coincides with the minimum weight of a subgraph with degree sequence $x$
when the $(v_{1}, v_{2})$ has weight 1 and the self-loops have weight 0.
\end{example}

\subsection{Multimodularity}
\label{SCmultimod}

Recall that $\unitvec{i}$ denotes the $i$th unit vector for $i =1,2,\ldots, n$,
and 
$\mathcal{F} \subseteq \ZZ\sp{n}$ 
be the set of vectors defined by
\begin{equation} \label{multimodirection1}
\mathcal{F} = \{ -\unitvec{1}, \unitvec{1}-\unitvec{2}, \unitvec{2}-\unitvec{3}, \ldots, 
  \unitvec{n-1}-\unitvec{n}, \unitvec{n} \} .
\end{equation}
A finite-valued function $f: \ZZ\sp{n} \to \RR$
is said to be {\em multimodular}
\cite{Haj85}
if it satisfies
\begin{equation} \label{multimodulardef1}
 f(z+d) + f(z+d') \geq   f(z) + f(z+d+d')
\end{equation}
for all 
$z \in \ZZ\sp{n}$ and all distinct $d, d' \in \mathcal{F}$.
It is known \cite[Proposition 2.2]{Haj85} 
that $f: \ZZ\sp{n} \to \RR$
is multimodular if and only if the function 
$\tilde f: \ZZ\sp{n+1} \to \RR$ 
defined by 
\begin{equation} \label{multimodular1}
 \tilde f(x_{0}, x) = f(x_{1}-x_{0},  x_{2}-x_{1}, \ldots, x_{n}-x_{n-1})  
 \qquad ( x_{0} \in \ZZ, x \in \ZZ\sp{n})
\end{equation}
is submodular in $n+1$ variables. 
This characterization enables us to define multimodularity 
for a function that may take the infinite value $+\infty$.
That is, we say
\cite{MM19multm,Mmult05} 
that a function $f: \ZZ\sp{n} \to \RR \cup \{ +\infty \}$ with $\dom f \not= \emptyset$
is multimodular if the function 
$\tilde f: \ZZ\sp{n+1} \to \RR \cup \{ +\infty \}$
associated with $f$ by (\ref{multimodular1}) is submodular.

Multimodularity and \Lnat-convexity
have the following close relationship.

\begin{theorem}[\cite{Mmult05}]  \label{THmmfnlnatfn}
A function $f: \ZZ\sp{n} \to \RR \cup \{ +\infty \}$
is multimodular if and only if the function $g: \ZZ\sp{n} \to \RR \cup \{ +\infty \}$
defined by
\begin{equation} \label{mmfnGbyF}
 g(p) = f(p_{1}, \  p_{2}-p_{1}, \  p_{3}-p_{2}, \ldots, p_{n}-p_{n-1})  
 \qquad ( p \in \ZZ\sp{n})
\end{equation}
is \Lnat-convex.
\end{theorem}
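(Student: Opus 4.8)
The plan is to reduce the claim to the already-established correspondence between submodularity, L-convexity, and \Lnat-convexity by working with the lifted function $\tilde f$ on $\ZZ\sp{n+1}$ attached to $f$ through \eqref{multimodular1}. By the definition of multimodularity adopted here, $f$ is multimodular precisely when $\tilde f(x_{0},x) = f(x_{1}-x_{0}, x_{2}-x_{1}, \ldots, x_{n}-x_{n-1})$ is submodular in the $n+1$ variables $(x_{0},x_{1},\ldots,x_{n})$. The key observation is that $\tilde f$ depends on its arguments only through the consecutive differences $x_{i}-x_{i-1}$, so adding $\vecone$ leaves every such difference unchanged: $\tilde f(x+\vecone)=\tilde f(x)$ for all $x\in\ZZ\sp{n+1}$. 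In the language of \eqref{shiftlfnZ} this is exactly the translation condition with $r=0$, which therefore holds automatically. Consequently $\tilde f$ is submodular if and only if it is L-convex, and hence $f$ is multimodular if and only if $\tilde f$ is L-convex.

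It remains to connect L-convexity of $\tilde f$ with \Lnat-convexity of $g$. First I record the compatibility of the two coordinate changes: setting $x_{0}=0$ in $\tilde f$ gives $\tilde f(0,p) = f(p_{1}, p_{2}-p_{1}, \ldots, p_{n}-p_{n-1}) = g(p)$, so $g$ is exactly the restriction of $\tilde f$ to the hyperplane $\{x_{0}=0\}$. For the forward implication, if $f$ is multimodular then $\tilde f$ is L-convex by the previous paragraph, and by the stated relationship between L-convex and \Lnat-convex functions (fixing the first coordinate of an L-convex function to $0$ yields an \Lnat-convex function) the function $g$ is \Lnat-convex.

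For the converse, suppose $g$ is \Lnat-convex. Since every \Lnat-convex function arises as the $x_{0}=0$ restriction of some L-convex function, there is an L-convex $h$ on $\ZZ\sp{n+1}$ with $h(0,p)=g(p)$. Being L-convex, $h$ satisfies $h(x+\vecone)=h(x)+r$ for some $r\in\RR$, whence $h(x_{0},x)=h(0,x_{1}-x_{0},\ldots,x_{n}-x_{0})+r x_{0}=g(x_{1}-x_{0},\ldots,x_{n}-x_{0})+r x_{0}$. A direct substitution into \eqref{mmfnGbyF} gives $g(x_{1}-x_{0},\ldots,x_{n}-x_{0})=f(x_{1}-x_{0}, x_{2}-x_{1}, \ldots, x_{n}-x_{n-1})=\tilde f(x_{0},x)$, so $\tilde f = h - r x_{0}$. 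Subtracting the modular (linear) term $r x_{0}$ preserves submodularity, so $\tilde f$ is submodular and $f$ is multimodular.

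The routine verifications are the two coordinate-substitution identities; the only genuine point requiring care is the converse, where the L-convex extension $h$ of $g$ may carry a nonzero slope $r$, so that $\tilde f$ coincides with $h$ only up to the linear term $r x_{0}$. I expect this bookkeeping of the translation constant to be the main obstacle. Everything else then follows mechanically from the equivalence between submodularity together with \eqref{shiftlfnZ} and L-convexity, combined with the restriction correspondence between L-convex and \Lnat-convex functions.
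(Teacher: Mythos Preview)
The paper does not actually prove this theorem; it is stated with a citation to \cite{Mmult05} and used as a black box throughout. So there is no in-paper proof to compare against.

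Your argument is correct. The forward direction is clean: $\tilde f$ is automatically translation-invariant with $r=0$, so submodularity of $\tilde f$ is equivalent to L-convexity, and then $g=\tilde f(0,\cdot)$ is \Lnat-convex by the stated L/\Lnat\ correspondence. For the converse you invoke the existence of an L-convex extension $h$ of $g$, compute $h(x_{0},x)=g(x_{1}-x_{0},\ldots,x_{n}-x_{0})+r x_{0}$ from the translation law, identify the first term with $\tilde f(x_{0},x)$ via \eqref{mmfnGbyF}, and conclude that $\tilde f=h-r x_{0}$ is submodular. This is valid. One small simplification: since $\tilde f$ itself is already translation-invariant (with $r=0$) and restricts to $g$ at $x_{0}=0$, you could argue directly that $\tilde f$ is the canonical L-convex lift of $g$; but as the paper only states the existence of \emph{some} L-convex lift, your detour through an arbitrary $h$ and the correction by $r x_{0}$ is the honest route given what is available, and it handles the bookkeeping you flagged correctly.
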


Note that the relation (\ref{mmfnGbyF}) between $f$ and $g$ can be rewritten as
\begin{equation} \label{mmfnFbyG}
 f(x) = g(x_{1}, \  x_{1}+x_{2}, \  x_{1}+x_{2}+x_{3}, 
   \ldots, x_{1}+ \cdots + x_{n})  
 \qquad ( x \in \ZZ\sp{n}) .
\end{equation}
Using a bidiagonal matrix 
$D=(d_{ij} \mid 1 \leq i,j \leq n)$ defined by
\begin{equation} \label{matDdef}
 d_{ii}=1 \quad (i=1,2,\ldots,n),
\qquad
 d_{i+1,i}=-1 \quad (i=1,2,\ldots,n-1),
\end{equation}
we can express (\ref{mmfnGbyF}) and (\ref{mmfnFbyG}) 
more compactly  as $g(p)=f(Dp)$ and $f(x)=g(D\sp{-1}x)$, respectively. 
The matrix $D$ is unimodular, and its inverse $D\sp{-1}$ 
is a lower triangular integer matrix 
whose $(i,j)$ entry is given by
\begin{equation} \label{matDinv}
(D\sp{-1})_{ij}=
 \begin{cases}
 1 & (i \geq j), \\
 0 &  (i < j).
 \end{cases}
\end{equation}
For $n=5$, for example, we have
\begin{equation} \label{matDdim5}
D = 
\left[ \begin{array}{rrrrr}
1 & 0 & 0 & 0 & 0 \\
-1 & 1 & 0 & 0 & 0 \\
0 & -1 & 1 & 0 & 0\\
0 & 0 & -1 & 1 & 0 \\
0 & 0 & 0 & -1 & 1 \\
\end{array}\right],
\qquad
D\sp{-1} = 
\left[ \begin{array}{rrrrr}
1 & 0 & 0 & 0 & 0 \\
1 & 1 & 0 & 0 & 0 \\
1 & 1 & 1 & 0 & 0 \\
1 & 1 & 1 & 1 & 0 \\
1 & 1 & 1 & 1 & 1 \\
\end{array}\right].
\end{equation}

A nonempty set $S$ is called {\em multimodular}
if its indicator function $\delta_{S}$ is multimodular. 
A multimodular set $S$ can be represented as 
$S = \{ x = D p \mid  p \in T \}$
for some \Lnat-convex set $T$,
where $T$ is uniquely determined from $S$ as 
$T = \{ p=D\sp{-1} x \mid  x \in S \}$.
It follows from (\ref{mmfnGbyF}) that
the effective domain of a multimodular function is a multimodular set.

A polyhedral description of a multimodular set is given as follows.
A subset of the index set $N = \{ 1,2,\ldots, n \}$
is said to be {\em consecutive}
if it consists of consecutive numbers, that is,
it is a set of the form 
$\{ k, k+1, \ldots, l -1, l \}$
for some $k \leq l$.

\begin{theorem} \label{THmmsetpolydes}
A set $S \subseteq \ZZ\sp{n}$ is multimodular if and only if
\begin{align*}
 S &= \{ x \in \ZZ\sp{n} \mid  a_{I} \leq x(I) \leq b_{I}
      \ \ (\mbox{\rm $I$: consecutive interval in $N$})   \}
\end{align*}
for some 
$a_{I} \in \ZZ \cup \{ -\infty \}$ and $b_{I} \in \ZZ \cup \{ +\infty \}$
indexed by consecutive intervals $I \subseteq N$.
\end{theorem}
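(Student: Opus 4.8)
The plan is to reduce the statement to the known inequality representation of \Lnat-convex sets via the unimodular change of variables $p = D\sp{-1}x$ that underlies the multimodularity--\Lnat-convexity correspondence.

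First I would invoke the correspondence recorded just after Theorem \ref{THmmfnlnatfn}: a nonempty set $S \subseteq \ZZ\sp{n}$ is multimodular if and only if the set $T = \{ p = D\sp{-1}x \mid x \in S \}$ is \Lnat-convex, with $S = \{ x = Dp \mid p \in T \}$. Since $D$ is unimodular, $x \mapsto D\sp{-1}x$ is a bijection of $\ZZ\sp{n}$ onto itself, so this is an exact translation of the problem into $p$-coordinates. Explicitly, by \eqref{matDinv} the new coordinates are the prefix sums $p_{i} = x_{1} + \cdots + x_{i} = x(\{1,2,\ldots,i\})$.

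Next I would use the inequality (polyhedral) description of \Lnat-convex sets from \cite{Mdcasiam}: a nonempty set $T \subseteq \ZZ\sp{n}$ is \Lnat-convex if and only if it is the set of integer vectors satisfying a system of difference constraints $p_{i} - p_{j} \leq c_{ij}$ together with box constraints $\alpha_{i} \leq p_{i} \leq \beta_{i}$ (equivalently, difference constraints $p_{i} - p_{j} \leq c_{ij}$ for $i, j \in \{0,1,\ldots,n\}$ under the convention $p_{0} = 0$), with integer or $\pm\infty$ right-hand sides. The heart of the argument is then to rewrite each such constraint in the $x$-coordinates. For $i > j \geq 1$ we have $p_{i} - p_{j} = x(\{ j+1, \ldots, i \})$, so $p_{i} - p_{j} \leq c_{ij}$ becomes an upper bound on the sum $x(I)$ over the consecutive interval $I = \{ j+1, \ldots, i \}$; for $i < j$ the same difference equals $-x(\{ i+1, \ldots, j \})$, giving a lower bound on a consecutive interval; and a box constraint on $p_{i}$ bounds $x(\{ 1, \ldots, i\})$, again a consecutive interval (starting at $1$). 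Conversely every consecutive interval $I \subseteq N$ arises as some $\{ j+1, \ldots, i \}$ or $\{ 1, \ldots, i \}$. Collecting the upper and lower bounds attached to each interval $I$ yields exactly the two-sided form $a_{I} \leq x(I) \leq b_{I}$, and running the computation backwards turns any consecutive-interval system into a difference-plus-box system for $T$. This establishes both implications simultaneously.

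The main obstacle is not the coordinate bookkeeping, which is routine once set up, but securing the inequality representation of \Lnat-convex sets in the precise integer form needed---namely that discrete midpoint convexity \eqref{midptcnvset} is equivalent to being the integer points of a difference-plus-box system with integral right-hand sides. I would either cite this directly from \cite{Mdcasiam} or, if a self-contained argument is wanted, derive it by noting that discrete midpoint convexity forces $\overline{T}$ to be an \Lnat-convex polyhedron, whose facets are of difference or box type, and that box-integrality lets the right-hand sides be taken integral. Minor points to handle explicitly are the nonemptiness convention for multimodular sets and the remark that intervals $I$ carrying no constraint simply take $a_{I} = -\infty$ and $b_{I} = +\infty$.
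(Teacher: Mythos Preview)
Your proposal is correct and follows essentially the same approach as the paper: invoke the \Lnat-convex polyhedral description $p_{i} - p_{j} \leq d_{ij}$, $a_{i} \leq p_{i} \leq b_{i}$ from \cite{Mdcasiam}, then substitute the prefix sums $p_{i} = x_{1} + \cdots + x_{i}$ to convert each such inequality into a bound on $x(I)$ for a consecutive interval $I$. The paper's own proof is just a terse two-sentence version of exactly this argument.
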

\begin{proof}
As is well known (\cite[Section 5.5]{Mdcasiam}),
an \Lnat-convex set can be described by a system of inequalities of the form
$p_{i} - p_{j} \leq d_{ij}$ and  $a_{i} \leq p_{i} \leq b_{i}$.
On substituting
$p_{i} =  x_{1}+x_{2}+ \cdots + x_{i}$
$(i=1,2,\ldots,n)$
into these inequalities,
we obtain the claim.
\end{proof}

\subsection{Discrete convexity of functions in terms of the minimizers}

In this section we discuss how discrete convexity of functions
can be characterized in terms of the discrete convexity of the minimizer sets.

For a function 
$f: \ZZ\sp{n} \to \RR \cup \{ +\infty \}$ 
and a vector $c \in \RR\sp{n}$, 
$f[-c]$ will denote the function defined by
\[
  f[-c](x) = f(x) - \sum_{i=1}\sp{n} c_{i} x_{i}
\qquad (x \in \ZZ\sp{n}).
\]
It is often the case that $f$
is equipped with some kind of discrete convexity
if and only if, for every $c \in \RR\sp{n}$,
the set of the minimizers of $f[-c]$, i.e., 
\[
\argmin f[-c] = \{ x \in \ZZ\sp{n} \mid f[-c](x) \leq f[-c](y) \mbox{ for all $y \in \ZZ\sp{n}$} \}
\]
is equipped with the discrete convexity of the same kind.
This implies that the concept of 
discrete convex functions can also be defined from that of
discrete convex sets.

Indeed the following facts are known.

\begin{theorem}  \label{THfnargminG}
Let 
$f: \ZZ\sp{n} \to \RR \cup \{ +\infty \}$ be a function 
that is convex-extensible or has a bounded nonempty effective domain.%
\footnote{
In Part (4) for an L-convex function $f$, 
the boundedness of the effective domain
is to be understood as the boundedness of 
$\dom f$ intersected with a coordinate plane  $\{ x \mid x_{i} = 0 \}$
for some (or any) $i \in N$.
Note that the effective domain of an L-convex function 
has the invariance in the direction of $\vecone$. 
}



\noindent
{\rm (1)} 
$f$ is separable convex
if and only if
$\argmin f[-c]$ is an integer box
for each $c \in \RR\sp{n}$.

\noindent
{\rm (2)} 
$f$ is integrally convex
if and only if
$\argmin f[-c]$ is an integrally convex set
for each $c \in \RR\sp{n}$.

\noindent
{\rm (3)} 
$f$ is \Lnat-convex
if and only if
$\argmin f[-c]$ is an \Lnat-convex set
for each $c \in \RR\sp{n}$.

\noindent
{\rm (4)} 
$f$ is L-convex
if and only if
$\argmin f[-c]$ is an L-convex set
for each $c \in \RR\sp{n}$.

\noindent
{\rm (5)} 
$f$ is \Mnat-convex
if and only if
$\argmin f[-c]$ is an \Mnat-convex set
for each $c \in \RR\sp{n}$.

\noindent
{\rm (6)} 
$f$ is M-convex
if and only if
$\argmin f[-c]$ is an M-convex set
for each $c \in \RR\sp{n}$.
\end{theorem}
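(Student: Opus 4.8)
The plan is to derive both directions of every part from two structural pillars that hold uniformly across the six classes. Write $\mathrm{P}$ for the function class in a given part and $\mathrm{P}'$ for the corresponding set class. The first pillar is \emph{linear invariance}: if $f$ belongs to $\mathrm{P}$, then so does $f[-c]$ for every $c \in \RR\sp{n}$, because each defining inequality (univariate convexity \eqref{univarconvdef}, submodularity \eqref{submfn}, discrete midpoint convexity \eqref{midptcnvfn}, or the exchange inequalities \eqref{mconvex2Z}, \eqref{mnconvexc2Z}) is left unchanged when a linear term $\sum_i c_i x_i$ is subtracted from both sides. The second pillar is that the minimizer set of a $\mathrm{P}$-convex function, when nonempty, is a $\mathrm{P}'$-convex set; this is the standard link between the function classes and the set classes. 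The \textbf{only if} direction is then immediate: given $f \in \mathrm{P}$, linear invariance yields $f[-c] \in \mathrm{P}$, and the second pillar makes $\argmin f[-c]$ a $\mathrm{P}'$-convex set for every $c$.

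For the \textbf{if} direction the core is a co-optimality (Lagrangian) argument, which I would spell out first for \Lnat-convexity. Fix $x,y \in \dom f$ and set $s=x+y$. I would minimize $f(z)+f(w)$ over the affine set $\{(z,w) : z+w=s\}$; the hypothesis that $f$ is convex-extensible or has bounded nonempty effective domain guarantees a minimizer $(x\sp{\ast},y\sp{\ast})$. By convex duality applied to the convex extension $\overline{f}$, there is a single $c \in \RR\sp{n}$ for which both $x\sp{\ast}$ and $y\sp{\ast}$ minimize $f[-c]$, so $x\sp{\ast},y\sp{\ast} \in \argmin f[-c]$; convex-extensibility is exactly what lets the real multiplier $c$ detect the integer optima. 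The hypothesis makes $\argmin f[-c]$ an \Lnat-convex set, so by \eqref{midptcnvset} it contains $\lceil (x\sp{\ast}+y\sp{\ast})/2\rceil$ and $\lfloor (x\sp{\ast}+y\sp{\ast})/2\rfloor$. Since $x\sp{\ast}+y\sp{\ast}=s=x+y$, these are exactly $p=\lceil (x+y)/2\rceil$ and $q=\lfloor (x+y)/2\rfloor$, both sharing the minimum value of $f[-c]$. As $p+q=x\sp{\ast}+y\sp{\ast}$, the linear terms cancel and $f(p)+f(q)=f(x\sp{\ast})+f(y\sp{\ast})\le f(x)+f(y)$, which is precisely \eqref{midptcnvfn}. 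Hence $f$ is \Lnat-convex.

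The L-convex case adds only the translation property \eqref{shiftlfnZ}, which $f$ inherits because every $\argmin f[-c]$ carries it. The \Mnat- and M-convex cases run along the same lines over the constraint $z+w=x+y$, but invoke the set exchange axioms \Bnvex and \Bvex on $\argmin f[-c]$ in place of discrete midpoint convexity: the index $i \in \suppp(x-y)$ selects the coordinate, and the set axiom produces the exchanged pair sharing the optimal $f[-c]$-value. Part (1) then requires no separate work: a subset of $\ZZ\sp{n}$ is an integer box if and only if it is both \Lnat-convex and \Mnat-convex, so once every $\argmin f[-c]$ is a box, parts (3) and (5) make $f$ both \Lnat- and \Mnat-convex, and a function with both properties is separable convex.

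The main obstacle is twofold. First is the duality step that manufactures the single $c$ simultaneously optimizing $f[-c]$ at $x\sp{\ast}$ and $y\sp{\ast}$: this is where the blanket hypothesis on $f$ is consumed, since without attainment of the constrained minimum, or without $f=\overline{f}|_{\ZZ\sp{n}}$, the multiplier need not exist or need not certify the integer points. Second is part (2): integral convexity is not governed by an exchange or midpoint axiom on $\dom f$ but by convexity of the local extension, so there I would replace the set axiom by Theorem \ref{THmmtt19ThA1}, verifying $\tilde f((x+y)/2) \le \tfrac12(f(x)+f(y))$ through the same co-optimal choice of $c$ together with integral convexity of $\argmin f[-c]$. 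Ensuring that the \emph{local} extension $\tilde f$, and not merely the global convex closure, is the object controlled by the argmin hypothesis is the delicate point to check.
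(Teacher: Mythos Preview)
Your only-if direction is fine and matches the paper.  For the if direction, however, your Lagrangian scheme has two genuine gaps.

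First, the duality step does not go through as written.  You take an integer minimizer $(x^{\ast},y^{\ast})$ of $f(z)+f(w)$ over $z+w=s$ and assert that convex duality for $\overline f$ produces a single $c$ with $x^{\ast},y^{\ast}\in\argmin f[-c]$.  But the Lagrange multiplier of the \emph{continuous} problem $\min_{z+w=s}\bigl(\overline f(z)+\overline f(w)\bigr)$ certifies the \emph{real} minimizer, which in general is not $(x^{\ast},y^{\ast})$; the integer and real optimal values can differ even when $f$ is convex-extensible, so the multiplier need not place $x^{\ast}$ and $y^{\ast}$ in $\argmin f[-c]$.  What you actually need is strong Lagrangian duality for the \emph{integer} constrained problem, and that is exactly what fails for a generic convex-extensible $f$.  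In the bounded-domain case the situation is worse: $f$ is not assumed convex-extensible there, so $\overline f$ need not agree with $f$ on $\ZZ^{n}$ at all, and the multiplier $c$ has no reason to detect integer minima.  You flag this step as ``the main obstacle'' but do not resolve it.

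Second, for parts (5) and (6) the passage from $(x,y)$ to $(x^{\ast},y^{\ast})$ destroys the structure you need.  The \Mnat-exchange axiom asks, for a \emph{given} $i\in\suppp(x-y)$, for some $j$ with $f(x-\unitvec{i}+\unitvec{j})+f(y+\unitvec{i}-\unitvec{j})\le f(x)+f(y)$.  After minimizing over $z+w=x+y$ you only know $x^{\ast}+y^{\ast}=x+y$; the index $i$ may fail to lie in $\suppp(x^{\ast}-y^{\ast})$, so applying \Bnvex\ to $x^{\ast},y^{\ast}$ in $\argmin f[-c]$ says nothing about moves at coordinate $i$ from $x$ and $y$.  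The cancellation trick that worked for discrete midpoint convexity (where only the sum $x+y$ matters) does not carry over.

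The paper proceeds quite differently: the bounded-domain case of each part is quoted from \cite{Mdcasiam} (Theorems 3.29, 6.30, 7.17), and the convex-extensible case is reduced to the bounded one by restricting $f$ to an arbitrary finite box $[a,b]_{\ZZ}$ and using the identity $(\argmin f[-c])\cap[a,b]=\argmin(f_{[a,b]}[-c])$, valid under convex-extensibility.  Since each set class in question is closed under intersection with a box, the hypothesis on $f$ transfers to $f_{[a,b]}$, and the bounded case then yields the desired convexity of $f_{[a,b]}$ for every box, hence of $f$.
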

\begin{proof}
It follows easily from the definitions that
the only-if parts in all cases (1)--(6) 
hold without the assumption of convex-extensibility or boundedness of $\dom f$.

The if-parts under the assumption of bounded $\dom f$ are known in the literature.
Part (1) for separable convexity is obvious.
Part (2) for integral convexity is given in  
\cite[Theorem 3.29]{Mdcasiam}.
Parts (3) and (4) for \Lnat- and L-convexity are given in  
\cite[Theorem 7.17]{Mdcasiam}.
Parts (5) and (6) for \Mnat- and M-convexity are given in  
\cite[Theorem 6.30]{Mdcasiam}.

The proof of the if-part
under the assumption of convex-extensibility of  $f$
can be reduced to the case of a bounded effective domain.
We demonstrate this reduction for \Lnat-convex functions.
A function $f$ is \Lnat-convex 
if and only if its restriction to every finite box is \Lnat-convex.
Let $f_{[a,b]}$ denote the restriction of $f$ 
to a finite integer box $[a,b]=[a,b]_{\ZZ}$,
and note that
$(f[-c])_{[a,b]} =   f_{[a,b]}[-c]$.
When $f$ is convex-extensible,  we have the relation
\begin{equation}  \label{argminfcabfabc}
 (\argmin f[-c]) \cap [a,b] =   \argmin (f_{[a,b]}[-c]) .
\end{equation}
By the assumption, $\argmin f[-c]$ is an \Lnat-convex set,
from which follows that
its intersection with the box $[a,b]$,
i.e., 
$(\argmin f[-c]) \cap [a,b]$,
is also an \Lnat-convex set.
Hence, by \eqref{argminfcabfabc},
$\argmin (f_{[a,b]}[-c])$ is an \Lnat-convex set for every $c$.
Since $\dom f_{[a,b]}$ is bounded, 
$f_{[a,b]}$ is an \Lnat-convex function.
Therefore, $f$ is \Lnat-convex.
The same argument is valid for other kinds of discrete convex functions.
\end{proof}

Moreover, we can show a similar statement for multimodularity,
which does not seem to have been made in the literature.

\begin{theorem}  \label{THmmfnargmin}
Let $f: \ZZ\sp{n} \to \RR \cup \{ +\infty \}$ be a function 
that is convex-extensible or has a bounded nonempty effective domain.
Then $f$ is multimodular 
if and only if
$\argmin f[-c]$ is a multimodular set
for each $c \in \RR\sp{n}$.
\end{theorem}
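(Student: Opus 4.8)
The plan is to reduce multimodularity to \Lnat-convexity via the linear change of variables $x = Dp$ established in Theorem~\ref{THmmfnlnatfn}, and then invoke the already-proved \Lnat-convex case of Theorem~\ref{THfnargminG}(3). Concretely, given $f$, define $g(p) = f(Dp)$ as in \eqref{mmfnGbyF}, so that $f$ is multimodular if and only if $g$ is \Lnat-convex. The key observation is that the affine substitution $x = Dp$ intertwines the two minimizer-set correspondences: I must check that forming $\argmin$ and applying the coordinate change $D$ commute in the appropriate sense, so that the multimodular minimizer sets of $f[-c]$ correspond exactly to the \Lnat-convex minimizer sets of $g[-c']$ for a suitably transformed cost vector $c'$.

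First I would make the cost-vector bookkeeping precise. Since $g(p) = f(Dp)$, subtracting a linear term $\langle c, x\rangle$ from $f$ becomes subtracting $\langle c, Dp\rangle = \langle D\sp{\top}c, p\rangle$ from $g$; that is, $(f[-c])(Dp) = g[-D\sp{\top}c](p)$. As $c$ ranges over all of $\RR\sp{n}$ and $D$ is nonsingular, $D\sp{\top}c$ also ranges over all of $\RR\sp{n}$, so the family $\{g[-c'] : c' \in \RR\sp{n}\}$ is identical to the family $\{(f[-c])\circ D : c \in \RR\sp{n}\}$. Because $D$ is a bijection on $\ZZ\sp{n}$ (it is unimodular, with integer inverse \eqref{matDinv}), the minimizers transform cleanly: $\argmin g[-D\sp{\top}c] = D\sp{-1}(\argmin f[-c])$, equivalently $\argmin f[-c] = D(\argmin g[-c'])$ with $c' = D\sp{\top}c$. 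Finally, recall from Section~\ref{SCmultimod} that a set $S$ is multimodular exactly when $S = \{Dp \mid p \in T\}$ for an \Lnat-convex set $T = D\sp{-1}S$; hence $\argmin f[-c]$ is multimodular if and only if $\argmin g[-c']$ is \Lnat-convex.

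With these dictionaries in place the equivalence is immediate. For the forward direction, if $f$ is multimodular then $g$ is \Lnat-convex by Theorem~\ref{THmmfnlnatfn}, so by Theorem~\ref{THfnargminG}(3) every $\argmin g[-c']$ is an \Lnat-convex set, whence every $\argmin f[-c] = D(\argmin g[-c'])$ is a multimodular set. Conversely, if every $\argmin f[-c]$ is multimodular, then every $\argmin g[-c']$ is \Lnat-convex (ranging over all $c'$ as noted), so $g$ is \Lnat-convex by Theorem~\ref{THfnargminG}(3), and thus $f$ is multimodular by Theorem~\ref{THmmfnlnatfn}. I would also remark that the hypothesis of convex-extensibility or bounded nonempty $\dom f$ transfers verbatim across the substitution $x = Dp$, since $D$ is a unimodular affine bijection: $\dom g = D\sp{-1}(\dom f)$ is bounded and nonempty iff $\dom f$ is, and convex-extensibility is preserved because $D$ is linear. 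This ensures Theorem~\ref{THfnargminG}(3) applies to $g$.

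I do not expect a genuine obstacle here; the content is entirely the translation of multimodular objects into \Lnat-convex objects through $D$. The one point requiring care is the commutation $\argmin g[-D\sp{\top}c] = D\sp{-1}(\argmin f[-c])$, which depends essentially on $D$ being a bijection of $\ZZ\sp{n}$ onto itself, not merely of $\RR\sp{n}$; this is exactly why the unimodularity and integrality of $D\sp{-1}$ recorded in \eqref{matDinv} are needed, rather than an arbitrary invertible $D$. Once that is stated the proof is a one-line reduction to the \Lnat-convex case, and no separate boundedness or convex-extensibility argument beyond the transfer remark is required.
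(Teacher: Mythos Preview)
Your proof is correct and follows essentially the same approach as the paper: define $g(p)=f(Dp)$, use Theorem~\ref{THmmfnlnatfn} to equate multimodularity of $f$ with \Lnat-convexity of $g$, observe that $\argmin f[-c] = D(\argmin g[-D\sp{\top}c])$, and invoke Theorem~\ref{THfnargminG}(3). Your write-up is more detailed than the paper's (in particular you explicitly verify that the hypotheses transfer to $g$ and that $c' = D\sp{\top}c$ ranges over all of $\RR\sp{n}$), but the argument is the same.
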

\begin{proof}
This is a straightforward translation of 
Theorem~\ref{THfnargminG} (3) for an \Lnat-convex function.
Let $g(p)=f(Dp)$.
By Theorem~\ref{THmmfnlnatfn},
$f$ is multimodular if and only if $g$ is \Lnat-convex,
whereas the relation
\[
\argmin f[-c] = \{ x = Dp \mid p \in \argmin g[-D\sp{\top} c]  \ \}
\]
shows that $\argmin f[-c]$ is multimodular if and only if
$\argmin g[-c']$ is \Lnat-convex for $c' = D\sp{\top} c$.
\end{proof}

Jump M-convexity as well as jump \Mnat-convexity does not admit such characterization.
This is demonstrated by the following example.

\begin{example}  \rm  \label{EXjumpMfnargmindim2}
Let $S$ be a subset of $\ZZ\sp{2}$ 
defined by
\[
S = \{ (x_1, x_2) \in \ZZ\sp{2} 
\mid 
 0 \leq x_1 \leq 4, \ 0 \leq x_2 \leq 4, \
 \mbox{$x_1 + x_2$: even}
  \}.
\]
This set is a constant-parity jump system.
Consider 
$f: S \to \RR$ 
defined by
\begin{equation}  \label{jumpMfnargmindim2}
f(x_1,x_2) = \begin{cases}
 0 &  (x_1, x_2 \in \{ 0,2,4 \}),
\\
 \alpha  &  ((x_1, x_2) = (1,1), (1,3)),
\\
 \beta  &  ((x_1, x_2) = (3,1), (3,3))
               \end{cases}
\end{equation}
with parameters $\alpha$ and $\beta$, which may be shown as
\[
f(x_{1},x_{2}) =
\begin{array}{|ccccc|}
 \hline 
   0 & - & 0 & - & 0 
\\
 -  & \alpha & - & \beta  & -
\\
   0 & - & 0 & - & 0 
\\
 -  & \alpha & - & \beta  & -
\\
   0 & - & 0 & - & 0 
\\ \hline 
\end{array}  \ .
\]
This function is jump M-convex if and only if
$\alpha = \beta$.
Indeed, for 
$x=(0,0)$, $y=(2,2)$, and $s=(1,0)$, for example,
we can take $t=(1,0)$, for which  
$x+s+t=(2,0)$, $y-s-t=(0,2)$, and
$f(x)+f(y) = 0 + 0 = f(x+s+t)  + f(y-s-t)$ in 
\eqref{jumpmexc2}.
For 
$x=(0,0)$, $y=(3,3)$, and $s=(1,0)$, 
we can choose $t=(1,0)$ or $t=(0,1)$.
For either choice we have
$f(x+s+t)  + f(y-s-t) = \alpha$,
while $f(x)+f(y) = \beta$.
Therefore, the inequality \eqref{jumpmexc2} is satisfied if and only if 
$\alpha \leq \beta$.
By considering 
$x=(4,4)$, $y=(1,1)$, and $s=(-1,0)$, we obtain
$\alpha \geq \beta$.  From this argument and symmetry, we can conclude that
$f$ is jump M-convex if and only if $\alpha = \beta$.

Now suppose that $0 < \alpha < \beta$.  Then $f$ is not jump M-convex.
However, $\argmin f[-c]$ is a constant-parity jump system for each $c \in \RR\sp{2}$.
Indeed, 
$\argmin f[-c] = S \cap (2 \ZZ)\sp{2}$ for $c=(0,0)$, and
for $c \not=(0,0)$, $\argmin f[-c]$ 
is equal to a singleton or a set of three points 
like $\{ (0,0), (2,0), (4,0) \}$
lying on a horizontal or vertical line.
\end{example}

However, such characterization is valid for jump M-convex functions
if $\dom f \subseteq  \{ 0,1 \}\sp{n}$.

\begin{theorem}  \label{THjm01fnargmin}
Assume that $\dom f$ is a constant-parity jump system contained in 
$\{ 0,1 \}\sp{n}$.
Then $f$ is jump M-convex
if and only if
$\argmin f[-c]$ is a constant-parity jump system for each $c \in \RR\sp{n}$.
\end{theorem}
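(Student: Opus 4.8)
The plan is to treat the two implications separately, the only-if part being routine and the if-part carrying all the difficulty.

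For the only-if part I would first note that $f[-c]$ is again jump M-convex for every $c\in\RR^{n}$: in (JM-EXC) the four points obey $(x+s+t)+(y-s-t)=x+y$, so subtracting the linear term $\sum_i c_i x_i$ changes both sides of \eqref{jumpmexc2} by the same amount. Next, for $x,y\in\argmin f[-c]$ and any $(x,y)$-increment $s$, apply (JM-EXC) to $f[-c]$ to obtain an $(x+s,y)$-increment $t$ with $x+s+t,\,y-s-t\in\dom f$ and $f[-c](x)+f[-c](y)\ge f[-c](x+s+t)+f[-c](y-s-t)$. Writing $\mu=\min f[-c]$, the left-hand side equals $2\mu$ while the right-hand side is at least $2\mu$, so equality holds and $x+s+t,\,y-s-t\in\argmin f[-c]$. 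This is precisely (J-EXC), hence $\argmin f[-c]$ is a constant-parity jump system. Note that this direction uses neither $\dom f\subseteq\{0,1\}^{n}$ nor finiteness.

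For the if-part I would argue by induction on $n$, exploiting two structural simplifications peculiar to $\{0,1\}^{n}$. First, since $\dom f\subseteq\{0,1\}^{n}$, in any instance of (JM-EXC) the two increments must be $s=\pm\unitvec{i}$, $t=\pm\unitvec{j}$ with $i\ne j$ (a repeated increment $s=t$ would force a coordinate out of $\{0,1\}$), both pointing from $x$ toward $y$; thus the exchange is a symmetric double flip of two coordinates in $\suppp(x-y)\cup\suppm(x-y)$. Second, I would reduce to pairs that differ in every coordinate. Given $x,y\in\dom f$ with $\supp(x-y)\ne N$, restrict $f$ to the coordinate face $F=\{z:z_{k}=x_{k}\ (k\notin\supp(x-y))\}$. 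One checks that $\dom f\cap F$ is still a constant-parity jump system, since an increment can never move a coordinate on which both endpoints agree; and that for a weight $c$ penalizing the fixed coordinates heavily one has $\argmin (f|_{F})[-c']=(\argmin f[-c])\cap F$, so $f|_{F}$ inherits the minimizer hypothesis on $F\cong\{0,1\}^{|\supp(x-y)|}$. By the induction hypothesis $f|_{F}$ is jump M-convex, and the exchange it provides for $(x,y,s)$ lies inside $F$ and is therefore valid for $f$. The base case $\|x-y\|_{1}=2$ is immediate, since flipping the two coordinates in which $x$ and $y$ differ merely interchanges $x$ and $y$.

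It remains to verify the exchange for a pair with $\supp(x-y)=N$, i.e.\ $y=\vecone-x$; this is the heart of the matter. Here I would aim to produce a single $c^{*}\in\RR^{n}$ with $x,\,\vecone-x\in\argmin f[-c^{*}]$. Once such a $c^{*}$ exists, (J-EXC) for the constant-parity jump system $\argmin f[-c^{*}]$ yields $t$ with $x+s+t,\,y-s-t\in\argmin f[-c^{*}]$, and the tightness argument of the only-if part converts this into $f(x)+f(y)=f(x+s+t)+f(y-s-t)$, i.e.\ (JM-EXC) with equality. The main obstacle is precisely the existence of $c^{*}$: $x$ and its complement must be simultaneously exposed as minimizers, and this is where the hypothesis $\dom f\subseteq\{0,1\}^{n}$ is indispensable, since the analogous statement on $\ZZ^{n}$ is false by Example \ref{EXjumpMfnargmindim2}. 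I expect to secure $c^{*}$ by a minimal-counterexample argument: assuming the exchange fails at $(x,\vecone-x,s)$, so that $f(x)+f(y)<f(x+s+t)+f(y-s-t)$ for every admissible $t$, the strict local optimality of the pair $(x,\vecone-x)$ along all double-flip directions, together with the finiteness of $\dom f$ and the already-settled lower-dimensional cases, should force the normal cones of the lower convex hull at $x$ and at $\vecone-x$ to meet in the required direction, producing $c^{*}$ and hence a contradiction. Controlling this co-exposure is the step I expect to be delicate.
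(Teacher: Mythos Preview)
Your only-if direction is correct and standard. For the if-direction, the reduction to complementary pairs $y=\vecone-x$ via restriction to a coordinate face and induction on $n$ is sound: the face of a constant-parity jump system in $\{0,1\}^{n}$ is again one (your justification is in fact correct, since any $(u,v)$-increment with $u,v\in F$ already lies in the face), and the minimizer hypothesis descends to $f|_{F}$ exactly as you describe by choosing large penalties on the fixed coordinates.

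The genuine gap is the full-support step. The existence of a weight $c^{*}$ with both $x$ and $\vecone-x$ in $\argmin f[-c^{*}]$ is \emph{not} a consequence of the failure of (JM-EXC) at $(x,\vecone-x,s)$, and the ``normal cones should meet'' heuristic does not establish it. Here is a concrete obstruction: for any $c$ and any $z$ with $z,\vecone-z\in\dom f$ one has
\[
f[-c](z)+f[-c](\vecone-z)=f(z)+f(\vecone-z)-c(N),
\]
so if $x,\vecone-x\in\argmin f[-c^{*}]$ then necessarily $f(x)+f(\vecone-x)\le f(z)+f(\vecone-z)$ for every such $z$. A ``minimal counterexample'' in your sense need not achieve the global minimum of this complementary-pair sum; there may well be non-violating pairs $(z,\vecone-z)$ with strictly smaller sum, in which case no $c^{*}$ co-exposes $x$ and $\vecone-x$ and the argument stalls. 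Nothing in the ``strict local optimality along double-flip directions'' or in the already-settled lower-dimensional cases rules this out.

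The paper itself does not give a self-contained argument here: it observes that constant-parity jump systems in $\{0,1\}^{n}$ are exactly even delta-matroids, that jump M-convex functions on them are (negatives of) valuated delta-matroids, and then invokes Theorem~2.2 of \cite{Mmax97}, where precisely this characterization is proved. If you want a direct proof, that reference is the place to look; the argument there does not proceed by co-exposing a prescribed complementary pair but constructs the weight vector more carefully from the exchange structure.
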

\begin{proof}
A constant-parity jump system contained in 
$\{ 0,1 \}\sp{n}$
can be identified with an even delta-matroid,
and a function $f$ 
with $\dom f \subseteq  \{ 0,1 \}\sp{n}$
is jump M-convex
if and only if $-f$ is a valuated delta-matroid \cite{DW91valdel,Wen93pf}.
With this correspondence,
Theorem 2.2 of \cite{Mmax97} 
for valuated delta-matroids
is translated into this theorem.
\end{proof}


Such characterization fails
for (global and local) discrete midpoint convexity,
as pointed out by \cite{TT20ddmc} only recently (after the submission of this paper).
That is, there is a function $f$ which is not
discrete midpoint convex but for which 
$\argmin f[-c]$ is discrete midpoint convex for every $c$.


\section{Operations on Discrete Convex Sets}
\label{SCsetope}

In this section we consider operations on discrete convex sets.
The behavior of discrete convex sets 
with respect to the operations 
discussed below
is summarized in Table~\ref{TBoperation3dcsetZ} in Introduction.

\subsection{Direct sum}
\label{SCdirsumdset}

For two  sets 
$S_{1} \subseteq \ZZ\sp{n_{1}}$ and $S_{2} \subseteq \ZZ\sp{n_{2}}$, 
their {\em direct sum} is 
defined as
\begin{equation} \label{dirsumsetdef}
 S_{1} \oplus S_{2} = \{ (x,y) \mid x \in S_{1}, y \in S_{2}  \},
\end{equation}
which is a subset of  $\ZZ\sp{n_{1}+n_{2}}$.

In most cases it is obvious that the direct sum operation preserves
the discrete convexity in question.
However, this is not the case with 
multimodularity and discrete midpoint convexity.
We have the following proposition for the obvious cases.

\begin{proposition} \label{PRsetdirsumG}
The direct sum of two integrally convex sets
is an integrally convex set.
Similarly for 
\Lnat-convex sets,
L-convex sets,
\Mnat-convex sets,
M-convex sets,
simultaneous exchange jump systems,
and constant-parity jump systems.
\end{proposition}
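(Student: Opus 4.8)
The plan is to verify each of the seven set classes directly from its defining condition, exploiting the fact that the direct sum operation acts independently on the two coordinate blocks. Throughout, write a point of $S_1 \oplus S_2 \subseteq \ZZ\sp{n_1+n_2}$ as a pair $(x,u)$ with $x \in \ZZ\sp{n_1}$ and $u \in \ZZ\sp{n_2}$, so that the first $n_1$ coordinates and the last $n_2$ coordinates never interact under the natural operations ($\vee$, $\wedge$, $\lceil\cdot\rceil$, $\lfloor\cdot\rfloor$, addition of unit vectors, increments). The key observation making every case routine is that each defining exchange/closure condition can be checked coordinate-block by coordinate-block, and the relevant vector operations preserve the block decomposition.

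For the \Lnat-convex and L-convex cases I would start from the discrete midpoint convexity condition \eqref{midptcnvset}: given $(x,u),(y,v) \in S_1 \oplus S_2$, the componentwise rounding satisfies $\lceil ((x,u)+(y,v))/2 \rceil = (\lceil (x+y)/2 \rceil, \lceil (u+v)/2 \rceil)$ and similarly for the floor, so closure of $S_1$ and $S_2$ separately yields closure of the direct sum; for L-convexity one additionally checks submodularity blockwise and produces the constant $r = r_1 + r_2$ for the translation invariance \eqref{shiftlfnZ}. For integral convexity I would use the neighborhood factorization $N((x,u)) = N(x) \times N(u)$ together with the characterization that $S$ is integrally convex iff $z \in \overline{S}$ implies $z \in \overline{S \cap N(z)}$, noting that convex hulls of a direct sum factor as $\overline{S_1 \oplus S_2} = \overline{S_1} \times \overline{S_2}$. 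For the \Mnat- and M-convex cases and the two jump-system classes, the essential point is that for $(x,u),(y,v) \in S_1 \oplus S_2$ the positive and negative supports split as $\suppp((x,u)-(y,v)) = \suppp(x-y) \sqcup \suppp(u-v)$ (and similarly for $\suppm$), so an index $i$ witnessing the exchange property lies in exactly one block; I would then apply the exchange property of the corresponding factor, observing that the exchanged index $j$ (or increment $t$) stays within the same block, and that for jump systems any $((x,u),(y,v))$-increment is itself supported on a single block, so it is an $(x,y)$-increment or a $(u,v)$-increment accordingly.

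I do not expect a genuine obstacle here, which is consistent with the proposition being stated for the obvious cases; the only mild care needed is bookkeeping in the jump-system arguments. Specifically, in verifying the 2-step axiom or (J$\sp{\natural}$-EXC) for the direct sum, one must confirm that after taking a first increment $s$ supported in (say) the first block, the second increment $t$ guaranteed by the factor's axiom is an $((x,u)+s,(y,v))$-increment of the direct sum; this follows because an increment constrained to $[x \wedge y, x \vee y]_{\ZZ}$ in the first block automatically respects the box $[(x,u) \wedge (y,v), (x,u) \vee (y,v)]_{\ZZ}$ in the remaining coordinates, where $u$ and $v$ are held fixed.

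Therefore, the plan is to treat the seven classes in the order (i) \Lnat- and L-convex, (ii) integrally convex, (iii) \Mnat- and M-convex, (iv) s.e.\ jump systems and constant-parity jump systems, in each case reducing the defining condition for $S_1 \oplus S_2$ to the same condition for $S_1$ and $S_2$ via the block decomposition, and to present only the support-splitting and rounding-factorization identities explicitly, leaving the remaining verifications as immediate.
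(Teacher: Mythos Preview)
Your proposal is correct and matches the paper's approach: the paper states this proposition without proof, remarking only that ``in most cases it is obvious that the direct sum operation preserves the discrete convexity in question,'' and your blockwise verification is precisely the obvious argument being alluded to. Your treatment is therefore more explicit than the paper's, but there is no divergence in method.
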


A multimodular set is defined with reference to 
an ordering of the underlying set.
When we consider the direct sum of two multimodular sets
$S_{1} \subseteq \ZZ\sp{n_{1}}$ and $S_{2} \subseteq \ZZ\sp{n_{2}}$,
we assume that the components of
$(x,y)$ 
are ordered naturally
with $x_{1},x_{2}, \ldots, x_{n_{1}}$ followed by $y_{1},y_{2}, \ldots, y_{n_{2}}$.
In this sense, it is more appropriate to 
regard an element $(x,y)$ of $S_{1} \oplus S_{2}$
as a {\em concatenation} of 
$x \in S_{1}$ and $y \in S_{2}$.

Proposition \ref{PRsetdirsumMult} below states that 
the direct sum $S_{1} \oplus S_{2}$ is also multimodular.
It is noted that 
this is a nontrivial statement,
since the definition of the multimodularity
of $S_{1} \oplus S_{2}$
involves the vector
$\unitvec{i}-\unitvec{i+1}$ for $i=n_{1}$
in \eqref{multimodirection1},
which does not appear in the definitions of 
the multimodularity of $S_{1}$ and $S_{2}$.

\begin{proposition} \label{PRsetdirsumMult}
The direct sum of two multimodular sets is multimodular.
\end{proposition}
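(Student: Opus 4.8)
The plan is to pass from multimodular sets to \Lnat-convex sets through the unimodular matrix $D$ of \eqref{matDdef}, and then to exploit the polyhedral description of \Lnat-convex sets by difference and box inequalities that was used in the proof of Theorem~\ref{THmmsetpolydes}. Recall that a set $S$ is multimodular if and only if $T := \{ p = D^{-1}x \mid x \in S \}$ is \Lnat-convex, the components of $p = D^{-1}x$ being the partial sums $p_{i} = x_{1} + x_{2} + \cdots + x_{i}$. Accordingly, writing $T_{1} = \{ D^{-1}x \mid x \in S_{1}\} \subseteq \ZZ^{n_{1}}$ and $T_{2} = \{ D^{-1}y \mid y \in S_{2}\} \subseteq \ZZ^{n_{2}}$, both of which are \Lnat-convex by hypothesis, it suffices to show that the partial-sum transform $U := \{ r = D^{-1}(x,y) \mid (x,y) \in S_{1} \oplus S_{2} \} \subseteq \ZZ^{n_{1}+n_{2}}$ is \Lnat-convex.

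First I would compute $U$ explicitly. For $(x,y) \in S_{1} \oplus S_{2}$ the partial sums split as $r_{i} = p_{i}$ for $i \leq n_{1}$ (with $p = D^{-1}x \in T_{1}$) and $r_{n_{1}+k} = p_{n_{1}} + q_{k}$ for $1 \leq k \leq n_{2}$ (with $q = D^{-1}y \in T_{2}$), since every partial sum reaching into the second block contains the full sum $p_{n_{1}} = x_{1} + \cdots + x_{n_{1}}$ of the first block. Hence, writing $r = (p, w)$ with $w_{k} = p_{n_{1}} + q_{k}$,
\begin{equation*}
 U = \{ (p, w) \mid p \in T_{1}, \ (w_{1} - p_{n_{1}}, \ldots, w_{n_{2}} - p_{n_{1}}) \in T_{2} \}.
\end{equation*}
This coupling of the two blocks through the single variable $p_{n_{1}}$ is exactly the phenomenon flagged before the statement (the direction $\unitvec{n_{1}}-\unitvec{n_{1}+1}$), and handling it is the crux of the argument.

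The key step is to describe $U$ by inequalities of the admissible \Lnat-form. By the polyhedral description of \Lnat-convex sets (\cite[Section~5.5]{Mdcasiam}; cf.\ the proof of Theorem~\ref{THmmsetpolydes}), $T_{1}$ is cut out by constraints $p_{i} - p_{j} \leq d^{(1)}_{ij}$ and $a^{(1)}_{i} \leq p_{i} \leq b^{(1)}_{i}$, and $T_{2}$ by $q_{k} - q_{l} \leq d^{(2)}_{kl}$ and $a^{(2)}_{k} \leq q_{k} \leq b^{(2)}_{k}$. Substituting $q_{k} = w_{k} - p_{n_{1}}$ leaves the $T_{1}$-constraints untouched, turns each difference constraint $q_{k} - q_{l} \leq d^{(2)}_{kl}$ into $w_{k} - w_{l} \leq d^{(2)}_{kl}$ (again a difference of two coordinates of $r$), and---this is the essential observation---converts each box constraint $a^{(2)}_{k} \leq q_{k} \leq b^{(2)}_{k}$ into the pair of difference constraints $w_{k} - p_{n_{1}} \leq b^{(2)}_{k}$ and $p_{n_{1}} - w_{k} \leq -a^{(2)}_{k}$ between the coordinate $w_{k}$ and the coordinate $p_{n_{1}}$ of $r$. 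Thus $U$ is described entirely by inequalities of the form $r_{\mu} - r_{\nu} \leq \text{const}$ together with the box constraints on the first block, which is precisely the admissible form; hence $U$ is \Lnat-convex, and therefore $S_{1} \oplus S_{2} = \{ Dr \mid r \in U \}$ is multimodular.

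The only place where genuine care is needed is verifying that no inequality of a forbidden shape (for instance $r_{\mu} + r_{\nu} \leq \text{const}$) survives the substitution. The main obstacle is conceptual rather than computational: one must see that the \emph{absolute} (box) bounds on the second block are reborn as \emph{relative} (difference) bounds measured against $p_{n_{1}}$, so that everything stays within the class of difference-and-box systems. I note in passing that a more pedestrian route---verifying discrete midpoint convexity of $U$ directly---also works but forces a parity case analysis on $p_{n_{1}} + p'_{n_{1}}$ (determining whether the rounded midpoint of the second block requires $\lceil (q+q')/2 \rceil$ or $\lfloor (q+q')/2 \rfloor$, both of which lie in $T_{2}$); the polyhedral argument above avoids this entirely.
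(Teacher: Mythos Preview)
Your argument is correct. The paper in fact gives two proofs, and yours sits neatly between them: the paper's primary proof (Section~\ref{SCmmfndirsumprf}) performs the same transformation $r=(p,\,p_{n_1}\bm{1}+q)$ you compute (your passing remark about the parity case analysis on $p_{n_1}+p'_{n_1}$ describes that proof exactly), while the paper's alternative proof (Section~\ref{SCmmsetdirsumprf}) uses the polyhedral description as you do---but it stays on the multimodular side, invoking Theorem~\ref{THmmsetpolydes} directly to say that $S_1\oplus S_2$ is still cut out by consecutive-interval-sum bounds $a_K\le z(K)\le b_K$, with $a_K=-\infty$, $b_K=+\infty$ whenever $K$ straddles both blocks. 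Your version carries out the same polyhedral idea on the \Lnat-convex side after applying $D^{-1}$; the content is equivalent (indeed Theorem~\ref{THmmsetpolydes} was itself proved by the substitution $p_i=x_1+\cdots+x_i$), though the paper's formulation is a touch shorter because the ``coupling'' through $p_{n_1}$ becomes invisible: intervals that cross the block boundary simply get trivial bounds, rather than having to be checked to be difference constraints.
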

\begin{proof}
The proof is given in Section \ref{SCmmdirsumprf}.
\end{proof}

In contrast, the direct sum of discrete midpoint convex sets
is not necessarily discrete midpoint convex,
as shown in Example~\ref{EXdmcsetdirsum} below.

\begin{example} \rm \label{EXdmcsetdirsum}
Let
\[
S_{1}  =  \{ (1, 0), (0, 1) \},
\qquad
S_{2}  =  \ZZ,
\]
for which
$S_{1} \oplus S_{2}= \{ (1, 0, t), (0, 1, t) \mid t \in \ZZ \}$.
The sets $S_{1}$ and $S_{2}$ are both discrete midpoint convex,
whereas  $S_{1} \oplus S_{2}$ is not.
Indeed, for $x=(1, 0, 2)$ and $y=(0, 1, 0)$ in $S_{1} \oplus S_{2}$,
we have $\| x - y \|_{\infty} = 2$,
$(x+y)/2 = (1/2, 1/2, 1)$,
for which
$\left\lceil (x+y)/2 \right\rceil = (1, 1, 1) \not\in S_{1} \oplus S_{2}$,
and
$\left\lfloor (x+y)/2 \right\rfloor = (0, 0, 1) \not\in S_{1} \oplus S_{2}$.
\end{example}

\subsection{Splitting}
\label{SCsplitset}

Suppose that we are given a family 
$\{ U_{1},  U_{2}, \dots , U_{n} \}$
of  disjoint nonempty sets
indexed by $N = \{ 1, 2, \dots , n\}$. 
Let $m_{i}= |U_{i}|$ for $i=1,2,\ldots, n$ and define 
$m= \sum_{i=1}\sp{n} m_{i}$,
where $m \geq n$.
For a set $S \subseteq \ZZ\sp{n}$, 
the subset of $\ZZ\sp{m}$
defined by 
\begin{equation} \label{splitsetdef}
 T = \{ (y_{1}, y_{2}, \dots , y_{n}) \in  \ZZ\sp{m} \mid
  y_{i} \in \ZZ\sp{m_{i}}, \  x_{i} = y_{i}(U_{i}) \ \ (i \in N) , \ x  \in S   \}
\end{equation}
is called the {\em splitting} of $S$ by
$\{ U_{1},  U_{2}, \dots , U_{n} \}$.
A splitting is called an {\em elementary splitting}
if $|U_{k}|=2$ for some $k$ and $|U_{i}| =1$ for other $i\not= k$.
For example,
$T = \{ (y_{1}, y_{2}, y_{3}) \in  \ZZ\sp{3} \mid (y_{1}, y_{2}+ y_{3}) \in S  \}$
is an elementary splitting of $S \subseteq \ZZ\sp{2}$.
Any (general) splitting can be obtained by repeated applications of elementary splittings. 
It should be clear that the definition of splitting by \eqref{splitsetdef}
is consistent with the definition, given in Introduction,
in terms of the graph in Fig.~\ref{FGbiparmatroid} (b).

M-convexity  and its relatives are well-behaved 
with respect to the splitting operation, which is easy to see.

\begin{proposition} \label{PRsplitsetMMnat}
\quad


\noindent
{\rm (1)} 
The splitting of an \Mnat-convex set is \Mnat-convex.

\noindent
{\rm (2)} 
The splitting of an M-convex set is M-convex.

\noindent
{\rm (3)} 
The splitting of a simultaneous exchange jump system is a simultaneous exchange jump system.

\noindent
{\rm (4)} 
The splitting of a constant-parity jump system is a constant-parity jump system.
\end{proposition}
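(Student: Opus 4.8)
The plan is to reduce all four statements to the case of an \emph{elementary} splitting and then verify the governing exchange axiom directly. As noted after \eqref{splitsetdef}, every splitting is a composition of elementary ones, and each of the four classes is characterized by an exchange axiom that is preserved under composition; so it suffices to treat one elementary step and conclude by induction on the number of steps. Thus I would assume $|U_{k}|=2$ for a single $k$, taken to be $k=n$, and write the resulting set as $T=\phi\sp{-1}(S)$ where $\phi:\ZZ\sp{n+1}\to\ZZ\sp{n}$ is the aggregation map $\phi(y)=(y_{1},\dots,y_{n-1},\,y_{n}+y_{n+1})$. The two facts I would record at the outset are that $\phi(\unitvec{i})=\unitvec{i}$ for $i\le n-1$ while $\phi(\unitvec{n})=\phi(\unitvec{n+1})=\unitvec{n}$ (unit vectors read in the appropriate dimension), and that $\phi$ preserves the component sum, so $T$ lies on a constant-sum hyperplane (resp.\ is constant-parity) whenever $S$ does. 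In particular, once (1) is proved, (2) is immediate: an \Mnat-convex set with constant component sum is exactly an M-convex set, and both properties pass from $S$ to $T$.

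For (1), I would take $u,v\in T$ and $i\in\suppp(u-v)$, set $x=\phi(u),\,w=\phi(v)\in S$, and check \Bnvexb for $T$. The guiding principle is that an exchange in $S$ at a coordinate $\le n-1$ lifts verbatim through $\phi$, while an exchange at the aggregated coordinate $n$ lifts once one chooses, among $\{n,n+1\}$, a representative lying in the required support of $u-v$; such a representative always exists because the sign of $(x-w)_{n}$ equals the sign of $(u-v)_{n}+(u-v)_{n+1}$. The only genuinely new situation is $i\in\{n,n+1\}$, say $i=n$, with $(x-w)_{n}\le 0$; then necessarily $(u-v)_{n+1}<0$, and the \emph{internal} exchange $u-\unitvec{n}+\unitvec{n+1}\in T$, $v+\unitvec{n}-\unitvec{n+1}\in T$ (each having the same $\phi$-image as $u,v$) furnishes \Bnvexb(ii) with $j=n+1$. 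In every remaining case one applies \Bnvexb to $S$ at the appropriate coordinate and pulls the conclusion back through $\phi$.

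For (3) and (4) the scheme is identical but the moves are single increments, so the bookkeeping is more delicate. Given $u,v\in T$ and an $(u,v)$-increment $s$, the first issue is that $\phi(s)$ need not be an $(x,w)$-increment: when $s=\pm\unitvec{n}$ or $\pm\unitvec{n+1}$, the net imbalance $(x-w)_{n}$ may carry the ``wrong'' sign even though $s$ is legal inside the block. When this occurs the imbalance is purely internal and is absorbed by taking $t$ to be the opposite unit vector of the same block (for instance $s=\unitvec{n}$, $t=-\unitvec{n+1}$), which leaves $\phi$ unchanged and certifies (J-EXC) / (J$^{\natural}$-EXC) for $T$ directly. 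Otherwise $\phi(s)$ is a genuine increment, I apply the exchange axiom of $S$ to obtain an $(x+\phi(s),w)$-increment $t_{S}$, and lift it: a lift on a coordinate $\le n-1$ is immediate, while a lift on coordinate $n$ requires selecting the member of $\{n,n+1\}$ whose unit move is a valid $(u+s,v)$-increment.

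The main obstacle, and the only place where care is genuinely needed, is exactly this lifting step. I expect the crux to be the verification that a valid increment representative inside the split block always exists, i.e.\ that the ``room'' available at the aggregated coordinate $n$ in $S$ can be routed to one of the two split coordinates without violating the interval constraint defining an increment. Concretely, $t_{S}=\unitvec{n}$ being an increment means $x'_{n}<w_{n}$ with $x'=x+\phi(s)$, whence $(u+s)_{n}+(u+s)_{n+1}<v_{n}+v_{n+1}$, forcing at least one of $(u+s)_{n}<v_{n}$ or $(u+s)_{n+1}<v_{n+1}$; the corresponding unit vector is then the required representative, and its $\phi$-image lands $u+s+t$ and $v-s-t$ in $T$ automatically. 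For the simultaneous-exchange case (3) the first branch ``$x+s\in S,\ y-s\in S$'' of (J$^{\natural}$-EXC) is lifted in the same manner (using the internal exchange in the wrong-sign case), and for (4) constant-parity of $T$ is immediate since $\phi$ preserves component sums modulo $2$.
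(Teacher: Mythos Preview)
Your proof is correct. The reduction to an elementary splitting is the natural move, and your case analysis for \Bnvexb and for (J-EXC)/(J$^{\natural}$-EXC) is sound; in particular, the two nontrivial points---the ``internal exchange'' $s=\unitvec{n},\ t=-\unitvec{n+1}$ when the aggregated coordinate carries the wrong sign, and the lifting of $t_{S}=\pm\unitvec{n}$ by selecting whichever of $\{n,n+1\}$ still has room toward $v$---are exactly what is needed and you have justified them cleanly.

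By way of comparison, the paper does not actually prove Proposition~\ref{PRsplitsetMMnat}: it simply asserts the result is ``easy to see'' and relies on the literature (polymatroid theory for (1)--(2), and \cite{BouC95,KS05jump,Mmnatjump19} for (3)--(4)). For the function analogue (Proposition~\ref{PRsplitfnMMnat}) the paper likewise appeals to the network-induction theorems of \cite{Mdcasiam,KMT07jump,Mmnatjump19} rather than give a direct argument. Your approach is therefore more elementary and self-contained: it verifies the defining exchange axioms directly, without invoking the heavier machinery of network induction or polymatroid operations. The tradeoff is that the paper's route, once those external results are granted, handles all four parts uniformly as special cases of a single theorem, whereas your argument requires separate (though entirely parallel) bookkeeping for each axiom.
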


The splitting operation has never been investigated 
for integrally convex sets and multimodular sets. 
For integrally convex sets we can show the following.

\begin{proposition} \label{PRsplitsetIC}
The splitting of an integrally convex set is integrally convex.
\end{proposition}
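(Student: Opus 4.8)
The plan is to reduce to the case of an \emph{elementary} splitting and then to verify the defining condition of integral convexity directly, by lifting convex combinations from $S$ up to $T$. Since every splitting is a composition of elementary splittings, and a composition of operations each of which preserves integral convexity again preserves it, it suffices to treat the case $|U_{k}|=2$ for a single $k$ and $|U_{i}|=1$ for the other $i$. After relabelling we may assume $k=n$, so that $m=n+1$ and
\[
 T=\{\, y\in\ZZ\sp{n+1}\mid \phi(y)\in S \,\},
 \qquad
 \phi(y)=(y_{1},\dots,y_{n-1},\, y_{n}+y_{n+1}).
\]
Thus $T=\phi\sp{-1}(S)$ is the preimage of $S$ under the surjective integer linear map $\phi$ that adds the last two coordinates, whose kernel is spanned by $\unitvec{n}-\unitvec{n+1}$.

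Next I would record the facts needed to set up the verification. Since $\phi(T)\subseteq S$, taking convex hulls gives $\phi(\overline{T})\subseteq\overline{S}$, so that $w\in\overline{T}$ implies $x:=\phi(w)\in\overline{S}$; together with hole-freeness of $S$ (which follows from integral convexity, as $\overline{S}\cap\ZZ\sp{n}=S$) one in fact gets $\overline{T}=\phi\sp{-1}(\overline{S})$ and $T=\overline{T}\cap\ZZ\sp{n+1}$, confirming that $T$ has no holes. The essential content is then the neighbourhood condition: to prove $T$ integrally convex it suffices to show that every $w\in\overline{T}$ lies in $\overline{T\cap N(w)}$. So I fix such a $w$, set $x=\phi(w)\in\overline{S}$, and invoke integral convexity of $S$ to obtain a representation $x=\sum_{j}\lambda_{j}s\sp{(j)}$ with $s\sp{(j)}\in S\cap N(x)$, $\lambda_{j}>0$, $\sum_{j}\lambda_{j}=1$. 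The goal is to lift this to a convex representation of $w$ by points of $T\cap N(w)$.

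The heart of the argument, and the step I expect to be the main obstacle, is the lifting in the two split coordinates. For $i<n$ the lift is forced, $y_{i}=s\sp{(j)}_{i}$, which lies between $\lfloor w_{i}\rfloor$ and $\lceil w_{i}\rceil$, and the $\lambda$-weighted averages already reproduce $w_{i}=x_{i}$. For the last two coordinates I must write each integer $s\sp{(j)}_{n}\in\{\lfloor x_{n}\rfloor,\lceil x_{n}\rceil\}$ as $y_{n}+y_{n+1}$ with $y_{n}\in\{\lfloor w_{n}\rfloor,\lceil w_{n}\rceil\}$ and $y_{n+1}\in\{\lfloor w_{n+1}\rfloor,\lceil w_{n+1}\rceil\}$, so that the lifted points lie in $T\cap N(w)$ and their weighted averages equal $w_{n}$ and $w_{n+1}$ separately (the second being automatic once the first holds, since $x_{n}=w_{n}+w_{n+1}$). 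The difficulty is that $\phi$ sends $N(w)$ onto a \emph{wider} range than $N(x)$ in this coordinate (for instance $\lfloor w_{n}\rfloor+\lfloor w_{n+1}\rfloor$ may be strictly smaller than $\lfloor x_{n}\rfloor$), so the naive lift can fall outside $N(w)$. The resolution is to exploit the resulting non-uniqueness: when $s\sp{(j)}_{n}$ takes the ``middle'' admissible value it has two legitimate lifts, and I split the weight $\lambda_{j}$ between them by a fraction chosen so that the averages come out to $(w_{n},w_{n+1})$.

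Finally I would verify feasibility of this fraction by a short computation, organised into the few cases according to whether $w_{n}$ and $w_{n+1}$ are integral. The key inequality, which always holds, is the comparison of the fractional parts $w_{n}-\lfloor w_{n}\rfloor$, $w_{n+1}-\lfloor w_{n+1}\rfloor$, and $x_{n}-\lfloor x_{n}\rfloor$ forced by $x_{n}=w_{n}+w_{n+1}$: when $w_{n}$ and $w_{n+1}$ are both non-integral one has $x_{n}-\lfloor x_{n}\rfloor=(w_{n}-\lfloor w_{n}\rfloor)+(w_{n+1}-\lfloor w_{n+1}\rfloor)$ modulo $1$, and this is exactly what guarantees that the required weight-splitting fraction lies in $[0,1]$. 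Since all the lifted points lie in $T\cap N(w)$ and their convex combination equals $w$, this yields $w\in\overline{T\cap N(w)}$, and hence the integral convexity of $T$.
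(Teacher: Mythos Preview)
Your proposal is correct and follows essentially the same route as the paper: reduce to an elementary splitting, push $w\in\overline{T}$ down to $x=\phi(w)\in\overline{S}$, expand $x$ as a convex combination of points of $S\cap N(x)$ using the integral convexity of $S$, and then lift each term back to $T\cap N(w)$, splitting the weight $\lambda_{j}$ between the two admissible lifts in the two new coordinates when necessary. The paper organises the lifting step slightly differently---it sets up the choice of split weights as a tiny transportation problem (solved by the north-west corner rule), and it handles the case $\eta_{n}+\eta_{n+1}>1$ by coordinate inversion and the boundary case $\eta_{n}\in\{0\}$ or $\eta_{n}+\eta_{n+1}=1$ by a limiting argument---whereas you propose a single splitting fraction and a direct case analysis on the integrality of $w_{n},w_{n+1}$; both organisations work and yours is marginally more economical.
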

\begin{proof}
The proof is given in Section \ref{SCicvsetsplitprf}.
\end{proof}

In the definition of multimodularity, the ordering of 
the components of a vector is crucial.
Accordingly, 
in defining the splitting operation for multimodular sets,
we assume that 
the components of vector $y \in \ZZ\sp{m}$
are ordered naturally, 
first the $m_{1}$ components of $y_{1}$, 
then the $m_{2}$ components of $y_{2}$, etc., 
and finally the $m_{n}$ components of $y_{n}$.

\begin{proposition} \label{PRsplitsetMult}
The splitting of a multimodular set is multimodular
(under the natural ordering of the elements).
\end{proposition}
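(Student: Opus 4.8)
The plan is to sidestep the exchange-type definition of multimodularity altogether and argue directly from the polyhedral description furnished by Theorem~\ref{THmmsetpolydes}. The decisive observation is that, under the natural ordering of the split coordinates, each block $U_{i}$ occupies a \emph{contiguous} range of indices in $\{1,2,\ldots,m\}$, so that a consecutive interval of $N$ pulls back to a consecutive interval of the enlarged ground set. (Transporting the problem instead through the correspondence with \Lnat-convexity of Theorem~\ref{THmmfnlnatfn} would be awkward, since \Lnat-convex sets are \emph{not} closed under splitting; the polyhedral route avoids this difficulty.)

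First I would use Theorem~\ref{THmmsetpolydes} to write the given multimodular set as
\[
 S = \{ x \in \ZZ\sp{n} \mid a_{I} \leq x(I) \leq b_{I} \ \ (I \text{ consecutive in } N) \}
\]
for suitable $a_{I} \in \ZZ \cup \{ -\infty \}$ and $b_{I} \in \ZZ \cup \{ +\infty \}$. Setting $M_{0}=0$ and $M_{i}=m_{1}+\cdots+m_{i}$, the natural ordering places the components of $y_{i}$ in the positions $M_{i-1}+1,\ldots,M_{i}$, and I would identify $U_{i}$ with this index range inside $\{1,\ldots,m\}$. Then for a consecutive interval $I=\{k,k+1,\ldots,l\}\subseteq N$ the union $\widetilde I := \bigcup_{i\in I} U_{i} = \{M_{k-1}+1,\ldots,M_{l}\}$ is again a consecutive interval, now of the new index set $\{1,\ldots,m\}$.

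Next, for $y\in\ZZ\sp{m}$ let $x$ be the vector with $x_{i}=y_{i}(U_{i})$ as in \eqref{splitsetdef}; this $x$ is uniquely determined by $y$, and by definition $y$ lies in the splitting $T$ exactly when $x\in S$. The key identity is $x(I)=\sum_{i\in I} y_{i}(U_{i}) = y(\widetilde I)$ for every consecutive $I$, and substituting it into the description of $S$ gives
\[
 T = \{ y \in \ZZ\sp{m} \mid a_{I} \leq y(\widetilde I) \leq b_{I} \ \ (I \text{ consecutive in } N) \}.
\]
Since each $\widetilde I$ is a consecutive interval of $\{1,\ldots,m\}$, this is a system of box constraints indexed by consecutive intervals of the new ground set; assigning $-\infty$ and $+\infty$ to the bounds of every consecutive interval not of the form $\widetilde I$ puts $T$ in exactly the form required by Theorem~\ref{THmmsetpolydes} in dimension $m$, whence $T$ is multimodular.

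The only point at which the argument can break is the claim that $\widetilde I$ is consecutive, and this is precisely where the natural-ordering hypothesis is indispensable: were the split coordinates interleaved across blocks, a consecutive interval of $N$ would pull back to a scattered index set and Theorem~\ref{THmmsetpolydes} could no longer be invoked. I therefore expect the substantive content to be the contiguity of the blocks; once that is secured, the remainder is a direct rewriting of the defining inequalities, with no genuine combinatorial obstacle to overcome.
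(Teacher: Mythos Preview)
Your proof is correct, but it follows a genuinely different route from the paper's.

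The paper derives Proposition~\ref{PRsplitsetMult} as a special case of the function version (Proposition~\ref{PRsplitfnMult}), which it proves \emph{precisely} through the \Lnat-correspondence of Theorem~\ref{THmmfnlnatfn} that you dismissed as awkward. The resolution of the apparent awkwardness is that an elementary splitting on the multimodular side does \emph{not} become a splitting on the \Lnat\ side: after conjugating by the $D$-matrices, the paper computes $D_{n}^{-1} C\, D_{n+1}$ explicitly and finds that the resulting operation on the \Lnat-convex function $\hat f$ is merely the insertion of a dummy variable that does not affect the function value (the key equation is \eqref{splitMultprf4}). Since introducing an inactive coordinate obviously preserves \Lnat-convexity, the claim follows.

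Your polyhedral argument is cleaner and more self-contained for the set case, and it parallels the paper's own alternative proof of the direct-sum statement in Section~\ref{SCmmsetdirsumprf}. What the paper's approach buys is uniformity: it proves the set and function statements simultaneously. To upgrade your argument to functions one would have to pass through Theorem~\ref{THmmfnargmin}, which (as noted in Remark~\ref{RMmmfndirsumBySet}) needs an extra hypothesis of bounded effective domain or convex-extensibility; the \Lnat\ route avoids that restriction.
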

\begin{proof}
The proof is given in Section \ref{SCmmsplitprf}.
\end{proof}

Other kinds of discrete convexity
are not compatible with the splitting operation.
The splitting of an integer box is not necessarily an integer box.
Similarly,
the splitting of an 
\Lnat-convex
(resp., L-convex, discrete midpoint convex)
set
is not necessarily
\Lnat-convex
(resp., L-convex, discrete midpoint convex).
See Examples \ref{EXpointsplit} and \ref{EXlsetsplit}.

\begin{example} \rm \label{EXpointsplit}
The elementary splitting of a singleton set $S = \{ 0 \}$
is given by $T = \{ (t,-t) \mid t \in \ZZ \}$.
The set $S$ is an integer box but $T$ is not.
Also $S$ is an \Lnat-convex set but $T$ is not.
\end{example}

\begin{example} \rm \label{EXlsetsplit}
The set
$S = \{ x \in \ZZ\sp{2}  \mid x_{1} = x_{2} \}$
is an L-convex set.
The elementary splitting of $S$ at the second component is given by
$T = \{ y \in \ZZ\sp{3}  \mid  y_{1} = y_{2} + y_{3} \}$.
This set is not L-convex since
the vector $y+\vecone$ does not belong to $T$ for $y \in T$.
\end{example}

\subsection{Aggregation}
\label{SCaggrset}

Let 
$\mathcal{P}= \{ N_{1},  N_{2}, \dots , N_{m} \}$
 be a  partition of $N = \{ 1,2, \ldots, n \}$ into disjoint nonempty subsets,
i.e.,
$N = N_{1} \cup N_{2} \cup \dots \cup N_{m}$
and
$N_{i} \cap N_{j} = \emptyset$ for $i \not= j$.
We have $m \leq n$.
For a set $S \subseteq \ZZ\sp{n}$ 
the subset of $\ZZ\sp{m}$ defined by 
\begin{equation} \label{aggrsetdef}
 T = \{ (y_{1}, y_{2}, \dots , y_{m}) \in  \ZZ\sp{m} \mid
   y_{j} = x(N_{j})  \ (j=1,2,\ldots,m), \ x \in S  \}
\end{equation}
is called the {\em aggregation} of $S$ by $\mathcal{P}$.
An aggregation with $m=n-1$
is called an {\em elementary aggregation},
in which 
$|N_{k}|=2$ for some $k$ and $|N_{j}| =1$ for other $j\not= k$.
For example,
$T = \{ (y_{1}, y_{2}) \in  \ZZ\sp{2} \mid 
y_{1} = x_{1},
y_{2} = x_{2} + x_{3}
\mbox{ for some } (x_{1}, x_{2}, x_{3}) \in S  \}$
is an elementary aggregation of $S \subseteq \ZZ\sp{3}$.
Any (general) aggregation can be obtained by repeated applications of elementary aggregations. 
It should be clear that the definition of aggregation by \eqref{aggrsetdef}
is consistent with the definition, given in Introduction,
in terms of the graph in Fig.~\ref{FGbiparmatroid} (c).

It is known that M-convexity  and its relatives are well-behaved 
with respect to the aggregation operation.

\begin{proposition} \label{PRaggrset}
\quad


\noindent
{\rm (1)} 
The aggregation of an integer box is an integer box.

\noindent
{\rm (2)} 
The aggregation of an \Mnat-convex set is \Mnat-convex.

\noindent
{\rm (3)} 
The aggregation of an M-convex set is M-convex.

\noindent
{\rm (4)} 
The aggregation of a simultaneous exchange jump system is a simultaneous exchange jump system.

\noindent
{\rm (5)} 
The aggregation of a constant-parity jump system is a constant-parity jump system.
\end{proposition}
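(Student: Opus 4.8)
The plan is to treat all five parts through the single aggregation map $\phi:\ZZ\sp{n}\to\ZZ\sp{m}$ defined by $\phi(x)_{j}=x(N_{j})$ $(j=1,\dots,m)$, so that $T=\phi(S)$, and to exploit that $\phi$ merely adds coordinates within the disjoint blocks $N_{j}$. Where it helps I would reduce to the elementary case $|N_{k}|=2$, $|N_{j}|=1$ $(j\neq k)$, since a general aggregation is a composition of elementary ones. Part (1) is then immediate: for $S=[a,b]_{\ZZ}$ the blocks involve disjoint coordinates, so $x(N_{j})$ sweeps independently over every integer in $[a(N_{j}),b(N_{j})]$, whence $T=[a',b']_{\ZZ}$ with $a'_{j}=a(N_{j})$ and $b'_{j}=b(N_{j})$ (reading an endpoint as $\pm\infty$ when the corresponding $a_{i}$ or $b_{i}$ is infinite).

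For the \Mnat- and M-convex cases (2),(3) I would avoid the exchange property and instead use the base-polyhedron description: $\overline{S}=B(\rho)$ for an integer-valued submodular $\rho:2\sp{N}\to\ZZ\cup\{+\infty\}$ with $S=B(\rho)\cap\ZZ\sp{n}$. Define $\rho':2\sp{M}\to\ZZ\cup\{+\infty\}$ by $\rho'(J)=\rho(\bigcup_{j\in J}N_{j})$; since the blocks are disjoint, $\bigcup_{j\in J\cup K}N_{j}$ and $\bigcup_{j\in J\cap K}N_{j}$ are the union and intersection of the associated index sets, so submodularity of $\rho$ passes to $\rho'$. One checks $\phi(B(\rho))=B(\rho')$, the inclusion ``$\subseteq$'' being direct from the defining inequalities and ``$\supseteq$'' a feasibility statement that every point of $B(\rho')$ lifts. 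Integrality is the crucial addition: because base polyhedra are box-totally-dual-integral, the fibre $B(\rho)\cap\{x\mid x(N_{j})=y_{j}\ (\forall j)\}$ is an integer polyhedron, so every integer $y\in B(\rho')$ has an integer preimage; hence $\phi(S)=B(\rho')\cap\ZZ\sp{m}$ is M-convex. The \Mnat-convex case (2) then follows by the coordinate-adjunction that is the set analogue of \eqref{mfnmnatfnrelationvex}: lift $S$ to the M-convex set $\widehat S\subseteq\ZZ\sp{1+n}$ with extra coordinate $x_{0}=-x(N)$, aggregate by the partition $\{\{0\}\}\cup\mathcal{P}$ to obtain (by part (3)) the M-convex set $\widehat T=\{(-y(M),y)\mid y\in T\}$, and read off that its projection $T$ is \Mnat-convex.

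For the jump-system cases (4),(5) the base-polyhedron tool is unavailable, and I would argue by a lift--exchange--project scheme on the axioms \mbox{(J$\sp{\natural}$-EXC)} and (J-EXC); note that in case (5) parity constancy of $T$ is immediate from $y(M)=x(N)$. Given $y,y'\in T$ and a $(y,y')$-increment $s=\pm\unitvec{j}$, choose preimages $x,x'\in S$ and lift $s$ to an $(x,x')$-increment $\hat s=\pm\unitvec{i}$ with $i\in N_{j}$: such an $i$ exists because $s$ being an increment forces $x(N_{j})=y_{j}$ to lie strictly on the $x'$ side of the block sum, so some coordinate of $N_{j}$ moves that way. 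Applying the exchange axiom of $S$ to $x,x',\hat s$ returns an increment $\hat t$ with $x+\hat s+\hat t\in S$ and $x'-\hat s-\hat t\in S$; projecting gives $y+s+t\in T$ and $y'-s-t\in T$ with $t=\phi(\hat t)$.

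The hard part will be this final projection: although $\hat t$ is a legitimate $(x+\hat s,x')$-increment, its image $t=\phi(\hat t)$ need not be a legitimate $(y+s,y')$-increment, since a single coordinate of $\hat t$ moving toward $x'$ says nothing about the sign of the whole block sum. To control this I would fix the preimage pair to be extremal, e.g.\ minimizing $\|x-x'\|_{1}$, and prove that under this choice every block $N_{j}$ is \emph{sign-monotone}, i.e.\ $x_{i}-x'_{i}$ has constant sign over $i\in N_{j}$; then a within-block coordinate moving toward $x'$ drags the block sum the same way and $t$ becomes a valid increment. Ruling out a block that contains both $x_{i}>x'_{i}$ and $x_{k}<x'_{k}$ --- by a distance-reducing exchange that remains inside the same $\phi$-fibre --- is where the source exchange axiom must be used most delicately, and is the main technical obstacle; for the jump cases this monotone-fibre reduction is precisely the content worked out in \cite{BouC95,KS05jump,Mmnatjump19}, which I would cite rather than reprove.
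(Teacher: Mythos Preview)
Your treatment of (1)--(3) is correct and matches the standard submodular/polymatroid arguments the paper cites; your reduction of (2) to (3) via the extra $0$-th coordinate is also fine.

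For (4) and (5), however, the paper takes a cleaner and more modular route than your lift--exchange--project scheme.  For (5) the paper does not verify (J-EXC) for $T$ directly; instead it invokes the result of Kabadi--Sridhar \cite{KS05jump} that \emph{general} jump systems (2-step axiom only) are closed under aggregation, and then simply observes that $y(M)=x(N)$ preserves constant parity.  For (4) the paper again avoids a direct check of (J$\sp{\natural}$-EXC): it reduces (4) to (5) via the $(n{+}1)$-variable embedding of an s.e.\ jump system into a c.p.\ jump system (the set analogue of \eqref{ftildeJ}), aggregating with the extra coordinate left in its own block, and reading the s.e.\ property of $T$ back from the c.p.\ property of $\widetilde T$ by specializing \cite[Lemma 4.5]{Mmnatjump19}.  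Both steps sidestep the ``monotone-fibre'' difficulty entirely.

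Your direct approach is plausible, but the sentence ``this monotone-fibre reduction is precisely the content worked out in \cite{BouC95,KS05jump,Mmnatjump19}'' is inaccurate: \cite{BouC95} introduces jump systems and does not treat aggregation, and \cite{KS05jump} proves closure under aggregation by a different argument (for general jump systems, not the c.p.\ or s.e.\ subclasses).  So while you end up citing sources that do establish the needed closure, the narrative about \emph{what} they prove, and hence what remains for you to do, is off.  If you want to keep the sign-monotone strategy you would actually have to supply the fibre argument yourself; the paper's route shows it is unnecessary.
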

\begin{proof}
(1) The aggregation of an integer box 
$\{ x \in \ZZ\sp{n} \mid a_{i} \leq x_{i} \leq b_{i} \  (i=1,2,\ldots,n) \}$ 
is given by 
$\{ y \in \ZZ\sp{m} \mid a(N_{j}) \leq y_{j} \leq b(N_{j}) \  (j=1,2,\ldots,m) \}$,
which is an integer box.
The aggregation operations for M-convex and \Mnat-convex sets
in Parts (2) and (3) 
are well known in polymatroid/submodular function theory 
(see, e.g., \cite[Section 3.1(d)]{Fuj05book}).
The aggregation operation for (general) jump systems was considered by 
Kabadi and Sridhar \cite{KS05jump}.
Part~(5) for constant-parity jump systems
follows from this,
since $\sum_{j =1}\sp{m} y_{j} =  \sum_{i =1}\sp{n} x_{i}$ 
if $y_{j}=x(N_{j})$ $(j=1,2,\ldots,m)$. 
Part~(4) for simultaneous exchange jump systems 
can be derived from Part~(5) for constant-parity jump systems
on the basis of their relation 
\eqref{ftildeJ} in Section~\ref{SCjumpMdef}
by specializing the proof of \cite[Lemma 4.5]{Mmnatjump19}
to indicator functions.
\end{proof}

We point out here that 
other kinds of discrete convexity
are not compatible with the aggregation operation
by presenting counterexamples, as follows.

\begin{itemize}
\item
The aggregation of an integrally convex set
is not necessarily integrally convex
(Example~\ref{EXicvsetaggr}).

\item
The aggregation of an \Lnat-convex set
is not necessarily \Lnat-convex
(Example~\ref{EXlnatsetaggr}).

\item
The aggregation of an L-convex set
is not necessarily L-convex 
(Example~\ref{EXlsetaggr}).

\item
The aggregation of a multimodular set
is not necessarily multimodular
(Example~\ref{EXmmsetaggr}).

\item
The aggregation of a 
discrete midpoint convex set 
is not necessarily 
discrete midpoint convex
(Example~\ref{EXicvsetaggr}).
\end{itemize}

\begin{example} \rm \label{EXicvsetaggr}
The set
\[
S = \{ (0,0,1,0), (0,0,0,1), (1,1,1,0), (1,1,0,1) \}
\]
is an integrally convex set.
For the partition of $N = \{ 1,2,3,4 \}$
into $N_{1} = \{ 1,3 \}$ and $N_{2} = \{ 2,4 \}$,
the aggregation of $S$
by $\{ N_{1}, N_{2} \}$ 
is given by
\[
T = \{ (1,0), (0,1), (2,1), (1,2) \},
\]
which is not integrally convex.
The set $S$ is also 
discrete midpoint convex, but $T$ is not.
\OMIT{
In view of Remark~\ref{RMsetaggruseMinkow},
it may be worth mentioning 
that this example is closely related to
the fact that the Minkowski sum of two integrally convex sets
$S_{1} = \{ (0,0), (1,1) \}$
and
$S_{2} = \{ (1,0), (0,1) \}$
is not integrally convex.
The set $S$ above is the direct sum of these sets
and $T$ is their Minkowski sum,
that is, $S = S_{1} \oplus S_{2}$ and $T = S_{1} +S_{2}$.
}
\end{example}

\begin{example} \rm \label{EXlnatsetaggr}
The set
\begin{equation}  \label{lnatsetaggDim6}
S = \{ (0,0,0,0,0,0), (0,0,0,0,1,1), (1,1,0,0,0,0), (1,1,0,0,1,1) \}
\end{equation}
is an \Lnat-convex set.
For the partition of $N = \{ 1,2, \ldots,6 \}$
into three pairs $N_{1} = \{ 1,4 \}$, $N_{2} = \{ 2,5 \}$, and $N_{3} = \{ 3,6 \}$,
the aggregation of $S$
by $\{ N_{1}, N_{2}, N_{3} \}$
is given by
\[
T  = \{(0, 0, 0), (0, 1, 1), (1, 1, 0), (1, 2, 1)\} ,
\]
which is not \Lnat-convex.
Indeed, for $x=(0, 1, 1)$ and $y=(1, 1, 0)$ in $T$,
we have
$(x+y)/2 = (1/2, 1, 1/2)$,
for which
$\left\lceil (x+y)/2 \right\rceil = (1, 1, 1) \not\in T$,
and
$\left\lfloor (x+y)/2 \right\rfloor = (0, 1, 0) \not\in T$.
Therefore, $T$ is not \Lnat-convex.
\OMIT{
In view of  Remark~\ref{RMsetaggruseMinkow},
it may be worth mentioning 
that this example is closely related to
the fact 
that the Minkowski sum of two \Lnat-convex sets
$S_{1} =  \{(0, 0, 0), (1, 1, 0)\}$
and
$S_{2} =$ $ \{(0, 0, 0)$, $(0, 1, 1)\}$
is not \Lnat-convex.
The set $S$ above is the direct sum of these sets
and $T$ is their Minkowski sum,
that is, $S = S_{1} \oplus S_{2}$ and $T = S_{1} +S_{2}$.
}
\end{example}

\begin{example} \rm \label{EXlsetaggr}
(This is an adaptation of Example~\ref{EXlnatsetaggr} to L-convex sets.)
Let
$S_{1} =  \{(0, 0, 0, 0)+\alpha \vecone, (1, 1, 0, 0)+\alpha \vecone \mid \alpha \in \ZZ \}$
and
$S_{2} =  \{(0, 0, 0, 0)+\alpha \vecone, (0, 1, 1, 0)+\alpha \vecone \mid \alpha \in \ZZ \}$
with $\vecone = (1,1,1,1)$,
and define 
$S = S_{1} \oplus S_{2} \subseteq \ZZ\sp{8}$. 
This set $S$ is L-convex.
For the partition of $N = \{ 1,2, \ldots,8 \}$
into four pairs $N_{j} = \{ j, j+4 \}$ ($j=1,2,3,4)$,
the aggregation of $S$
is given by
\[
T  = \{(0, 0, 0, 0)+\alpha \vecone, (0, 1, 1, 0)+\alpha \vecone, 
       (1, 1, 0, 0)+\alpha \vecone, (1, 2, 1, 0)+\alpha \vecone \mid \alpha \in \ZZ \},
\]
which is not L-convex,
since for the elements 
$x=(0, 1, 1, 0)$ and $y=(1, 1, 0, 0)$ of $T$,
we have
$\left\lceil (x+y)/2 \right\rceil = (1, 1, 1, 0) \not\in T$
and
$\left\lfloor (x+y)/2 \right\rfloor = (0, 1, 0, 0) \not\in T$.
\end{example}

\begin{example} \rm \label{EXmmsetaggr}
Here is an example of the aggregation of multimodular sets.
For the \Lnat-convex set $S$ in \eqref{lnatsetaggDim6}
(Example \ref{EXlnatsetaggr}),
let 
$\tilde S = \{ D x \mid x \in S \}$
be the multimodular set
corresponding to $S$,
where $D$ is the matrix defined in \eqref{matDdef}
in Section \ref{SCmultimod}.
That is,
\[
\tilde S 
 = \{ (0,0,0,0,0,0), (0,0,0,0,1,0), (1,0,-1,0,0,0), (1,0,-1,0,1,0) \}.
\]
For the partition of $N = \{ 1,2, \ldots,6 \}$
into three pairs $N_{1} = \{ 1,4 \}$, $N_{2} = \{ 2,5 \}$, and $N_{3} = \{ 3,6 \}$,
the aggregation of $\tilde S$ 
is given by
\[
\tilde T  = \{(0, 0, 0), (0, 1, 0), (1, 0, -1), (1, 1, -1)\} .
\]
This set $\tilde T$ is not multimodular.
We can check this directly or by detecting 
that the transformed set 
\[
T = \{ D\sp{-1} x \mid x \in \tilde T \}
 = \{(0, 0, 0), (0, 1, 1),  (1, 1, 0), (1, 2, 1)\}
\]
is not \Lnat-convex.
Indeed, $x=(0, 1, 1)$ and $y=(1, 1, 0)$ in $T$,
we have
$\left\lceil (x+y)/2 \right\rceil = (1, 1, 1) \not\in T$
and
$\left\lfloor (x+y)/2 \right\rfloor = (0, 1, 0) \not\in T$.
\end{example}

\begin{remark} \rm \label{RMsetaggruseMinkow}
The
{\em Minkowski sum}
of two sets $S_{1}$, $S_{2} \subseteq \ZZ\sp{n}$ 
means the subset of $\ZZ\sp{n}$ 
defined by
\begin{equation} \label{minkowsumZdef}
S_{1}+S_{2} = 
\{ x + y \mid x \in S_{1}, \  y \in S_{2} \} ,
\end{equation}
which is useful and important in applications.
The Minkowski sum can be realized through a combination
of direct sum and aggregation operations.
We first form their direct sum
$S = S_{1} \oplus S_{2} \subseteq \ZZ\sp{2n}$.
The underlying set of $S$ is the union of two disjoint copies of 
$\{ 1,2,\ldots, n \}$, 
which we denote by
$\{ \psi_{1}(i) \mid i = 1,2,\ldots, n \} 
 \cup \{ \psi_{2}(i) \mid i = 1,2,\ldots, n \}$. 
Consider the partition of this underlying set 
into the pairs $\{ \psi_{1}(i), \psi_{2}(i) \}$ 
of corresponding elements.
Then the aggregation of $S$ coincides with 
the Minkowski sum $S_{1} + S_{2}$.
\end{remark}

\subsection{Transformation by networks}
\label{SCnettransset}

In this section, we consider the transformation of a discrete (convex) set 
through a network. 
Let $G=(V, A; U, W)$ be a directed graph with vertex set $V$, arc set $A$, 
entrance set $U$, and exit set $W$, where $U$ and $W$ are disjoint subsets of $V$
(cf., Fig.~\ref{FGnettransSet}).
For each arc $a\in A$, 
an integer interval $[\ell(a), u(a)]_{\ZZ}$ is given 
as the capacity constraint,
where 
$\ell(a) \in \ZZ \cup \{ -\infty \}$
and
$u(a) \in \ZZ \cup \{ +\infty \}$.

We consider an integral flow $\xi: A \to \ZZ$
that satisfies 
the capacity constraint
on arcs:
\begin{equation} \label{nettransSetcapa}
\ell(a) \leq \xi(a) \leq u(a)
\qquad
(a \in A)
\end{equation}
and the flow-conservation at internal vertices:
\begin{equation} \label{nettransSetconserve}
 \sum_{a : \ a \text{ leaves } v} \xi (a) 
- \sum_{a : \ a \text{ enters } v} \xi (a) = 0
\qquad (v \in V \setminus ( U \cup W) ) .
\end{equation}
For $v \in V$ we use notation
\begin{equation} \label{nettransSetboundary}
\partial \xi (v) = \sum_{a : \ a \text{ leaves } v} \xi (a) 
- \sum_{a : \ a \text{ enters } v} \xi (a) ,
\end{equation}
which means the net flow-supply from outside of the network at vertex $v$.
Accordingly, $\partial \xi \in \ZZ\sp{V}$ is the vector of net supplies.  
The restriction of $\partial \xi$ to $U$ is denoted by
$\partial \xi | U$, that is,
$x = \partial \xi | U$ is a vector with components indexed by $U$
such that
$x(v) = \partial \xi (v)$
for $v \in U$. 
Similarly we define $\partial \xi | W \in \ZZ\sp{W}$.

\begin{figure}\begin{center}
 \includegraphics[width=0.6\textwidth,clip]{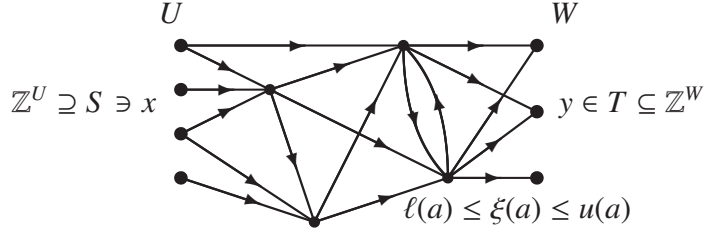}
\caption{Transformation of a discrete convex set by a network}
\label{FGnettransSet}
\end{center}\end{figure}

Given a set $S \subseteq \ZZ\sp{U}$
of integer vectors on the entrance set $U$, 
we consider the set $T \subseteq \ZZ\sp{W}$ 
of integer vectors $y$ on the exit set $W$
for which 
there is a feasible flow $\xi$
such that 
the net supply vector on $U$ belongs to the given set $S$
(i.e., $\partial \xi | U \in S$)
and the net supply vector on $W$ coincides with $-y$ 
(i.e., $\partial \xi | W = -y$).
That is,
\begin{align}
T = \{ y \in \ZZ\sp{W} & \mid  
\mbox{\rm there exists $\xi \in \ZZ\sp{A}$ satisfying 
\eqref{nettransSetcapa},  \eqref{nettransSetconserve}, } 
 \notag \\ & \quad
 \mbox{$\partial \xi | U \in S$, and $\partial \xi | W = -y$ }
\}.
\label{nettransSetdef}
\end{align}
We regard $T$ as a result of {\em transformation} (or {\em induction}) of $S$ by the network. 
It is assumed that $T$ is nonempty.

\OMIT{
\begin{remark} \rm  \label{RMsplaggrminkowNetset}
\memo{Moved to Introduction}
Splitting, aggregation, and the Minkowski sum
can be regarded as special cases of the transformation
by means of bipartite networks,
as shown in Fig.~\ref{FGsplaggrminkow}.
For the Minkowski sum we first make the direct sum, to which aggregation 
is applied (cf., Remark \ref{RMsetaggruseMinkow}).
\end{remark}
} 

It is known that 
M-convexity  and its relatives are well-behaved 
with respect to the network induction.

\begin{theorem} \label{THnetindset}
\quad


\noindent
{\rm (1)} 
The network induction of an \Mnat-convex set is \Mnat-convex.

\noindent
{\rm (2)} 
The network induction of an M-convex set is M-convex.

\noindent
{\rm (3)} 
The network induction of a simultaneous exchange jump system is a simultaneous exchange jump system.

\noindent
{\rm (4)} 
The network induction of a constant-parity jump system is a constant-parity jump system.
\end{theorem}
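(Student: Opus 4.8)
The plan is to express the network induction \eqref{nettransSetdef} as a finite composition of the elementary operations whose behavior is already understood — direct sum, splitting, and aggregation (Propositions \ref{PRsetdirsumG}, \ref{PRsplitsetMMnat}, \ref{PRaggrset}) — together with one further operation, the imposition of the arc-capacity constraint, and then to verify that each of the four classes in question is closed under every one of these building blocks. The decomposition itself is the realization result of \cite{KMT07jump} quoted in the Introduction, by which the transformation by an arbitrary capacitated network is a composition of splittings, aggregations, and such basic operations; so the real task is the (short) bookkeeping showing that each class survives each step.

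To see the shape of the argument I would first treat the bipartite case, which already exhibits all the ingredients. If every arc runs from $U$ to $W$ and the lower capacities are $0$, then the constraint $x_{v} = \sum_{a \text{ leaves } v} \xi(a)$ $(v \in U)$ is precisely a splitting of $S$ into the arc-flow space $\ZZ^{A}$; the capacity bounds $0 \le \xi(a) \le u(a)$ cut this set down by intersection with the integer box $\prod_{a \in A} [0, u(a)]_{\ZZ}$; and reading off $y_{w} = \sum_{a \text{ enters } w} \xi(a)$ $(w \in W)$ is an aggregation. Thus the bipartite induction is $\text{aggregation} \circ (\text{box intersection}) \circ \text{splitting}$. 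The only operation here not covered by an earlier proposition is intersection with an integer box, and this preserves each class by a direct check of the defining axiom: for $x, y$ in the intersection and $i \in \suppp(x-y)$, every exchange vector furnished by the axiom for the ambient set (such as $x - \unitvec{i} + \unitvec{j}$ with $j \in \suppm(x-y)$ for \Mnat-convexity, or the vectors $x + s + t$ and $y - s - t$ of the jump-system axioms) again lies in the box, because a coordinate is decreased only where $x$ strictly exceeds $y$ and increased only where $y$ strictly exceeds $x$, so all such perturbations stay inside $[x \wedge y, x \vee y]$, which is itself contained in the box.

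For a general network the same three operations are used, but now internal vertices carry flow conservation and arcs may be traversed in either direction, and the decomposition of \cite{KMT07jump} threads the flow through splittings and aggregations in an orientation-consistent manner. This orientation bookkeeping is exactly where the difficulty lies: the boundary operator $\partial\xi(v) = \sum_{a \text{ leaves } v}\xi(a) - \sum_{a \text{ enters } v}\xi(a)$ is a signed sum, and negating a single coordinate does \emph{not} preserve \Mnat-convexity (for instance $\{(0,0),(1,0),(0,1)\}$ is \Mnat-convex but $\{(0,0),(-1,0),(0,1)\}$ is not), so one cannot model $\partial\xi$ by an unsigned aggregation; the conservation at an internal vertex must instead be realized by matching total inflow against total outflow through paired splitting/aggregation gadgets. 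Granting the decomposition, and since direct sum, splitting, aggregation, and box intersection each preserve \Mnat-convex sets, M-convex sets, simultaneous exchange jump systems, and constant-parity jump systems, the composition does as well; the standing assumption that $T$ is nonempty guarantees that the intermediate sets are nonempty, so the class memberships are not vacuous. This would establish all four parts simultaneously.
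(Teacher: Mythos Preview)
Your overall plan---realize the network induction as a composition of direct sum, splitting, box intersection, and aggregation, then check closure of each class under each step---is exactly the mechanism of \cite{KMT07jump} and is what the paper invokes (via the function-level Theorem~\ref{THnetindfn}) for Parts~(3) and~(4). For jump systems and constant-parity jump systems the decomposition goes through because those classes are closed under independent coordinate inversion (see \cite[Proposition~4.3]{Msurvop19}), which is one of the ``other simple operations'' in the \cite{KMT07jump} reduction and is what absorbs the signs in $\partial\xi$.

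The gap is in Parts~(1) and~(2). You correctly observe that single-coordinate negation destroys \Mnat-convexity, and the same example shows it destroys M-convexity (the component sum changes). So the decomposition you are citing from \cite{KMT07jump} does \emph{not} stay inside the \Mnat- or M-convex world; your sentence ``the conservation at an internal vertex must instead be realized by matching total inflow against total outflow through paired splitting/aggregation gadgets'' names the obstruction but does not remove it, and ``granting the decomposition'' is precisely the step that fails. The paper's remedy (visible in the proof of Theorem~\ref{THnetindfn}, which specializes to sets) is not to repair the decomposition but to route Parts~(1) and~(2) through Part~(4): an M-convex set is a jump M-convex set with constant-sum effective domain, the network induction of a jump M-convex set is jump M-convex by Part~(4), and the flow-conservation identity $\partial\xi(U)+\partial\xi(W)=0$ forces the induced set to be constant-sum as well, hence M-convex. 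Part~(1) then follows from Part~(2) by the standard device of adjoining a free arc $(u_{0},w_{0})$ between new entrance and exit vertices and using the correspondence \eqref{mfnmnatfnrelationvex} between \Mnat-convex sets on $U$ and M-convex sets on $U\cup\{u_{0}\}$. If you want a self-contained argument for (1) and (2), this embedding-into-jump-systems route is the cleanest fix; alternatively, these two parts are classical polymatroid facts (Remark~\ref{RMnetindset}) and can be cited directly.
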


\begin{remark} \rm \label{RMnetindset}
Here is a supplement to Theorem~\ref{THnetindset}.
These statements are reformulations of known facts 
in matroid/polymatroid/submodular function theory 
(see, e.g., \cite{BouC95,Fuj05book,KS05jump,Sch03}).
Parts~(1) and (2) for \Mnat-convex and M-convex sets
are variants of the statement that an integral polymatroid
(defined in terms of independent sets or bases) 
is transformed to another integral polymatroid
through Menger-type linkings in a given directed graph.
Part~(3) for simultaneous exchange jump systems 
is a special case of \cite[Theorem 4.12]{Mmnatjump19}.
Part~(4) for constant-parity jump systems
is a special case of \cite[Theorem 14]{KMT07jump}.
\end{remark}

In contrast, other kinds of discrete convexity
are not compatible with the network induction.
The network induction of an integer box is not necessarily an integer box.
Similarly,
the network induction of an 
integrally convex
(resp., \Lnat-convex, L-convex, multimodular, discrete midpoint convex)
set
is not necessarily 
integrally convex
(resp., \Lnat-convex, L-convex, multimodular, discrete midpoint convex).
Note that these statements are immediate from 
the corresponding statements for splitting and aggregation
in Sections \ref{SCsplitset} and \ref{SCaggrset},
since the network induction is more general than those operations.


\section{Operations on Discrete Convex Functions}
\label{SCfnope}

In this section we consider operations on discrete convex functions.
The behavior of discrete convex functions 
with respect to the operations discussed below
is summarized in Table~\ref{TBoperation3dcfnZ} in Introduction.

\subsection{Direct sum}
\label{SCdirsumconvfn}

The {\em direct sum} of two functions 
$f_{1}: \ZZ\sp{n_{1}} \to \RR \cup \{ +\infty \}$ and
$f_{2}: \ZZ\sp{n_{2}} \to \RR \cup \{ +\infty \}$
is a function 
$f_{1} \oplus f_{2}: \ZZ\sp{n_{1}+n_{2}} \to \RR \cup \{ +\infty \}$
defined as
\begin{equation} \label{fndirsumdef}
(f_{1} \oplus f_{2})(x,y)= f_{1}(x) + f_{2}(y)
\qquad (x \in \ZZ\sp{n_{1}}, y \in \ZZ\sp{n_{2}}) .
\end{equation}
The effective domain of the direct sum is equal to the
direct sum of
 the effective domains of the given functions, that is,
\begin{equation} \label{fndirsumdom}
 \dom (f_{1} \oplus  f_{2}) = \dom f_{1} \oplus \dom f_{2}.
\end{equation}
For two sets 
$S_{1} \subseteq \ZZ\sp{n_{1}}$ and $S_{2} \subseteq \ZZ\sp{n_{2}}$, 
the direct sum of their indicator functions
$\delta_{S_{1}}$ and $\delta_{S_{2}}$
coincides with 
the indicator function of their direct sum
$S_{1} \oplus S_{2}$,
that is,
\[
\delta_{S_{1}} \oplus  \delta_{S_{2}} = \delta_{S_{1} \oplus S_{2}}.
\]

In most cases it is obvious that the direct sum operation preserves
the discrete convexity in question.
However, this is not the case with 
multimodularity and discrete midpoint convexity.
We have the following proposition for the obvious cases.

\begin{proposition} \label{PRfndirsumG}
The direct sum operation \eqref{fndirsumdef} for functions preserves
separable convexity,
integral convexity,
\Lnat-convexity, L-convexity,
\Mnat-convexity, M-convexity,
and jump \Mnat-convexity, and jump M-convexity.
\end{proposition}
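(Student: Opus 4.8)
The plan is to exploit the fact that the direct sum decouples the variables into two independent coordinate blocks, $B_1 = \{1,\ldots,n_1\}$ carrying $x$ and $B_2 = \{n_1+1,\ldots,n_1+n_2\}$ carrying $y$, so that every defining inequality for $f_1 \oplus f_2$ either splits additively into the corresponding inequalities for $f_1$ and $f_2$, or reduces to a single inequality for whichever of $f_1$, $f_2$ owns the coordinate direction being tested. Separable convexity is immediate, since writing $f_1$ and $f_2$ in the form \eqref{sepfndef} and concatenating the univariate summands exhibits $f_1 \oplus f_2$ in the same form. For \Lnat- and L-convexity I would use that the floor, ceiling, and componentwise max/min operations in \eqref{midptcnvfn} and \eqref{submfn} act coordinatewise, so that for $(x,y), (x',y') \in \dom(f_1 \oplus f_2)$ the discrete midpoint (resp.\ submodularity) inequality for $f_1 \oplus f_2$ is exactly the sum of the inequalities for $f_1$ at $x,x'$ and for $f_2$ at $y,y'$; the translation condition \eqref{shiftlfnZ} for L-convexity passes with constant $r = r_1 + r_2$, since the all-ones vector $\vecone \in \ZZ\sp{n_1+n_2}$ restricts to the all-ones vectors in each block.

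For \Mnat- and M-convexity, and likewise for jump \Mnat- and jump M-convexity, I would argue by the block to which the tested direction belongs. Given $(x,y), (x',y') \in \dom(f_1 \oplus f_2)$ and an index $i \in \suppp((x,y)-(x',y'))$ (or, in the jump case, an increment $s = \pm \unitvec{i}$), the index $i$ lies in exactly one of the two blocks; by symmetry assume $i \in B_1$. Then $i \in \suppp(x-x')$ (resp.\ $s$ restricts to an $(x,x')$-increment), and the second block is untouched by any exchange performed inside the first. Applying the exchange property of $f_1$ produces the required $j \in \suppm(x-x') \cup \{0\}$ (resp.\ the increment $t$), lying in $B_1$ and hence admissible for $f_1 \oplus f_2$; the exchanged points stay in $\dom f_1$ while the $f_2$-argument is unchanged, so both remain in $\dom(f_1 \oplus f_2)$, and since the $f_2$-contribution cancels on the two sides, the exchange inequality \eqref{mnconvexc2Z} (resp.\ \eqref{jumpmexc1}, \eqref{jumpmexc2}) for $f_1 \oplus f_2$ reduces to the one already guaranteed for $f_1$. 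This settles all four M-type classes uniformly.

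The one case needing a genuine preliminary observation is integral convexity, and I expect it to be the main point of the proof. Here I would first establish that the local convex extension factorizes as $\widetilde{f_1 \oplus f_2}(u,v) = \tilde f_1(u) + \tilde f_2(v)$ for all $(u,v) \in \RR\sp{n_1+n_2}$. This rests on the product structure of the integral neighborhood, $N((u,v)) = N(u) \times N(v)$: the inequality ``$\leq$'' follows by taking the product $\lambda_{(p,q)} = \mu_p \nu_q$ of optimal convex-combination weights for $\tilde f_1(u)$ and $\tilde f_2(v)$ in \eqref{fnconvclosureloc2}, and ``$\geq$'' follows by passing to the two marginals $\mu_p = \sum_q \lambda_{(p,q)}$ and $\nu_q = \sum_p \lambda_{(p,q)}$ of an arbitrary feasible weight $\lambda$, which are themselves feasible for $u$ and $v$.

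Granting this factorization, integral convexity of $f_1 \oplus f_2$ follows from the midpoint characterization in Theorem~\ref{THmmtt19ThA1}: for any $(x,y), (x',y') \in \ZZ\sp{n_1+n_2}$,
\begin{equation*}
\widetilde{f_1 \oplus f_2}\Big(\tfrac{(x,y)+(x',y')}{2}\Big)
= \tilde f_1\Big(\tfrac{x+x'}{2}\Big) + \tilde f_2\Big(\tfrac{y+y'}{2}\Big)
\leq \tfrac{1}{2}\big(f_1(x)+f_1(x')\big) + \tfrac{1}{2}\big(f_2(y)+f_2(y')\big),
\end{equation*}
the right-hand side being $\tfrac{1}{2}\big((f_1 \oplus f_2)(x,y) + (f_1 \oplus f_2)(x',y')\big)$, where the single displayed inequality is the sum of the integral-convexity criteria for $f_1$ and for $f_2$ furnished by the same theorem. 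The anticipated difficulty is concentrated entirely in verifying the factorization of the local convex extension; everything else is a routine block decomposition.
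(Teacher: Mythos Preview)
Your argument is correct in every case. The paper itself does not prove this proposition: it is introduced with the remark that ``in most cases it is obvious that the direct sum operation preserves the discrete convexity in question,'' and the statement is then recorded without proof (the nontrivial effort being reserved for the multimodular case in Proposition~\ref{PRfndirsumMult}). So there is no ``paper proof'' to match against; what you have written is precisely the routine verification the paper leaves to the reader.

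Your treatment of the one case that is not a one-line coordinatewise check, integral convexity, is the right one. The product structure $N((u,v)) = N(u) \times N(v)$ indeed gives the factorization $\widetilde{f_1 \oplus f_2}(u,v) = \tilde f_1(u) + \tilde f_2(v)$: the product-of-weights argument yields ``$\leq$'' and the marginal argument yields ``$\geq$'' exactly as you say (the minimum in \eqref{fnconvclosureloc2} is over a compact simplex, so optimal weights exist). Feeding this into Theorem~\ref{THmmtt19ThA1} finishes the job. One could equally well argue directly that $\widetilde{f_1 \oplus f_2}$, being the sum of the convex functions $(u,v)\mapsto \tilde f_1(u)$ and $(u,v)\mapsto \tilde f_2(v)$, is convex on $\RR^{n_1+n_2}$ and hence $f_1\oplus f_2$ is integrally convex by definition; your route via the midpoint criterion is of the same strength and equally clean.
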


Proposition \ref{PRfndirsumMult} below states that 
the direct sum $f_{1} \oplus f_{2}$ 
of two multimodular functions $f_{1}$ and $f_{2}$ 
is also multimodular.
It is noted that 
this is a nontrivial statement,
since the definition of the multimodularity
of $f_{1} \oplus f_{2}$
involves the vector
$\unitvec{i}-\unitvec{i+1}$ for $i=n_{1}$
in \eqref{multimodirection1},
which does not appear in the definitions of 
the multimodularity of $f_{1}$ and $f_{2}$.
Just as for the direct sum of multimodular sets,
it is assumed that the components of $(x,y)$   
in the definition \eqref{fndirsumdef} of 
$f_{1} \oplus f_{2}: \ZZ\sp{n_{1} + n_{2}} \to \RR \cup \{ +\infty \}$
are ordered naturally
with $x_{1},x_{2}, \ldots, x_{n_{1}}$ followed by $y_{1},y_{2}, \ldots, y_{n_{2}}$.

\begin{proposition} \label{PRfndirsumMult}
The direct sum of two multimodular functions is multimodular.
\end{proposition}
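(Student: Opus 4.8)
The plan is to reduce the statement to the additivity behaviour of \Lnat-convex functions through the correspondence of Theorem~\ref{THmmfnlnatfn}. Put $h = f_{1} \oplus f_{2}$ on $\ZZ\sp{n_{1}+n_{2}}$, and let $g_{1}(p)=f_{1}(D_{1}p)$ and $g_{2}(q)=f_{2}(D_{2}q)$ be the \Lnat-convex functions associated with $f_{1}$ and $f_{2}$, where $D_{1}$ and $D_{2}$ are the bidiagonal matrices of the form \eqref{matDdef} of orders $n_{1}$ and $n_{2}$. By Theorem~\ref{THmmfnlnatfn}, $h$ is multimodular if and only if $G(r):=h(Dr)$ is \Lnat-convex, where $D$ is the bidiagonal matrix of order $n_{1}+n_{2}$. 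So the whole task becomes showing that this particular $G$ is \Lnat-convex.

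First I would compute $G$ explicitly. Writing $r=(p,q)$ with $p\in\ZZ\sp{n_{1}}$ and $q\in\ZZ\sp{n_{2}}$, and using $(Dr)_{i}=r_{i}-r_{i-1}$ (with $r_{0}=0$), the first $n_{1}$ components of $Dr$ are exactly $D_{1}p$, while the remaining $n_{2}$ components are $(q_{1}-p_{n_{1}},\, q_{2}-q_{1},\,\ldots,\,q_{n_{2}}-q_{n_{2}-1})$. Applying $f_{1}$ to the first block gives $g_{1}(p)$, and applying $f_{2}$, which satisfies $f_{2}(z)=g_{2}(D_{2}\sp{-1}z)$, to the second block telescopes the partial sums to $q_{k}-p_{n_{1}}$, so that
\[
 G(p,q) = g_{1}(p) + g_{2}(q - p_{n_{1}}\vecone) .
\]
Thus the only coupling between the two blocks enters through the single scalar $p_{n_{1}}$, which is precisely the manifestation in the \Lnat-world of the ``cross'' direction $\unitvec{n_{1}}-\unitvec{n_{1}+1}$ flagged as the nontrivial ingredient.

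The key observation is that the coupling term has exactly the shape of an L-convex lift: defining $G_{2}(q_{0},q):=g_{2}(q-q_{0}\vecone)$, the function $G_{2}$ is the canonical construction by which every \Lnat-convex function arises from an L-convex one (Section~\ref{SCfnclass}), hence it is L-convex and therefore \Lnat-convex, and $g_{2}(q-p_{n_{1}}\vecone)=G_{2}(p_{n_{1}},q)$. Viewed as functions on the full space $\ZZ\sp{n_{1}+n_{2}}$ (ignoring the irrelevant coordinates), both $p\mapsto g_{1}(p)$ and $(p_{n_{1}},q)\mapsto G_{2}(p_{n_{1}},q)$ are \Lnat-convex, since taking a cylinder over unused coordinates trivially preserves the discrete midpoint convexity \eqref{midptcnvfn}. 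Finally, $G$ is the sum of these two \Lnat-convex functions, and \Lnat-convexity is preserved under addition because the inequality \eqref{midptcnvfn} for $G$ is obtained simply by adding the corresponding inequalities for the two summands (its effective domain, $\dom g_{1}$ coupled with $\dom G_{2}$, is nonempty since $\dom f_{1}$ and $\dom f_{2}$ are). Hence $G$ is \Lnat-convex and $h=f_{1}\oplus f_{2}$ is multimodular. I do not expect a genuine obstacle here: the real content is the explicit computation isolating the coupling as $g_{2}(q-p_{n_{1}}\vecone)$ and the recognition that, in contrast to the M-convex side, the L-convex side is closed under both cylinder extension and addition, which is exactly what makes the otherwise worrisome cross direction harmless.
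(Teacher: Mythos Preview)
Your proof is correct and shares the same starting point as the paper: reduce to \Lnat-convexity via Theorem~\ref{THmmfnlnatfn} and compute the transformed function explicitly as $G(p,q)=g_{1}(p)+g_{2}(q-p_{n_{1}}\vecone)$ (the paper derives the equivalent parametrization $r=(p,\,p_{*}\vecone+q)$ in its Lemma~\ref{LMmmfn2}).

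Where the two diverge is in the verification step. The paper checks the discrete midpoint convexity of $G$ directly, doing a parity case analysis on $p_{n_{1}}+p'_{n_{1}}$ (its Lemma~\ref{LMhint}) to sort out how $\lceil\cdot\rceil$ and $\lfloor\cdot\rfloor$ distribute across the shifted second block. You instead recognise that $G$ is the sum of two \Lnat-convex functions after cylinder extension: $g_{1}$ itself and the L-convex lift $G_{2}(q_{0},q)=g_{2}(q-q_{0}\vecone)$, and then invoke closure of \Lnat-convexity under addition. Your route is more modular and avoids the explicit case split; the paper's direct computation is, in effect, an unpacked proof that $G_{2}$ is \Lnat-convex. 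Both are complete, and the underlying content (the parity behaviour of rounding under a uniform shift) is the same.
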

\begin{proof}
The proof is given in  Section \ref{SCmmdirsumprf}.
\end{proof}

In contrast, the direct sum of globally (resp., locally) 
 discrete midpoint convex functions
is not necessarily 
globally (resp., locally) discrete midpoint convex.
This is shown already by Example~\ref{EXdmcsetdirsum},
and the following example 
gives $f_{1}$ and $f_{2}$ that are finite-valued at every integer point.

\begin{example} \rm \label{EXdirsumdmcfn}
Let $f_{1}:\ZZ\sp{2} \to \RR$
and 
$f_{2}:\ZZ \to \RR$ be defined by 
\[
 f_{1}(x_{1},x_{2}) = {x_{1}}\sp{2}  + x_{1} x_{2} + {x_{2}}\sp{2} ,
\qquad
 f_{2}(x_{3}) = 0.
\]
While $f_{1}$ and $f_{2}$ are (globally and locally) discrete midpoint convex,
their direct sum 
\[
g(x_{1},x_{2},x_{3}) = (f_{1} \oplus f_{2})(x_{1},x_{2},x_{3})  
 = {x_{1}}\sp{2}  + x_{1} x_{2} + {x_{2}}\sp{2} 
\]
is not (globally and locally) discrete midpoint convex.
Indeed, for $x = (1,0,0)$, $y = (0,1,2)$, we have
$\| x - y \|_{\infty} = 2$, \
$u=\left\lceil \frac{x+y}{2} \right\rceil = (1,1,1)$,
$v = \left\lfloor \frac{x+y}{2} \right\rfloor = (0,0,1)$, and
$g(x) + g(y) = 1 + 1  < g(u) + g(v) = 3 + 0$.
\end{example}

\subsection{Splitting}
\label{SCsplitfn}

Suppose that we are given a family 
$\{ U_{1},  U_{2}, \dots , U_{n} \}$
of  disjoint nonempty sets
indexed by $N = \{ 1, 2, \dots , n\}$. 
Let $m_{i}= |U_{i}|$ for $i=1,2,\ldots, n$ and define 
$m= \sum_{i=1}\sp{n} m_{i}$, where $m \geq n$.
For a function $f: \ZZ \sp{n} \to \RR \cup \{+\infty\}$, 
the {\em splitting} of $f$ 
by $\{ U_{1},  U_{2}, \dots , U_{n} \}$
is defined as a function
$g: \ZZ \sp{m} \to \RR \cup \{+\infty\}$ given by
\begin{equation} \label{splitfndef}
 g(y_{1}, y_{2}, \dots , y_{n}) 
  = f( y_{1}(U_{1}), y_{2}(U_{2}), \dots , y_{n}(U_{n})), 
\end{equation}
where, for each $i \in N$, $y_{i} = ( y_{ij} \mid  j \in U_{i} )$ is 
an integer vector of dimension $m_{i}$
and $y_{i}(U_{i}) = \sum\{  y_{ij} \mid j \in U_{i} \}$ is 
the component sum of vector  $y_{i} \in \ZZ\sp{m_{i}}$. 
If $m = n+1$
(in which case we have $|U_{k}|=2$ for some $k$ and $|U_{i}| =1$ for other $i\not= k$),
this is called an {\em elementary splitting}.
For example,
$g(y_{1}, y_{2}, y_{3}) = f(y_{1}, y_{2}+ y_{3})$
is an elementary splitting of $f$.
Any (general) splitting can be obtained by repeated applications of elementary splittings. 
It should be clear that the definition of splitting by \eqref{splitfndef}
is consistent with the definition, given in Introduction,
in terms of the graph in Fig.~\ref{FGbiparmatroid} (b).

It is known that 
M-convexity  and its relatives
are well-behaved with respect to the splitting operation.

\begin{proposition} \label{PRsplitfnMMnat}
\quad


\noindent
{\rm (1)} 
The splitting of an \Mnat-convex function is \Mnat-convex.

\noindent
{\rm (2)} 
The splitting of an M-convex function is M-convex.

\noindent
{\rm (3)} 
The splitting of a jump \Mnat-convex function is jump \Mnat-convex.

\noindent
{\rm (4)} 
The splitting of a jump M-convex function is jump M-convex.
\end{proposition}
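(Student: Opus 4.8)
The plan is to reduce everything to an \emph{elementary} splitting and then compose, since any splitting is a repeated application of elementary ones and each of the four classes is preserved at every step. So I would fix an elementary splitting that replaces the last coordinate by two, writing $g(\bar p, p_{n}, p_{n+1}) = f(\bar p,\, p_{n}+p_{n+1})$ with $\bar p = (p_{1},\dots,p_{n-1})$, so that $\{n,n+1\}$ is the only nontrivial split group. For $p,q \in \dom g$ I set $x=(\bar p, p_{n}+p_{n+1})$ and $y=(\bar q, q_{n}+q_{n+1})$, giving $x,y \in \dom f$, $g(p)=f(x)$, $g(q)=f(y)$, and $p(M)=x(N)$, $q(M)=y(N)$. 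This last identity disposes of the structural part at once: $\dom g$ is the set-splitting of $\dom f$, so by Proposition~\ref{PRsplitsetMMnat} it is \Mnat-convex / M-convex / a c.p.\ jump system / an s.e.\ jump system as required, and coordinate-sum parities are preserved.

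For the \Mnat- and M-convex cases (Parts~(1),(2)) I would verify \Mnvex directly. Given $k \in \suppp(p-q)$: if $k<n$ then $k \in \suppp(x-y)$, and I apply \Mnvex for $f$, translating each term $f(x\pm\unitvec{l})$ transparently; the only care needed is when the matched index is the aggregated coordinate $n$, where $x_{n}<y_{n}$ forces some $l'\in\{n,n+1\}$ with $p_{l'}<q_{l'}$ on which to realize the move. The genuinely new situation is $k\in\{n,n+1\}$ with $x_{n}\le y_{n}$: here $p_{k}>q_{k}$ together with $p_{n}+p_{n+1}\le q_{n}+q_{n+1}$ forces the partner index $k'$ to satisfy $p_{k'}<q_{k'}$, and the \emph{within-fiber rebalancing} move $p-\unitvec{k}+\unitvec{k'}$ leaves the aggregate $x$ unchanged, so $g(p-\unitvec{k}+\unitvec{k'})+g(q+\unitvec{k}-\unitvec{k'})=f(x)+f(y)=g(p)+g(q)$ and \Mnvex holds with equality.

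For the jump cases (Parts~(3),(4)) I would run the analogous analysis on an $(p,q)$-increment $s=\sigma\unitvec{k}$. When $k<n$, or when $k\in\{n,n+1\}$ and the projected step $\sigma\unitvec{n}$ is still an $(x,y)$-increment, I apply (J\Mnat-EXC)/(JM-EXC) for $f$ to obtain an increment $\bar t$ and translate it back, again using the aggregate inequality to choose the split coordinate realizing $\bar t$ whenever $\bar t$ touches coordinate $n$: an option-(i) answer for $f$ yields option~(i) for $g$ with the same $s$, and an option-(ii) answer a two-step move for $g$. The main obstacle, and the only place the projection breaks, is exactly when $k$ is a split coordinate and moving $p_{k}$ toward $q_{k}$ moves the aggregate $x_{n}$ \emph{away} from $y_{n}$; but then, as in the M-case, the partner satisfies $p_{k'}$ strictly on the opposite side of $q_{k'}$, so the rebalancing increment $t=-\sigma\unitvec{k'}$ is a valid $(p+s,q)$-increment keeping the aggregate fixed, whence $g(p)+g(q)=g(p+s+t)+g(q-s-t)$ with equality and exchange~(ii) holds. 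Checking that the rebalanced points lie in $\dom g$ (immediate, as they share the aggregate of $p,q$) and that $t$ meets the box/increment constraints finishes the argument; the M- and jump-M variants (as opposed to the \Mnat\ ones) differ only in that option~(i) never arises, which the rebalancing move accommodates automatically.
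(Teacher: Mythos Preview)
Your argument is correct. The reduction to elementary splittings is sound, the case analysis covers all situations, and the key ``rebalancing'' observation---that when the split index $k$ satisfies $p_k>q_k$ but the aggregate satisfies $x_n\le y_n$ (or the analogous sign condition in the jump case), the partner index $k'$ automatically lies in $\suppm(p-q)$ and the fiber-preserving swap realizes the exchange with equality---is exactly what makes the direct verification go through. One small point worth making explicit in the jump case: when you lift an increment $\bar t=\tau\unitvec{n}$ obtained from $f$, you use that $\bar t$ is an $(x+\bar s,y)$-increment, hence moves the aggregate \emph{toward} $y_n$; this rules out $\bar t=-\bar s$ and guarantees the sum $\tau(q_{n}-(p+s)_{n})+\tau(q_{n+1}-(p+s)_{n+1})>0$, so a valid lifted coordinate $l'$ always exists. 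You implicitly rely on this, and it holds.

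The paper takes a different route: it does not give a self-contained argument but invokes prior literature. Part~(4) is attributed to \cite{KMT07jump}; Parts~(1) and~(2) are obtained as special cases of the network-induction theorem for M- and \Mnat-convex functions (splitting being the transformation by the bipartite graph of Fig.~1(b)); Part~(3) is reduced to Part~(4) via the $(n{+}1)$-variable embedding \eqref{ftildeJ}. Your approach is more elementary and uniform---a single exchange-axiom verification handles all four classes without appealing to the heavier network-induction machinery or to the jump-M\,$\leftrightarrow$\,jump-\Mnat\ correspondence---whereas the paper's approach situates the result within the general transformation framework and avoids repeating case analyses already done elsewhere.
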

\begin{remark} \rm \label{RMfnsplitbib}
Here is a supplement to Proposition~\ref{PRsplitfnMMnat}.
The splitting operation for discrete convex functions
is considered explicitly in 
\cite{KMT07jump}
for jump M-convex functions.
This is given in Part (4).
As the splitting operation is a special case of 
the transformation by a bipartite network (cf., 
Remark \ref{RMsplaggrconvolNetfn}),
Parts (1) and (2) 
for \Mnat-convex and M-convex functions
follow from the previous results on the 
network induction 
for \Mnat-convex and M-convex functions
stated in \cite[Theorem 9.26]{Mdcasiam}.
Part (3) for jump \Mnat-convex functions
is derived in \cite{Mmnatjump19}
from (4) for jump M-convex functions.
\end{remark}

The splitting operation has never been investigated 
for integrally convex functions and multimodular functions. 
For integrally convex functions we can show the following.

\begin{proposition} \label{PRsplitfnIC}
The splitting of an integrally convex function is integrally convex.
\end{proposition}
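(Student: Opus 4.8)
The plan is to reduce to the elementary case and then exploit the characterization of integral convexity in Theorem~\ref{THmmtt19ThA1}. Since any splitting is a composition of elementary splittings and integral convexity is invariant under permutation of coordinates, it suffices to treat the elementary splitting of the last coordinate: writing $z=(z',z_{n},z_{n+1})$ with $z'\in\RR\sp{n-1}$ and introducing the aggregation map $\pi(z)=(z',z_{n}+z_{n+1})\in\RR\sp{n}$, I have $g=f\circ\pi$ on $\ZZ\sp{n+1}$. The goal is then to verify, for all $p,q\in\ZZ\sp{n+1}$, the midpoint inequality $\tilde g((p+q)/2)\le\frac{1}{2}(g(p)+g(q))$ of Theorem~\ref{THmmtt19ThA1}.

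The central step is the lemma $\tilde g(z)\le\tilde f(\pi(z))$ for every $z\in\RR\sp{n+1}$, where $\tilde g$ and $\tilde f$ are the local convex extensions defined by \eqref{fnconvclosureloc2}. Granting this, the conclusion is immediate: for $p,q\in\ZZ\sp{n+1}$ put $m=(p+q)/2$, $x=\pi(p)$, $y=\pi(q)\in\ZZ\sp{n}$, so that $\pi(m)=(x+y)/2$; then by the lemma and by Theorem~\ref{THmmtt19ThA1} applied to the integrally convex $f$,
\[
\tilde g(m)\le\tilde f(\pi(m))=\tilde f\Big(\tfrac{x+y}{2}\Big)\le\tfrac{1}{2}\big(f(x)+f(y)\big)=\tfrac{1}{2}\big(g(p)+g(q)\big),
\]
which is exactly the required inequality for $g$.

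To prove the lemma I would take an optimal convex combination $\tilde f(\pi(z))=\sum_{v\in N(\pi(z))}\mu_{v}f(v)$ with $\sum_{v}\mu_{v}v=\pi(z)$ and lift it to a convex combination on $N(z)$. The first $n-1$ coordinates are passive and are kept untouched; only the split coordinate is processed. The point is that the aggregated-coordinate marginal of $\mu$ is forced, being supported on the two integers bracketing $z_{n}+z_{n+1}$ with mean $z_{n}+z_{n+1}$, so I can replace each aggregated value $c$ by a fixed distribution $\eta_{c}$ over the pairs $(a,b)\in N(z_{n},z_{n+1})$ with $a+b=c$; choosing the single free weight in $\eta_{c}$ as $\alpha/(\alpha+\beta)$, where $\alpha,\beta$ are the fractional parts of $z_{n},z_{n+1}$, makes the resulting two-coordinate mean equal to $(z_{n},z_{n+1})$. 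Defining $\lambda_{(v',a,b)}=\mu_{(v',a+b)}\,\eta_{a+b}(a,b)$ then yields a genuine convex combination with mean $z$ whose $g$-value equals $\sum_{v'}\sum_{c}\mu_{(v',c)}f(v',c)=\tilde f(\pi(z))$, because $g(v',a,b)=f(v',a+b)$ collapses the lift.

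I expect the construction of this lift to be the main obstacle. The difficulty is that $\pi$ need not map $N(z)$ into $N(\pi(z))$: when both $z_{n}$ and $z_{n+1}$ are non-integral, a corner of the $2\times2$ box over the split coordinate sums to an integer two away from $\lfloor z_{n}+z_{n+1}\rfloor$, so a naive push-forward leaves $N(\pi(z))$. The transport $\eta_{c}$ is designed precisely to avoid these ``bad'' corners while still matching both the prescribed marginal in the aggregated coordinate and the correct means $z_{n},z_{n+1}$ in the two split coordinates; verifying simultaneously the nonnegativity of the weights, the mean condition in all coordinates, and the value identity is the only real computation, with the case $\alpha+\beta\ge1$ handled symmetrically by transporting onto the upper pair of integers and the degenerate integral cases being trivial.
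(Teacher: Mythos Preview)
Your proposal is correct and follows essentially the same route as the paper: reduce to an elementary splitting, invoke the midpoint characterization of Theorem~\ref{THmmtt19ThA1}, and establish the key inequality $\tilde g(z)\le\tilde f(\pi(z))$ by lifting an optimal convex combination for $\tilde f(\pi(z))$ to one for $\tilde g(z)$ supported on $N(z)$. Your explicit choice of weight $\alpha/(\alpha+\beta)$ in the split coordinate is in fact a slight streamlining of the paper's argument, which instead appeals to feasibility of a small transportation problem (solved by the north-west corner rule) to produce the same lifted coefficients; the case split according to $\alpha+\beta\lessgtr1$ and the degenerate boundary cases match the paper's Cases~1--3.
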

\begin{proof}
The proof is given in Section \ref{SCicvfnsplitprf}.
\end{proof}

In the definition of multimodularity, the ordering of 
the components of a vector is crucial.
Accordingly, 
in defining the splitting operation for multimodular functions,
we assume that the components of vector $y \in \ZZ\sp{m}$
are ordered naturally, 
first the $m_{1}$ components of $y_{1}$, 
then the $m_{2}$ components of $y_{2}$, etc., 
and finally the $m_{n}$ components of $y_{n}$.

\begin{proposition} \label{PRsplitfnMult}
The splitting of a multimodular function is multimodular
(under the natural ordering of the elements).
\end{proposition}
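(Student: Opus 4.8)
The plan is to reduce the claim to the (essentially trivial) corresponding statement for \Lnat-convex functions via the correspondence in Theorem~\ref{THmmfnlnatfn}. Write $f: \ZZ\sp{n} \to \RR \cup \{ +\infty \}$ for the given multimodular function and $g: \ZZ\sp{m} \to \RR \cup \{ +\infty \}$ for its splitting. First I would introduce the \Lnat-convex function $h(p) = f(Dp)$ associated with $f$, where $D$ is the $n \times n$ bidiagonal matrix of \eqref{matDdef}; by \eqref{mmfnFbyG} this means $f(x) = h(p)$ whenever $p_{i} = x_{1} + \cdots + x_{i}$. By Theorem~\ref{THmmfnlnatfn} applied in dimension $m$, it suffices to prove that $\hat{g}(q) = g(Dq)$, with $D$ now the $m \times m$ bidiagonal matrix, is \Lnat-convex.

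The main computation is to identify $\hat{g}$ explicitly. Set $M_{0} = 0$ and $M_{i} = m_{1} + \cdots + m_{i}$, so that under the natural ordering the $i$th block $y_{i}$ of the split vector occupies the consecutive coordinates $M_{i-1}+1, \ldots, M_{i}$. Writing $y = Dq$, i.e.\ $y_{k} = q_{k} - q_{k-1}$ with $q_{0} = 0$, a telescoping sum gives $y_{i}(U_{i}) = \sum_{k=M_{i-1}+1}\sp{M_{i}} (q_{k} - q_{k-1}) = q_{M_{i}} - q_{M_{i-1}}$. Feeding these block sums into $f$ and passing to cumulative sums (telescoping once more) yields $\hat{g}(q) = f(y_{1}(U_{1}), \ldots, y_{n}(U_{n})) = h(q_{M_{1}}, q_{M_{2}}, \ldots, q_{M_{n}})$. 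Thus, after the $D$-transform, the splitting operation becomes nothing more than the insertion of dummy variables: $\hat{g}$ depends on $q \in \ZZ\sp{m}$ only through the $n$ boundary coordinates $q_{M_{1}}, \ldots, q_{M_{n}}$, on which it coincides with $h$.

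Finally I would check that adjoining dummy variables preserves \Lnat-convexity. For $q, q' \in \ZZ\sp{m}$ put $r = (q_{M_{1}}, \ldots, q_{M_{n}})$ and $r' = (q'_{M_{1}}, \ldots, q'_{M_{n}})$. Since $\lceil \cdot \rceil$ and $\lfloor \cdot \rfloor$ act coordinatewise, the boundary coordinates of $\lceil (q+q')/2 \rceil$ and $\lfloor (q+q')/2 \rfloor$ are exactly the components of $\lceil (r+r')/2 \rceil$ and $\lfloor (r+r')/2 \rfloor$. Hence the discrete midpoint inequality \eqref{midptcnvfn} for $\hat{g}$ reduces verbatim to the same inequality for $h$, which holds because $h$ is \Lnat-convex. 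Therefore $\hat{g}$ is \Lnat-convex, and by Theorem~\ref{THmmfnlnatfn} the splitting $g$ is multimodular.

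The only delicate point is the bookkeeping of the two telescoping sums that turn the cumulative-sum transform of the splitting into the boundary-coordinate restriction of $h$; once the identity $\hat{g}(q) = h(q_{M_{1}}, \ldots, q_{M_{n}})$ is established, the \Lnat-convexity of $\hat{g}$ is immediate. I do not anticipate a genuine obstacle, since it is precisely the natural ordering of the components that makes each block occupy a consecutive index range and keeps the telescoping clean; a non-consecutive ordering would break this structure and is exactly why the ordering hypothesis is needed.
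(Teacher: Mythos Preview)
Your proof is correct and follows essentially the same approach as the paper: transform to the \Lnat-convex setting via Theorem~\ref{THmmfnlnatfn}, observe that under the cumulative-sum transform the splitting becomes the insertion of dummy variables (the paper makes this same observation explicitly), and conclude by the trivial fact that adjoining dummy variables preserves \Lnat-convexity. The only cosmetic difference is that the paper works with an elementary splitting (one variable into two) and appeals to iteration, whereas you handle the general splitting in one shot via the telescoping identity $\hat{g}(q) = h(q_{M_{1}}, \ldots, q_{M_{n}})$; both arrive at the same key identity.
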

\begin{proof}
The proof is given in Section \ref{SCmmsplitprf}.
\end{proof}

In contrast, L-convexity and its relatives 
are not compatible with the splitting operation.
That is, the splitting operation does not preserve
separable convexity,
\Lnat-convexity, L-convexity,
and (global, local)
discrete midpoint convexity.
This is immediate from 
the corresponding statements for the splitting of sets
in Section \ref{SCsplitset}.

\subsection{Aggregation}
\label{SCaggrfn}

Let 
$\mathcal{P} =  \{ N_{1},  N_{2}, \dots , N_{m} \}$
 be a  partition of $N = \{ 1,2, \ldots, n \}$ into disjoint (nonempty) subsets,
i.e.,
$N = N_{1} \cup N_{2} \cup \dots \cup N_{m}$
and
$N_{i} \cap N_{j} = \emptyset$ for $i \not= j$.
We have $m \leq n$.
For a function $f: \ZZ \sp{n} \to \RR \cup \{+\infty \}$, 
the {\em aggregation} of $f$ with respect to $\mathcal{P}$
is the function 
$g : \ZZ \sp{m} \to \RR \cup \{+\infty, -\infty\}$ 
defined by 
\begin{equation} \label{aggrfndef}
g(y_{1}, y_{2}, \dots , y_{m}) 
= \inf \{ f(x)   \mid x(N_{j}) = y_{j} \ (j=1,2,\ldots,m) \} ,
\end{equation}
where $y_{j} \in \ZZ$ for $j=1,2,\ldots,m$.
If $m = n-1$
(in which case we have $|N_{k}|=2$ for some $k$ and $|N_{j}| =1$ for other $j\not= k$),
 this is called an {\em elementary aggregation}.
For example,
$g(y_{1}, y_{2}) = \inf \{ f(x_{1}, x_{2}, x_{3}) 
   \mid x_{1} = y_{1}, \ x_{2} + x_{3} = y_{2} \}$
is an elementary aggregation of $f$.
Any (general) aggregation can be obtained by repeated applications of elementary aggregations. 
It should be clear that the definition of aggregation by \eqref{aggrfndef}
is consistent with the definition, given in Introduction,
in terms of the graph in Fig.~\ref{FGbiparmatroid} (c).

It is known that M-convexity  and its relatives are well-behaved 
with respect to the aggregation operation.

\begin{proposition} \label{PRaggrfnMMnat}
\quad


\noindent
{\rm (1)} 
The aggregation of a separable convex function is separable convex.

\noindent
{\rm (2)} 
The aggregation of an \Mnat-convex function is \Mnat-convex.

\noindent
{\rm (3)} 
The aggregation of an M-convex function is M-convex.

\noindent
{\rm (4)} 
The aggregation of a jump \Mnat-convex function is jump \Mnat-convex.

\noindent
{\rm (5)} 
The aggregation of a jump M-convex function is jump M-convex.
\end{proposition}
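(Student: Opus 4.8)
The plan is to first reduce the general aggregation to an \emph{elementary} one, since any aggregation is a composition of elementary aggregations; so throughout I may assume that a single pair $N_{k}=\{k,k'\}$ is merged while every other block is a singleton. Part~(1) I would then settle by direct computation. Writing $f(x)=\sum_{i}\varphi_{i}(x_{i})$ and imposing $x_{k}+x_{k'}=y_{k}$, the aggregation splits as
\begin{equation*}
g(y) = \sum_{i \ne k,k'} \varphi_{i}(y_{i})
 + \inf\{ \varphi_{k}(s) + \varphi_{k'}(t) \mid s+t = y_{k} \},
\end{equation*}
so the only nontrivial term is the infimal convolution $\varphi_{k} \conv \varphi_{k'}$ of two univariate discrete convex functions, which is again univariate discrete convex. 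Hence $g$ is separable convex (assuming, as throughout, that the infima are finite so that $g$ is proper).

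For Parts~(2) and (3) I would argue through the minimizer characterization. Given $c \in \RR\sp{m}$, let $\tilde{c} \in \RR\sp{n}$ be its expansion defined by $\tilde{c}_{i}=c_{j}$ for $i \in N_{j}$; then $\sum_{j} c_{j}\,x(N_{j}) = \sum_{i}\tilde{c}_{i}\,x_{i}$, so that $g[-c]$ is exactly the aggregation of $f[-\tilde{c}]$. A short manipulation of the two nested infima shows that $y \in \argmin g[-c]$ precisely when some $x \in \argmin f[-\tilde{c}]$ satisfies $x(N_{j})=y_{j}$ for all $j$; that is, $\argmin g[-c]$ is the set-aggregation of $\argmin f[-\tilde{c}]$. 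Since $f$ is \Mnat-convex (resp.\ M-convex), $\argmin f[-\tilde{c}]$ is an \Mnat-convex (resp.\ M-convex) set by the only-if part of Theorem~\ref{THfnargminG}, and Proposition~\ref{PRaggrset}(2) (resp.\ (3)) makes its aggregation \Mnat-convex (resp.\ M-convex). The if-part of Theorem~\ref{THfnargminG} then gives the conclusion, the boundedness/convex-extensibility hypothesis on $g$ being met by the finite-box reduction already used in proving that theorem. Equivalently, (2) and (3) are the special case of the network-induction theorem \cite[Theorem 9.26]{Mdcasiam} in which the network is the zero-cost type-(c) bipartite graph of Fig.~\ref{FGbiparmatroid}.

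The jump cases are where this strategy breaks down, and I expect them to be the main obstacle: Example~\ref{EXjumpMfnargmindim2} shows that jump M-convexity is \emph{not} characterized by the convexity of its minimizer sets, so the uniform argument of Parts~(2),(3) cannot be reused. For Part~(5) I would instead invoke the network-induction result for jump M-convex functions \cite{KMT07jump}, of which aggregation is exactly the special case described above; the self-contained alternative is to verify (JM-EXC) for $g$ directly by lifting $y,y'\in\dom g$ to minimizing $x,x'\in\dom f$ and tracking how an $(x,x')$-increment descends to a $(y,y')$-increment, which is the genuinely delicate bookkeeping.

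Finally, Part~(4) I would reduce to Part~(5) through the lifting \eqref{ftildeJ}. Aggregating the jump M-convex function $\tilde{f}$ on $\ZZ\sp{n+1}$ by the refined partition $\{\{0\}, N_{1}, \dots, N_{m}\}$, with the parity coordinate kept as a singleton, yields exactly $\tilde{g}$: this is because aggregation preserves the component sum, whence $y(M)=x(N)$ and $\pi(y)=\pi(x)$, so the side condition $x_{0}=\pi(x)$ transports to $y_{0}=\pi(y)$. Therefore $\tilde{g}$ is jump M-convex by Part~(5), which by \eqref{ftildeJ} is equivalent to $g$ being jump \Mnat-convex. This mirrors the derivation of Proposition~\ref{PRaggrset}(4) from (5) at the level of indicator functions.
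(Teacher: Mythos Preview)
Your Parts~(1) and~(4) match the paper: the same convolution-of-univariate-convex computation for (1), and the same reduction to (5) through the parity lifting \eqref{ftildeJ} for (4).

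For Parts~(2) and~(3) the paper simply cites \cite[Theorems 6.13, 6.15]{Mdcasiam}, whereas you supply an actual argument via the minimizer characterization (Theorem~\ref{THfnargminG}). This is essentially the ``first proof'' the paper later alludes to when proving Theorem~\ref{THnetindfn}(2), so the route is legitimate. There is, however, a technical gap: invoking the if-part of Theorem~\ref{THfnargminG} for $g$ requires $g$ to be convex-extensible or to have bounded domain, and your appeal to ``the finite-box reduction already used in proving that theorem'' does not close this, since that reduction (the identity \eqref{argminfcabfabc}) itself \emph{assumes} convex-extensibility. The correct fix is to argue separately that $g$ inherits convex-extensibility from $f$; for \Mnat-convex $f$ this holds because the real and integer infima over the fibre $\{x : x(N_{j})=y_{j}\}$ coincide (a standard integrality property of \Mnat-convex minimization), but that step needs to be stated rather than waved at.

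For Part~(5) your proposal to derive aggregation from the network-induction result of \cite{KMT07jump} is circular in the logical structure of both \cite{KMT07jump} and the present paper: aggregation is proved \emph{first} there, by the direct (JM-EXC) verification you call ``delicate bookkeeping'', and network induction (Theorem~\ref{THnetindfn}(4)) is then assembled from aggregation together with splitting and a few simple operations. So aggregation is the primitive, not a consequence; your ``self-contained alternative'' is in fact the only route available, and it is precisely the long proof that \cite{KMT07jump} carries out.
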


\begin{remark} \rm \label{RMfnaggrbib}
Here is a supplement to Proposition~\ref{PRaggrfnMMnat}.
The aggregation of a separable convex function
$\sum_{i=1}\sp{n} \varphi_{i}(x_{i})$ 
is given by a separable convex function
$\sum_{j=1}\sp{m} \psi_{j}(y_{j})$
with 
$\psi_{j}(y_{j}) = \inf \{ \sum_{i \in N_{j}} \varphi_{i}(x_{i})  \mid x(N_{j}) = y_{j} \}$,
where $y_{j} \in \ZZ$ for $j=1,2,\ldots,m$.
The aggregation operations for M-convex and \Mnat-convex functions
in (2) and (3) 
are given in 
\cite[Theorem 6.13 ]{Mdcasiam}
and
\cite[Theorem 6.15]{Mdcasiam},
respectively. 
Part~(5) for jump M-convex functions
is established in \cite{KMT07jump}
by a long proof.
Part~(4) for jump \Mnat-convex functions 
is derived 
in \cite{Mmnatjump19}
from (5) for jump M-convex functions.
\end{remark}

In contrast, other kinds of discrete convexity
are not compatible with the aggregation operation.
That is, the aggregation operation does not preserve
integral convexity,
\Lnat-convexity,
L-convexity,
multimodularity,
and 
(global, local)
discrete midpoint convexity.
This is immediate from 
the corresponding statements for the aggregation of sets
in Section \ref{SCaggrset}.

\begin{remark} \rm \label{RMfbaggruseConvol}
The convolution of two functions can be
realized through a combination
of direct sum and aggregation operations. 
We recall that the (infimal) {\em convolution} of two functions
$f_{1}, f_{2}: \ZZ\sp{n} \to \RR \cup \{ +\infty \}$
is defined by
\begin{equation} \label{f1f2convdef}
(f_{1} \conv f_{2})(x) =
 \inf\{ f_{1}(y) + f_{2}(z) \mid x= y + z, \  y, z \in \ZZ\sp{n}  \}
\qquad (x \in \ZZ\sp{n}) ,
\end{equation}
where it is assumed that the infimum 
is bounded from below (i.e., $(f_{1} \conv f_{2})(x) > -\infty$
for every $x \in \ZZ\sp{n}$).
For the given functions
$f_{1}$ and $f_{2}$
we first form their direct sum
\[
f(x_{1}, x_{2}) = f_{1}(x_{1}) + f_{2}(x_{2}), 
\]
where $x_{1}, x_{2} \in \ZZ\sp{n}$. 
The underlying set of $f$ is the union of two disjoint copies of 
$\{ 1,2,\ldots, n \}$, 
which we denote by
$\{ \psi_{1}(i) \mid i = 1,2,\ldots, n \} 
 \cup \{ \psi_{2}(i) \mid i = 1,2,\ldots, n \}$. 
Consider the partition of this underlying set 
into the pairs $\{ \psi_{1}(i), \psi_{2}(i) \}$ 
of corresponding elements.
Then the aggregation of $f$ coincides with 
the convolution $f_{1} \Box f_{2}$.
\end{remark}

\subsection{Transformation by networks}
\label{SCnettransfn}

In this section, we consider the transformation of a discrete (convex) function 
through a network. 
As in Section \ref{SCnettransset},
let $G=(V, A; U, W)$ be a directed graph with vertex set $V$, arc set $A$, 
entrance set $U$, and exit set $W$, where $U$ and $W$ are disjoint subsets of $V$
(cf., Fig.~\ref{FGnettransSet}).
For each arc $a\in A$, 
an integer interval $[\ell(a), u(a)]_{\ZZ}$ is given 
as the capacity constraint,
where 
$\ell(a) \in \ZZ \cup \{ -\infty \}$
and
$u(a) \in \ZZ \cup \{ +\infty \}$.
We consider an integral flow $\xi: A \to \ZZ$
that satisfies 
the capacity constraint 
\eqref{nettransSetcapa} on arcs
and the flow-conservation \eqref{nettransSetconserve} at internal vertices.
Recall notations
$\partial \xi \in \ZZ\sp{V}$, 
$\partial \xi | U \in \ZZ\sp{U}$, and $\partial \xi | W \in \ZZ\sp{W}$.

In addition, we assume that 
the cost of integer-flow $\xi$
is measured 
in each arc $a\in A$ 
in terms of a function 
$\varphi_a : \ZZ \to \RR \cup \{ +\infty \}$,
where $\dom \varphi_a = [\ell(a), u(a)]_{\ZZ}$
and $\varphi_a$ is (discrete) convex in the sense that
\begin{equation}  \label{arccostconv}
\varphi_{a}(t-1) + \varphi_{a}(t+1) \geq 2 \varphi_{a}(t)
\qquad (t \in \ZZ).
\end{equation}

Suppose we are given a function $f : \ZZ\sp{U} \to \RR \cup \{ +\infty \}$ associated 
with the entrance set $U$.
For each vector $y \in \ZZ\sp{W}$ on the exit set $W$, 
we define a function $g(y)$ as the minimum cost of a flow $\xi$
to meet the demand specification $\partial \xi | W = -y$ at the exit, 
where the cost of flow $\xi$ consists of two parts, 
the production cost $f(x)$
of $x = \partial \xi | U$ 
at the entrance and 
the transportation cost $\sum_{a\in A} \varphi_a (\xi (a))$  
at arcs; 
the sum of these is to be minimized over varying supply $x$ and flow $\xi$ subject to 
the supply-demand constraints
$\partial \xi | U = x$ and $\partial \xi | W = -y$
as well as 
the flow conservation constraint
\eqref{nettransSetconserve} at internal vertices.
That is, $g : \ZZ\sp{W} \to \RR \cup \{ + \infty, -\infty \}$
is defined as 
\begin{align}
g(y) = \inf_{x,\,  \xi} & \big\{ f(x) + \sum_{a\in A} \varphi_a (\xi (a)) 
\mid 
\mbox{\rm  $x \in \ZZ\sp{U}$ and $\xi \in \ZZ\sp{A}$  satisfy 
\eqref{nettransSetcapa},  \eqref{nettransSetconserve}, } 
 \notag \\ & \ \  
 \mbox{$\partial \xi | U = x$, and $\partial \xi | W = -y$ }
\}
\qquad \qquad (y \in \ZZ\sp{W}), 
\label{nettransfndef}
\end{align}
where $g(y) = + \infty$ if no such $(x ,\xi)$ exists.
It is assumed that 
the effective domain $\dom g$ is nonempty and that
the infimum is bounded from below 
(i.e., $g(y) > -\infty$ for every $y \in \ZZ\sp{W}$).
We regard $g$ as a result of {\em transformation} (or {\em induction}) of $f$ by the network.

\begin{remark} \rm  \label{RMsplaggrconvolNetfn}
Splitting, aggregation, and convolution 
can be regarded as special cases of the transformation
by means of bipartite networks
(cf., Fig.~\ref{FGbiparmatroid}).
For the convolution  
we use the bipartite graph (d) in Fig.~\ref{FGbiparmatroid}.
\end{remark}

It is known that M-convexity  and its relatives are well-behaved 
with respect to the network induction.

\begin{theorem} \label{THnetindfn}
\quad


\noindent
{\rm (1)} 
The network induction of an \Mnat-convex function is \Mnat-convex.

\noindent
{\rm (2)} 
The network induction of an M-convex function is M-convex.

\noindent
{\rm (3)} 
The network induction of a jump \Mnat-convex function is jump \Mnat-convex.

\noindent
{\rm (4)} 
The network induction of a jump M-convex function is jump M-convex.
\end{theorem}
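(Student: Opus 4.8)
The plan is to treat parts (2) and (4) as the substantive statements and to obtain (1) and (3) from them through the standard lift of an \Mnat-type function to an M-type function in one extra coordinate, i.e.\ via \eqref{mfnmnatfnrelationvex} for M-convexity and \eqref{ftildeJ} for jump M-convexity.

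For the reduction within each pair I would first record the following elementary fact about any feasible flow $\xi$ in the network: summing the boundary \eqref{nettransSetboundary} over all vertices gives $\sum_{v \in V} \partial\xi(v) = 0$, so that $\partial\xi|U = x$, $\partial\xi|W = -y$, and flow conservation at internal vertices together force $x(U) = y(W)$. Consequently the lifting coordinate is automatically matched: with $x_{0} = -x(U)$ and $y_{0} = -y(W)$ (or $x_{0} = \pi(x)$ and $y_{0} = \pi(y)$ in the jump case, since $x(U) = y(W)$ implies $\pi(x) = \pi(y)$), one has $x_{0} = y_{0}$. I would therefore augment the network by a single entrance $0$ carrying $x_{0}$, a single exit $0'$ carrying $y_{0}$, and one zero-cost arc from $0$ to $0'$; feeding the lift $\tilde f$ into this augmented network produces exactly the lift $\tilde g$ of $g$ on the exits. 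Granting this compatibility, $g$ is \Mnat-convex (resp.\ jump \Mnat-convex) precisely because $\tilde g$ is M-convex (resp.\ jump M-convex) by part (2) (resp.\ part (4)); this is the same device by which part (4) of Proposition~\ref{PRaggrfnMMnat} is obtained from part (5) in \cite{Mmnatjump19}.

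For parts (2) and (4) themselves, the approach is a direct verification of the exchange property of $g$ by a min-cost flow argument. Given $y, y' \in \dom g$ with optimal flows $\xi, \xi'$ attaining $g(y), g(y')$, and an index $i \in \suppp(y - y')$, I would inspect the flow difference $\xi - \xi'$ on the residual network, decompose it into paths and cycles, and use an augmenting path that shifts one unit of net supply from the $y$-flow to the $y'$-flow so as to realize a one-step exchange at the exits; the cost change is controlled by the convexity \eqref{arccostconv} of the arc costs along the path together with the exchange property of $f$ at the entrance, where the path meets $U$. This is exactly where the genuine difficulty lies, and it is not a formal consequence of the direct-sum, splitting, and aggregation results of Sections~\ref{SCdirsumconvfn}--\ref{SCaggrfn}: the vertex boundary operator combines outgoing flows with a $+1$ sign and incoming flows with a $-1$ sign, whereas aggregation produces only $+1$ combinations, and for M-convex functions a single coordinate reflection is not permissible; moreover the domain of the natural ``big'' function $f(x) + \sum_{a} \varphi_{a}(\xi(a))$ is a box rather than a constant-sum set, so it need not be M-convex even though $g$ is. Handling the signed incidence together with flow conservation in a general directed graph, possibly containing directed cycles, is the technical core carried out for M-convex functions in \cite[Theorem~9.26]{Mdcasiam} (originating in \cite{Mstein96}) and, by a longer argument, for jump M-convex functions in \cite{KMT07jump}; I would invoke these and reserve the new work for the lifting lemma of the previous paragraph.
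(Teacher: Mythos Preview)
Your lifting reduction for part~(1) matches the paper exactly: the paper also augments the network by a new entrance $u_{0}$, a new exit $w_{0}$, and a zero-cost arc $(u_{0},w_{0})$, and observes that the induced $\tilde g$ is the M-convex lift of $g$, so that part~(2) applied to $\tilde f$ gives the claim. Your observation $x(U)=y(W)$ is precisely the mechanism, and the paper records it in the derivation of (2) from (4).

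The main discrepancy is in your description of part~(4). You assert that the jump M-convex case is \emph{not} a formal consequence of the direct-sum, splitting, and aggregation results of Sections~\ref{SCdirsumconvfn}--\ref{SCaggrfn}, and you sketch instead an augmenting-path argument on the difference flow $\xi-\xi'$. The paper says the opposite: the proof in \cite{KMT07jump} \emph{is} obtained by reducing a general network induction to a composition of splitting, aggregation, independent coordinate inversion, restriction, and addition of a separable convex function (Propositions~\ref{PRsplitfnMMnat}, \ref{PRaggrfnMMnat} together with \cite[Propositions~4.3, 4.9, 4.14]{Msurvop19}). Your objection about the $\pm 1$ signs in $\partial\xi$ is exactly what independent coordinate inversion handles---and jump M-convexity, unlike M-convexity, \emph{is} closed under inversion of a single coordinate. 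So your diagnosis of where the difficulty lies is off, though your ultimate invocation of \cite{KMT07jump} is fine.

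For part~(3) the paper does not lift to (4) but instead adapts the operations-based proof of (4) directly, noting that jump \Mnat-convexity is likewise preserved under splitting, aggregation, coordinate inversion, restriction, and separable addition. Your lifting route is also valid and arguably cleaner. Finally, the paper records a third route to (2): since an M-convex function is a jump M-convex function with constant-sum domain, and $\partial\xi(U)+\partial\xi(W)=0$ forces $\dom g$ to be constant-sum whenever $\dom f$ is, part~(2) follows from part~(4) directly; you might mention this as it parallels your reduction of (1) to (2).
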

\begin{proof} 
(4)
The proof for jump M-convex functions, given in \cite{KMT07jump},
is based on splitting and aggregation
(Propositions \ref{PRsplitfnMMnat} and \ref{PRaggrfnMMnat}),
and other simple operations
such as independent coordinate inversion,
restriction, and
addition of a separable convex function
treated in \cite[Propositions 4.3, 4.9, 4.14]{Msurvop19}.

(3) The proof for jump \Mnat-convex functions can be obtained as an adaptation
of the proof for jump M-convex functions,
as pointed out in \cite{Mmnatjump19}.
This is possible since 
splitting and aggregation
are allowed also for jump \Mnat-convex functions
by Propositions \ref{PRsplitfnMMnat} and \ref{PRaggrfnMMnat},
as well as
independent coordinate inversion,
restriction, and
addition of a separable convex function
(cf., \cite[Propositions 4.3, 4.9, 4.14]{Msurvop19}).

(2) Two kinds of proofs are known for M-convex functions. 
The first proof \cite{Mstein96}
uses a dual variable and a characterization of M-convexity of a function
in terms of its minimizers.
The second proof
\cite{Shi96indRep,Shi98ind} 
is an algorithmic proof,
which is described in \cite[Section 9.6.2]{Mdcasiam}. 
Yet another proof is possible, which
derives this as a corollary of Part (4) for jump M-convex functions.
Recall that an M-convex function is characterized as a jump M-convex function 
that has a constant-sum effective domain. 
If the given function $f$ is M-convex, then it is jump M-convex, and therefore,
$g$ is jump M-convex by Part (4). In addition, $\dom g$ is a constant-sum system,
since $\dom f$ is a constant-sum system and 
$\partial \xi(U) + \partial \xi(W) = 0$
by \eqref{nettransSetconserve}.
Therefore, $g$ is M-convex.

(1)
The proof for \Mnat-convex functions
can be obtained from Part (2) for M-convex functions
as follows.
Let $f$ be an \Mnat-convex function given on $U$.
Consider two new vertices $u_{0}$ and $w_{0}$
and an arc $(u_{0},w_{0})$,
and let
$\tilde U = U \cup \{ u_{0} \}$,
$\tilde W = W \cup \{ w_{0} \}$,
$\tilde V = V \cup \{ u_{0}, w_{0} \}$, 
$\tilde A = A \cup \{ (u_{0}, w_{0}) \}$, and
$\tilde G=(\tilde V, \tilde A; \tilde U, \tilde W)$.
For $a =(u_{0},w_{0})$
we define $\ell(a) = -\infty$, $u(a) = +\infty$,
and $\varphi_{a} \equiv 0$.
Let 
$\tilde{f}$ and $\tilde{g}$ be the functions
associated, respectively, with 
$f$ and $g$ as in \eqref{mfnmnatfnrelationvex},
where 
$\dom \tilde{f} \subseteq \{ x \in \ZZ\sp{\tilde U} \mid x(\tilde U)=0 \}$
and
$\dom \tilde{g} \subseteq \{ y \in \ZZ\sp{\tilde W} \mid y(\tilde W)=0 \}$.
If the function $g$ is induced from $f$ by $G$,
then  $\tilde g$ coincides with the function induced from $\tilde f$ by $\tilde G$.
Since $f$ is \Mnat-convex, $\tilde f$ is M-convex, and hence
$\tilde g$ is M-convex by Part (2).
This implies that $g$ is \Mnat-convex.
It is also possible to adapt the first and second proofs for M-convex functions
to \Mnat-convex functions.
\end{proof}

\begin{remark} \rm \label{RMnetindfn}
Here is a supplement to Theorem~\ref{THnetindfn}.
The network induction for discrete convex functions
is considered first by Murota \cite{Mstein96} 
for M-convex functions,
and stated also in 
\cite[Theorem 9.26]{Mdcasiam}. 
Part (1) for \Mnat-convex functions
is a variant thereof,
and stated in \cite[Theorem 9.26]{Mdcasiam}. 
Part~(4) for jump M-convex function is established in \cite{KMT07jump}
and Part (3) for jump \Mnat-convex functions
is derived therefrom in 
\cite{Mmnatjump19}. 
Theorem \ref{THnetindfn} here is a generalization 
of Theorem \ref{THnetindset} for discrete convex sets.
The transformation by networks can be generalized by replacing networks 
with poly-linking systems, 
and it is shown in \cite{KM07jumplink} that
the transformation by valuated integral poly-linking systems 
preserves M-convexity and jump M-convexity.
\end{remark}

\begin{example}[{\cite[Note 9.31]{Mdcasiam}}]  \rm  \label{EXlaminarByNetInd}
A laminar convex function introduced in Section \ref{SCmnatfn}
can be constructed by means of the network induction.
As a concrete example, consider
\[
g(y_{1}, y_{2}, y_{3} ) = | y_{1} + y_{2} + y_{3} | + ( y_{1} + y_{2} )\sp{2} + {y_{3}}\sp{2} ,
\]
which is a laminar convex function of the form of \eqref{mnatfnlaminar}
with a laminar family 
$\mathcal{T} = \{ \  \{ 1,2,3 \},    \allowbreak     \{ 1,2 \}, \{ 1 \}, \{ 2 \}, \{ 3 \} \  \}$
and univariate convex functions
$\varphi_{123}(t) = |t|$,
$\varphi_{12}(t) = \varphi_{3}(t) = t\sp{2}$, and
$\varphi_{1}(t) = \varphi_{2}(t) = 0$.
For this function we consider the graph $G$, 
a rooted directed tree, depicted in Fig.~\ref{FGlaminartree3}.
Each vertex other than the root $u$ corresponds to a member of $\mathcal{T}$.
The entrance set $U$ is the singleton set of the root, 
i.e., $U = \{ u  \}$, and
the exit set $W$ is the set of the leaves, 
i.e., $W = \{ v_{1},v_{2},v_{3}  \}$.
The cost function $\varphi_{a}$ on each arc is determined
by the head of the arc; for example,
we have 
$\varphi_{123}(t) = |t|$ for arc $(u, v_{123})$
and
$\varphi_{12}(t) = t\sp{2}$ for arc $(v_{123}, v_{12})$.
Assume that 
the identically zero function $f \equiv 0$ is defined on the entrance set $U$.
Then the function induced from $f$ by $G$ coincides with the function
$g(y_{1},y_{2},y_{3} ) = g(-y_{1}, -y_{2}, -y_{3} )$.
Since $f \equiv 0$ is \Mnat-convex, Theorem \ref{THnetindfn} (1)
shows that $g(y_{1}, y_{2}, y_{3} )$ is \Mnat-convex.
\end{example}

\begin{figure}\begin{center}
 \includegraphics[width=0.4\textwidth,clip]{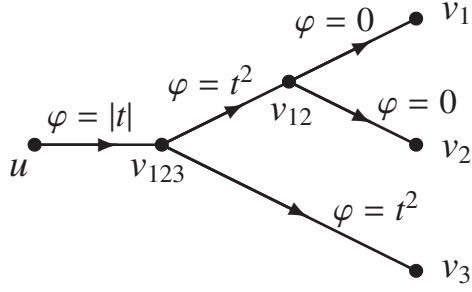}
 \caption{Network induction for a laminar convex function}
 \label{FGlaminartree3}
\end{center}\end{figure}

In contrast, other kinds of discrete convexity
are not compatible with the network induction.
That is, the network induction does not preserve
separable convexity,
integral convexity,
\Lnat-convexity,
L-convexity,
multimodularity,
and 
(global, local)
discrete midpoint convexity.
Note that these statements are immediate from 
the corresponding statements for splitting and aggregation
in Sections \ref{SCsplitfn} and \ref{SCaggrfn},
since the network induction is more general than those operations.


\section{Proofs}
\label{SCproof}

In this section we give proofs for
Propositions \ref{PRsetdirsumMult},
\ref{PRsplitsetIC},
\ref{PRsplitsetMult},
\ref{PRfndirsumMult},
\ref{PRsplitfnIC}, and 
\ref{PRsplitfnMult}.
We deal with propositions concerning integral convexity
in the first two subsections,
and then those concerning multimodularity
in the following subsections, as follows:

\begin{itemize}
\item
Section \ref{SCicvsetsplitprf}:  Proposition \ref{PRsplitsetIC}
for splitting of integrally convex sets,

\item
Section \ref{SCicvfnsplitprf}: Proposition \ref{PRsplitfnIC}
for splitting of integrally convex functions,

\item
Section \ref{SCmmdirsumprf}:
Propositions \ref{PRsetdirsumMult} and \ref{PRfndirsumMult} 
for direct sum of multimodular sets and functions,

\item
Section \ref{SCmmsplitprf}: 
Propositions \ref{PRsplitsetMult} and \ref{PRsplitfnMult}
for splitting of multimodular sets and functions.
\end{itemize}

\subsection{Proof for the splitting of integrally convex sets}
\label{SCicvsetsplitprf}

Here is a proof of Proposition \ref{PRsplitsetIC}
concerning the splitting of an integrally convex set $S$.
It suffices to consider an elementary splitting. 
Specifically we consider the splitting of 
the first variable $x_{1}$ of $(x_{1}, x_{2}, \ldots, x_{n}) \in S$ 
into two variables $(y_{0}, y_{1})$ satisfying $x_{1} = y_{0} + y_{1}$,
that is,
$U_{1}=\{ 0, 1 \}$ and $U_{i}=\{ i \}$ for $i=2,\ldots,n$
in the notation of Section~\ref{SCsplitset}.
The resulting set $T$ is given by
\begin{equation*} 
 T = \{ (y_{0}, y_{1}, y_{2}, \dots , y_{n}) \in \ZZ\sp{n+1} \mid
  y_{0}+ y_{1} = x_{1}, \  y_{i} = x_{i} \ (i=2,\ldots,n), \
  (x_{1}, x_{2}, \dots , x_{n}) \in S
   \} .
\end{equation*}

To show the integral convexity of $T$, take any 
$y \in \overline{T} \subseteq \RR\sp{n+1}$.
A crucial step of the proof is to find a set of vectors $v\sp{\ell} \in T \cap N(y)$
to represent $y$ as their convex combination:
\begin{equation} \label{convcomby}
 y = \sum_{\ell} \mu_{\ell} v\sp{\ell},
\end{equation}
where $\mu_{\ell} \geq 0$ and $\sum_{\ell} \mu_{\ell} = 1$.
This means, in particular, that each 
$v\sp{\ell} \in \ZZ\sp{n+1}$
must satisfy the condition
$\lfloor y \rfloor \leq  v\sp{\ell} \leq \lceil y \rceil$,
since 
(cf., \eqref{intneighbordeffloorceil})
\[
N(y) = \{ z \in \ZZ\sp{n+1} \mid
\lfloor y_{i} \rfloor \leq  z_{i} \leq \lceil y_{i} \rceil  \ \ (i=0,1,\ldots, n) \} .
\]

We introduce notation
$\hat y= (y_{2}, \dots , y_{n}) \in \RR\sp{n-1}$.
Then  $y = (y_{0}, y_{1}, \hat y)$.
Let 
\begin{equation*}
x = (y_{0}+ y_{1}, \hat y) = (y_{0}+ y_{1}, y_{2}, \dots , y_{n}) \in \RR\sp{n}.
\end{equation*}
We have $x \in \overline{S}$.
By the integral convexity of $S$, we can represent $x$ as a convex combination
of some $u\sp{k} \in S \cap N(x)$ ($k=1,2,\ldots,m$), that is,
\begin{equation} \label{convcombx}
 x = \sum_{k=1}\sp{m} \lambda_{k} u\sp{k}
\end{equation}
with $\lambda_{k} \geq 0$ and $\sum_{k} \lambda_{k} = 1$,
where
$u\sp{k} \in S$ ($\subseteq \ZZ\sp{n}$)  and 
$\lfloor x \rfloor \leq  u\sp{k} \leq \lceil x \rceil$ for $k=1,2,\ldots,m$.
The equation \eqref{convcombx} shows
\begin{equation} \label{convcombx1}
  y_{0}+ y_{1} = \sum_{k=1}\sp{m} \lambda_{k} u\sp{k}_{1},
\qquad
  \hat y = \sum_{k=1}\sp{m} \lambda_{k} \hat u\sp{k},
\end{equation}
where 
$u\sp{k} = (u\sp{k}_{1}, \hat u\sp{k})$
with 
$u\sp{k}_{1} \in \ZZ$ and $\hat u\sp{k} \in \ZZ\sp{n-1}$.
The condition
$\lfloor x \rfloor \leq  u\sp{k} \leq \lceil x \rceil$
is equivalent to
\begin{equation} \label{boxy}
\lfloor y_{0}+ y_{1} \rfloor \leq  u\sp{k}_{1} \leq \lceil y_{0}+ y_{1} \rceil ,
\qquad
\lfloor \hat y \rfloor \leq  \hat u\sp{k} \leq \lceil \hat y \rceil .
\end{equation}
Let 
\begin{equation} \label{K0K1def}
K_{0} = \{ k \mid u\sp{k}_{1}=\lfloor y_{0} +  y_{1} \rfloor \},
\qquad 
K_{1} = \{ k \mid u\sp{k}_{1}=\lfloor y_{0} +  y_{1} \rfloor + 1 \}.
\end{equation}

Denote the fractional parts of $y_{0}$ and $y_{1}$ by
\begin{equation} \label{eta0eta1def}
 \eta_{0} = y_{0} - \lfloor y_{0} \rfloor,
\qquad
 \eta_{1} = y_{1} - \lfloor y_{1} \rfloor .
\end{equation}
We have $0 \leq \eta_{0} < 1$ and $0 \leq \eta_{1} < 1$,
from which follows
$0 \leq \eta_{0} +  \eta_{1} < 2$.
We distinguish the following cases:

Case 1:  
$0 < \eta_{0} < 1$, \ $0 < \eta_{1} < 1$, \ $\eta_{0} +  \eta_{1} < 1$ 
\quad (This is the essential case);

Case 2:  
$0 < \eta_{0} < 1$, \ $0 < \eta_{1} < 1$, \ $\eta_{0} +  \eta_{1} > 1$;

Case 3:  
$\eta_{0} = 0$ or $\eta_{1} = 0$ or $\eta_{0} +  \eta_{1} = 1$
(in addition to $0 \leq \eta_{0} < 1$ and $0 \leq \eta_{1} < 1$).

\subsubsection{Case 1:  $0 < \eta_{0} < 1$, \ $0 < \eta_{1} < 1$, \ $\eta_{0} +  \eta_{1} < 1$}
\label{SCicvsplitcase1}

In this case we have
\begin{equation} \label{case1y0y1}
\lfloor y_{0} \rfloor + 1 = \lceil  y_{0} \rceil,
\quad
\lfloor y_{1} \rfloor + 1 = \lceil  y_{1} \rceil,
\quad
\lfloor y_{0} \rfloor + \lfloor y_{1} \rfloor = \lfloor y_{0} + y_{1} \rfloor .
\end{equation}

For $k=1,2,\ldots,m$,
we define $(n+1)$-dimensional integer vectors 
$v\sp{k}$ or $\{ v\sp{k0}, v\sp{k1}  \}$ 
from the vectors 
$u\sp{k} = (u\sp{k}_{1}, \hat u\sp{k})$
in \eqref{convcombx}.
Define
\begin{align} 
 v\sp{k} &:= (\lfloor y_{0} \rfloor, \quad \ \  \lfloor y_{1} \rfloor, \quad  \  \hat u\sp{k})
\qquad \mbox{for $k \in K_{0}$},
\label{basisK0}
\\
 v\sp{k0} &:= (\lfloor y_{0} \rfloor + 1, \lfloor y_{1} \rfloor, \quad \  \hat u\sp{k})
\qquad \mbox{for $k \in K_{1}$},
\label{basisK10}
\\
 v\sp{k1} &:= (\lfloor y_{0} \rfloor , \quad \ \   \lfloor y_{1} \rfloor + 1, \hat u\sp{k})
\qquad \mbox{for $k \in K_{1}$}.
\label{basisK11}
\end{align}
We use notations
$v\sp{k} = (v\sp{k}_{0}, v\sp{k}_{1}, \hat v\sp{k})$,
$v\sp{k0} = (v\sp{k0}_{0}, v\sp{k0}_{1}, \hat v\sp{k0})$, and
$v\sp{k1} = (v\sp{k1}_{0}, v\sp{k1}_{1}, \hat v\sp{k1})$.

\medskip

Claim 1:
(i)
$v\sp{k} \in T \cap N(y)$ for $k \in K_{0}$, and
(ii)
$v\sp{k0}, v\sp{k1} \in T \cap N(y)$ for $k \in K_{1}$.
\begin{proof}[Proof of Claim 1]
(i) 
Let $k \in K_{0}$.
We have $v\sp{k} \in T$ 
since 
\[
v\sp{k}_{0} + v\sp{k}_{1} 
= \lfloor y_{0} \rfloor + \lfloor y_{1} \rfloor 
= \lfloor y_{0} + y_{1} \rfloor
= u\sp{k}_{1}.
\]
We have $v\sp{k} \in N(y)$
since
$ v\sp{k}_{i} = \lfloor y_{i} \rfloor $
for $i=0,1$
and
$\lfloor \hat y \rfloor \leq  \hat v\sp{k} = \hat u\sp{k} \leq \lceil \hat y \rceil$
by \eqref{boxy}.

(ii) 
Let $k \in K_{1}$.
We have $v\sp{k0},  v\sp{k1}  \in T$ 
since 
\[
v\sp{kj}_{0} + v\sp{kj}_{1} 
= \lfloor y_{0} \rfloor + \lfloor y_{1} \rfloor + 1 
= \lfloor y_{0} + y_{1} \rfloor + 1
= u\sp{k}_{1}
\]
for $j=0,1$.
We have $v\sp{k0} \in N(y)$
since
\[
 v\sp{k0}_{0} = \lfloor y_{0} \rfloor + 1 = \lceil  y_{0} \rceil,
\quad 
 v\sp{k0}_{1} = \lfloor y_{1} \rfloor,
\quad 
 \lfloor \hat y \rfloor \leq \hat v\sp{k0} = \hat u\sp{k} \leq \lceil \hat y \rceil
\]
by \eqref{boxy}.
Similarly, we have $v\sp{k1} \in N(y)$.
\end{proof}

We will show that we can represent $y$ as a convex combination
of the vectors in \eqref{basisK0}--\eqref{basisK11}, that is,
\begin{equation} \label{convcomby2}
 y = \sum_{k \in K_{0}} \mu_{k} v\sp{k}
    + \sum_{k \in K_{1}} ( \mu_{k0} v\sp{k0} + \mu_{k1} v\sp{k1})
\end{equation}
for some $\mu_{k}, \mu_{k0}, \mu_{k1} \geq 0$
with 
$\sum_{k \in K_{0}} \mu_{k} + \sum_{k \in K_{1}} ( \mu_{k0} + \mu_{k1} ) = 1$.
For the coefficients for $k \in K_{0}$ we take 
\begin{equation} \label{mukK0}
\mu_{k} = \lambda_{k}
\qquad
(k \in K_{0}).
\end{equation}
For the coefficients for $k \in K_{1}$ we have the following.

\medskip

Claim 2:
There exist nonnegative $\mu_{k0}$, $\mu_{k1}$ $(k \in K_{1})$
satisfying
\begin{align} 
\sum_{k \in K_{1}}  \mu_{k0} &= \eta_{0},
\label{muk0K1eta0}
\\
\sum_{k \in K_{1}}  \mu_{k1} &= \eta_{1},
\label{muk1K1eta1}
\\
 \mu_{k0} + \mu_{k1}  &= \lambda_{k}
\qquad \mbox{for each $k \in K_{1}$}.
\label{muk0muk1}
\end{align}
\begin{proof}[Proof of Claim 2]
Consider a $2 \times  |K_{1}|$ matrix (array), say, $M$
in which the first row is
$(\mu_{k0} \mid k \in K_{1})$
and 
the second row is
$(\mu_{k1} \mid k \in K_{1})$.
The conditions above say that 
the first row-sum of $M$ is equal to $\eta_{0}$,
the second row-sum is equal to $\eta_{1}$,
and the $k$-th column-sum is equal to $\lambda_{k}$.
Note that the sum
 of the row-sums
is equal to the sum 
of the column-sums,
that is,
\[
\eta_{0} + \eta_{1} =\sum_{k \in K_{1}} \lambda_{k},
\]
since
\eqref{eta0eta1def} and \eqref{case1y0y1} imply
\[
\eta_{0} + \eta_{1} 
=  y_{0} + y_{1}  - (\lfloor y_{0} \rfloor + \lfloor y_{1} \rfloor )
=  y_{0} + y_{1}  - \lfloor y_{0}  +  y_{1} \rfloor ,
\]
whereas 
\eqref{convcombx1} implies
\[
  y_{0} + y_{1}  = \lfloor y_{0}  +  y_{1} \rfloor 
  + \sum_{k \in K_{1}}  \lambda_{k} .
\]
Thus the proof of Claim 2 is reduced to showing the existence 
of a feasible (nonnegative) solution to a transportation problem.
As is well known, a feasible solution always exists and it
can be constructed by the so-called north-west corner method (or north-west rule \cite{Sch03}).
\end{proof}

The coefficients $\mu_{k}$, $\mu_{k0}, \mu_{k1}$ constructed above
have the desired properties.
Indeed, we have the following:
\begin{itemize}
\item
By \eqref{mukK0} and \eqref{muk0muk1},
they are nonnegative numbers adding up to one:
\[
\sum_{k \in K_{0}} \mu_{k} + \sum_{k \in K_{1}} ( \mu_{k0} + \mu_{k1} ) 
= \sum_{k \in K_{0}} \lambda_{k} + \sum_{k \in K_{1}} \lambda_{k} 
= 1 .
\]

\item
By \eqref{muk0K1eta0},
the first ($0$-th) component of the right-hand side of \eqref{convcomby2} is equal to
\[
\lfloor y_{0} \rfloor  + \sum_{k \in K_{1}}  \mu_{k0}
= \lfloor y_{0} \rfloor  + \eta_{0} = y_{0} .
\]

\item
By \eqref{muk1K1eta1}, 
the second component of the right-hand side of \eqref{convcomby2} is equal to
\[
\lfloor y_{1} \rfloor  + \sum_{k \in K_{1}}  \mu_{k1}
= \lfloor y_{1} \rfloor  + \eta_{1} = y_{1} .
\]

\item
By \eqref{mukK0}, \eqref{muk0muk1}, and \eqref{convcombx1},
the remaining part is equal to
\[
 \sum_{k \in K_{0}} \mu_{k} \hat v\sp{k}
    + \sum_{k \in K_{1}} ( \mu_{k0} \hat v\sp{k0} + \mu_{k1} \hat v\sp{k1})
= \sum_{k \in K_{0}} \mu_{k} \hat u\sp{k}
    + \sum_{k \in K_{1}} ( \mu_{k0} + \mu_{k1}) \hat u\sp{k}
= \sum_{k \in K_{0} \cup K_{1}} \lambda_{k} \hat u\sp{k}
= \hat y .
\]
\end{itemize}

The above argument shows the following lemma, 
which will be used in 
the proof of the splitting of integrally convex functions
in Section \ref{SCicvfnsplitprf}.

\begin{lemma} \label{LMsplitsetICprf1}
Let $y \in \overline{T}$ and 
$x = (y_{0}+ y_{1}, \hat y) = (y_{0}+ y_{1}, y_{2}, \dots , y_{n})$,
and consider an arbitrary representation 
$\displaystyle
 x = \sum_{k=1}\sp{m} \lambda_{k} u\sp{k} 
$
of $x$ as a convex combination
of $u\sp{k} \in S \cap N(x)$ $(k=1,2,\ldots,m)$,
where
$y \in \RR\sp{n+1}$ and $x, u\sp{1}, \ldots, u\sp{m} \in \RR\sp{n}$.
Assuming Case 1, the vectors 
$v\sp{k}$, $v\sp{k0}$, $v\sp{k1}$ 
defined by
\eqref{basisK0},
\eqref{basisK10},
\eqref{basisK11}
all belong to $T \cap N(y)$,
and $y$ can be represented as their convex combination as
\begin{equation} \label{convcomby22}
 y = \sum_{k \in K_{0}} \mu_{k} v\sp{k}
    + \sum_{k \in K_{1}} ( \mu_{k0} v\sp{k0} + \mu_{k1} v\sp{k1}) ,
\end{equation}
where
$\lambda_{k} = \mu_{k}$ for $k \in K_{0}$
and 
$\lambda_{k}  =  \mu_{k0} + \mu_{k1}$ for  $k \in K_{1}$.
\end{lemma}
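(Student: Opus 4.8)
The plan is to observe that Lemma~\ref{LMsplitsetICprf1} merely records, in a form convenient for the function version in Section~\ref{SCicvfnsplitprf}, exactly what the Case~1 construction of Section~\ref{SCicvsplitcase1} establishes: starting from \emph{any} convex representation $x=\sum_k \lambda_k u^k$ with $u^k\in S\cap N(x)$, one builds the lifted vectors $v^k$, $v^{k0}$, $v^{k1}$ together with a matching family of coefficients. Since the coordinate-merging map $(y_0,y_1,\hat y)\mapsto(y_0+y_1,\hat y)$ is linear and carries $T$ onto $S$, the hypothesis $y\in\overline{T}$ gives $x\in\overline{S}$, so such a representation of $x$ is indeed available.

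First I would dispose of the membership claim $v^k,v^{k0},v^{k1}\in T\cap N(y)$ by invoking Claim~1: the defining equation of $T$ holds because the first two coordinates of each lifted vector sum to $u^k_1$, using the Case~1 identity $\lfloor y_0\rfloor+\lfloor y_1\rfloor=\lfloor y_0+y_1\rfloor$ from \eqref{case1y0y1} together with the definition \eqref{K0K1def} of $K_0,K_1$; and the neighborhood condition holds because the first two coordinates are pinned to $\lfloor y_i\rfloor$ or $\lceil y_i\rceil$ while the tail $\hat u^k$ already satisfies the box constraint \eqref{boxy}. Next I would fix the coefficients as $\mu_k=\lambda_k$ on $K_0$ (equation \eqref{mukK0}) and, on $K_1$, as the numbers $\mu_{k0},\mu_{k1}$ supplied by Claim~2; by construction $\mu_{k0}+\mu_{k1}=\lambda_k$, which is precisely the asserted coefficient relation, while the row sums equal $\eta_0$ and $\eta_1$. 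A short coordinatewise check then confirms \eqref{convcomby22}: the weights total $1$, the $0$-th and first coordinates reconstruct $y_0=\lfloor y_0\rfloor+\eta_0$ and $y_1=\lfloor y_1\rfloor+\eta_1$, and the tail reproduces $\hat y$ via \eqref{convcombx1}.

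The one substantive step, and the part I expect to be the main obstacle, is the feasibility underlying Claim~2, namely the existence of nonnegative $\mu_{k0},\mu_{k1}$ with prescribed row sums $\eta_0,\eta_1$ and column sums $\lambda_k$. This reduces to the balance identity $\eta_0+\eta_1=\sum_{k\in K_1}\lambda_k$, which I would obtain by combining the fractional-part definitions \eqref{eta0eta1def}, the Case~1 relation \eqref{case1y0y1}, and the first equation of \eqref{convcombx1}, noting that in Case~1 each $u^k_1$ takes only the values $\lfloor y_0+y_1\rfloor$ and $\lfloor y_0+y_1\rfloor+1$. Once the row-sum total equals the column-sum total, a feasible transportation array exists and can be produced explicitly by the north-west corner rule; the remaining verifications are routine bookkeeping.
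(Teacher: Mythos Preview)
Your proposal is correct and follows essentially the same approach as the paper: the lemma is a summary of the Case~1 construction, and you reproduce exactly the paper's argument---Claim~1 for membership in $T\cap N(y)$, the choice $\mu_k=\lambda_k$ on $K_0$, Claim~2 (via the balance identity $\eta_0+\eta_1=\sum_{k\in K_1}\lambda_k$ and the north-west corner rule) for the coefficients on $K_1$, and the coordinatewise verification of \eqref{convcomby22}.
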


\subsubsection{Case 2: $0 < \eta_{0} < 1$, \ $0 < \eta_{1} < 1$, \ $\eta_{0} +  \eta_{1} > 1$}

By coordinate inversion we can reduce this case to Case 1.  Let
\[
\check S = -S,
\quad
\check T = -T,
\quad
\check y = -y,
\quad
\check x = -x.
\]
Then $\check S$ is integrally convex
and $\check T$ is an elementary splitting of $\check S$.

Denote the fractional parts of $\check y_{0}$ and $\check y_{1}$ by
\begin{equation*} 
 \check \eta_{0} = \check y_{0} - \lfloor \check y_{0} \rfloor,
\qquad
 \check \eta_{1} = \check y_{1} - \lfloor \check y_{1} \rfloor .
\end{equation*}
For $i=0,1$ we have
\[
 \check \eta_{i} = -y_{i} - \lfloor -y_{i} \rfloor
   = -y_{i} + \lceil y_{i} \rceil 
= -y_{i} + ( \lfloor y_{i} \rfloor + 1)
 = 1-  \eta_{i} ,
\]
and therefore,
$0 < \check \eta_{0} < 1$, \ $0 < \check \eta_{1} < 1$, \ $\check \eta_{0} +  \check \eta_{1} < 1$.
By the argument for Case 1, we have
$\check y \in \overline{\check T \cap N(\check y)}$,
which is equivalent to
$y \in \overline{T \cap N(y)}$.

\subsubsection{Case 3: $\eta_{0} = 0$ or $\eta_{1} = 0$ or $\eta_{0} +  \eta_{1} = 1$}

In this case, $y$ lies on the boundary of the region of Case 1.
We consider a perturbation of 
$y$ in the first two components $y_{0}$ and $y_{1}$．
For an arbitrary $\varepsilon > 0$,  take 
$y\sp{\varepsilon} = ( y\sp{\varepsilon}_{0}, y\sp{\varepsilon}_{1}, y_{2},  \ldots, y_{n}) 
\in \overline{T}$
with 
$| y\sp{\varepsilon}_{i} - y_{i} | \leq \varepsilon$
($i=0,1$)
such that
$ \eta\sp{\varepsilon}_{0} = y\sp{\varepsilon}_{0} - \lfloor y\sp{\varepsilon}_{0} \rfloor$ 
and 
$ \eta\sp{\varepsilon}_{1} = y\sp{\varepsilon}_{1} - \lfloor y\sp{\varepsilon}_{1} \rfloor$
satisfy 
$0 < \eta\sp{\varepsilon}_{0} < 1$, \ $0 < \eta\sp{\varepsilon}_{1} < 1$,
and $\eta\sp{\varepsilon}_{0} +  \eta\sp{\varepsilon}_{1} < 1$.
Then we have
\[
N(y\sp{\varepsilon}) = \{ z \in \ZZ\sp{n+1} \mid
\lfloor y_{i} \rfloor \leq  z_{i} \leq \lfloor y_{i} \rfloor + 1
\ (i=0,1), \ \ 
\lfloor y_{i} \rfloor \leq  z_{i} \leq \lceil y_{i} \rceil 
\ (i=2,\ldots, n)
\},
\]
which we denote by $N(y_{*})$ 
since it does not depend on $\varepsilon$.
Note that $N(y_{*})$ is strictly larger than $N(y)$.
By the argument of Case 1,
we have $y\sp{\varepsilon} \in \overline{T \cap N(y_{*})}$.
By letting $\varepsilon \to 0$, we obtain
$y \in \overline{T \cap N(y_{*})}$
since the convex hull of $T \cap N(y_{*})$ is a closed set.
Furthermore, 
$y \in \overline{T \cap N(y_{*})}$
implies $y \in \overline{T \cap N(y)}$
in spite of the proper inclusion $N(y_{*}) \supset N(y)$.

We have completed the proof of Proposition \ref{PRsplitsetIC}.

\subsection{Proof for the splitting of integrally convex functions}
\label{SCicvfnsplitprf}

Here is a proof
of Proposition \ref{PRsplitfnIC}
concerning the splitting of an integrally convex function.
Let $g$ be an elementary splitting of an integrally convex function $f$:
\begin{equation} \label{elesplitfndef}
 g (y_{0}, y_{1}, y_{2}, \dots , y_{n})
= f(y_{0}+ y_{1}, y_{2}, \dots , y_{n})
\qquad (y \in \ZZ\sp{n+1}).
\end{equation}
The effective domain $T = \dom g$
is an elementary splitting of $S = \dom f$.

To prove the integral convexity of $g$,
it suffices,
by Theorem~\ref{THmmtt19ThA1} (if part),
to show 
that the local convex extension $\tilde{g}$ of $g$ satisfies the inequality
\begin{equation}  \label{intcnvdefg}
\tilde{g}\, \bigg(\frac{z + w}{2} \bigg) 
\leq \frac{1}{2} (g(z) + g(w))
\end{equation}
for all $z, w \in \dom g$.
Let
$\check z =  (z_{0}+ z_{1}, z_{2}, \dots , z_{n})$
and
$\check w =  (w_{0}+ w_{1}, w_{2}, \dots , w_{n})$.
By Theorem~\ref{THmmtt19ThA1} (only-if part),
the local convex extension $\tilde{f}$ of $f$ satisfies the inequality
\begin{equation*}  
\tilde{f}\, \bigg(\frac{\check z + \check w}{2} \bigg) 
\leq \frac{1}{2} (f(\check z) + f(\check w)) ,
\end{equation*}
whereas 
\[
 \frac{1}{2} (f(\check z) + f(\check w)) 
=
 \frac{1}{2} (g(z) + g(w))
\]
from \eqref{elesplitfndef}.
Therefore, the desired inequality
\eqref{intcnvdefg} 
follows from Lemma \ref{LMsplitfnICprf2} below,
where the technical result stated in Lemma \ref{LMsplitsetICprf1}
plays the crucial role in the proof.

\begin{lemma} \label{LMsplitfnICprf2}
\begin{equation}  \label{tilgtilf}
\tilde{g}\, \bigg(\frac{z + w}{2} \bigg) 
\leq
\tilde{f}\, \bigg(\frac{\check z + \check w}{2} \bigg) .
\end{equation}
\end{lemma}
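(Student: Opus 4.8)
The plan is to turn the claimed inequality between local convex extensions into the lifting of a single optimal convex representation. Write $y=(z+w)/2$ and $x=(\check z+\check w)/2$; since $\check z=(z_0+z_1,z_2,\dots,z_n)$ and likewise for $\check w$, the point $x$ equals $(y_0+y_1,\hat y)$ in the notation of Lemma~\ref{LMsplitsetICprf1}, and $x\in\overline S$ because $\check z,\check w\in S=\dom f$. I would fix a representation $x=\sum_k\lambda_k u^k$ with $u^k\in S\cap N(x)$, $\lambda_k\ge 0$, $\sum_k\lambda_k=1$, attaining the extension, i.e. $\tilde f(x)=\sum_k\lambda_k f(u^k)$ (the minimum in \eqref{fnconvclosureloc2} is attained since $N(x)$ is finite). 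It then suffices to produce vectors $v^\bullet\in T\cap N(y)$ and weights $\mu_\bullet\ge 0$ with $\sum_\bullet\mu_\bullet=1$, $y=\sum_\bullet\mu_\bullet v^\bullet$ and $\sum_\bullet\mu_\bullet g(v^\bullet)=\sum_k\lambda_k f(u^k)$; the definition of $\tilde g$ then gives $\tilde g(y)\le\sum_\bullet\mu_\bullet g(v^\bullet)=\tilde f(x)$, which is \eqref{tilgtilf}.

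The lift I need is exactly the one built in Lemma~\ref{LMsplitsetICprf1}. What converts that set statement into the required value identity is the splitting formula \eqref{elesplitfndef}: every lifted vector has its $0$th and $1$st coordinates adding up to $u^k_1$ while keeping the tail $\hat u^k$, so $g(v^k)=g(v^{k0})=g(v^{k1})=f(u^k_1,\hat u^k)=f(u^k)$. Since Lemma~\ref{LMsplitsetICprf1} also records $\lambda_k=\mu_k$ for $k\in K_0$ and $\lambda_k=\mu_{k0}+\mu_{k1}$ for $k\in K_1$, inserting these into \eqref{convcomby22} gives $\sum_\bullet\mu_\bullet g(v^\bullet)=\sum_k\lambda_k f(u^k)=\tilde f(x)$, as wanted.

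The only genuine issue is that Lemma~\ref{LMsplitsetICprf1} is phrased under Case~1, while $z,w\in\ZZ\sp{n+1}$ force $y=(z+w)/2$ to be half-integral, so the fractional parts $\eta_0,\eta_1$ lie in $\{0,\tfrac12\}$ and we are always in Case~3. I would handle this by the reductions of Section~\ref{SCicvsetsplitprf} but, to keep the function values under control, by writing the lifts down explicitly rather than passing to a limit. If $\eta_0=0$ (so $y_0\in\ZZ$; the case $\eta_1=0$ is symmetric) put $v^k=(y_0,\,u^k_1-y_0,\,\hat u^k)$; the bound $\lfloor x\rfloor\le u^k\le\lceil x\rceil$ places $v^k$ in $T\cap N(y)$ and the coordinate sums recover $y$. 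If $\eta_0=\eta_1=\tfrac12$ then $y_0+y_1\in\ZZ$ forces $u^k_1=y_0+y_1$ for every $k$, and I split each $u^k$ into the two admissible lifts $(\lceil y_0\rceil,\lfloor y_1\rfloor,\hat u^k)$ and $(\lfloor y_0\rfloor,\lceil y_1\rceil,\hat u^k)$, distributing $\lambda_k$ between them by the north-west corner rule so that the two first-coordinate sums equal $\eta_0$ and $\eta_1$. In each case the lifts sit in $T\cap N(y)$ and satisfy $g(\cdot)=f(u^k)$, so the computation of the preceding paragraph applies verbatim.

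I expect this Case~3 bookkeeping to be the main obstacle. The tempting shortcut --- perturbing $y$ into Case~1 and invoking Lemma~\ref{LMsplitsetICprf1} directly, then letting the perturbation tend to $0$ --- runs into the difficulty already visible in the set proof: the perturbed lifts are only guaranteed to lie in the enlarged neighborhood $N(y_*)\supsetneq N(y)$, and since enlarging the neighborhood can only lower the local convex extension, a cost bound over $N(y_*)$ does not bound $\tilde g(y)$ over $N(y)$. Producing the lifts inside $N(y)$ from the start, as above, is what avoids this mismatch.
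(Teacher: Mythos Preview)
Your argument is correct and follows the paper's approach: pick an optimal local-convex representation of $x=(\check z+\check w)/2$ for $\tilde f$, lift each $u^{k}$ to vectors in $T\cap N(y)$ whose first two coordinates sum to $u^{k}_{1}$, and use $g(\text{lift})=f(u^{k})$ together with $\mu_{k}=\lambda_{k}$ (resp.\ $\mu_{k0}+\mu_{k1}=\lambda_{k}$) to match the cost. This is exactly what the paper does via Lemma~\ref{LMsplitsetICprf1}.

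Where you go further is in the case analysis. The paper's proof of the lemma simply writes ``Here we assume Case~1 \dots\ which is the essential case,'' leaving Cases~2 and~3 to the reader by analogy with the set proof. You correctly observe that for $z,w\in\ZZ^{n+1}$ the midpoint $y$ is half-integral, so $\eta_{0},\eta_{1}\in\{0,\tfrac12\}$ and one is \emph{always} in Case~3; your explicit lifts for $\eta_{0}=0$ (put $v^{k}=(y_{0},u^{k}_{1}-y_{0},\hat u^{k})$) and for $\eta_{0}=\eta_{1}=\tfrac12$ (split each $u^{k}$ into the two admissible lifts with weights summing to $\lambda_{k}$) are exactly right and make the argument self-contained. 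Your caution about the perturbation shortcut is also well-placed: the set proof's passage from $N(y_{*})$ back to $N(y)$ relies on the fact that any convex representation of $y$ over $N(y_{*})$ can only use points of $N(y)$ (integer coordinates of $y$ are forced), and without making that observation explicit the cost bound would indeed go the wrong way. Your direct construction sidesteps this cleanly.
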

\begin{proof}
Let
$y =  (z + w)/2$.
We have $y = (y_{0}, y_{1}, y_{2}, \dots , y_{n}) \in \overline{T}$.
Depending on the fractional parts 
$\eta_{0} = y_{0} - \lfloor y_{0} \rfloor$
and
$\eta_{1} = y_{1} - \lfloor y_{1} \rfloor$
of $y_{0}$ and $y_{1}$,
we have three cases as in Section \ref{SCicvsetsplitprf}.
Here we assume
Case 1 ($0 < \eta_{0} < 1$,  $0 < \eta_{1} < 1$,  $\eta_{0} +  \eta_{1} < 1$),
which is the essential case.

Let
$x = (y_{0}+ y_{1}, y_{2}, \dots , y_{n}) =(\check z + \check w)/2$.
By the definition of the local convex extension $\tilde f$,
there exist some $u\sp{k} \in S \cap N(x)$ ($k=1,2,\ldots,m$)
such that
\begin{equation*}
 x = \sum_{k=1}\sp{m} \lambda_{k} u\sp{k} , 
\qquad
 \tilde f(x) = \sum_{k=1}\sp{m} \lambda_{k} f(u\sp{k}) ,
\end{equation*}
where $\lambda_{k} \geq 0$ and $\sum_{k} \lambda_{k} = 1$.
We now apply Lemma \ref{LMsplitsetICprf1}
in Section \ref{SCicvsplitcase1}
to obtain
\begin{equation*} 
 y = \sum_{k \in K_{0}} \mu_{k} v\sp{k}
    + \sum_{k \in K_{1}} ( \mu_{k0} v\sp{k0} + \mu_{k1} v\sp{k1})
\end{equation*}
in  \eqref{convcomby22}.
It follows from this and the definition of the local convex extension $\tilde g$ that
\begin{equation*} 
 \tilde g(y) \leq \sum_{k \in K_{0}} \mu_{k} g(v\sp{k})
    + \sum_{k \in K_{1}} ( \mu_{k0} g(v\sp{k0}) + \mu_{k1} g(v\sp{k1})) .
\end{equation*}
On the right-hand side we have
\begin{align*} 
& g(v\sp{k}) = f(u\sp{k})
\qquad
\mbox{for $k \in K_{0}$} ,
\\
& g(v\sp{k0}) = g(v\sp{k1}) = f(u\sp{k}) 
\qquad \mbox{for  $k \in K_{1}$}
\end{align*}
by \eqref{basisK0}, \eqref{basisK10}, and \eqref{basisK11}.
We also have
$\lambda_{k}= \mu_{k}$ $(k \in K_{0})$
and
$\lambda_{k}  =  \mu_{k0} + \mu_{k1}$ $(k \in K_{1})$.
Therefore, we have
\begin{equation*} 
\sum_{k \in K_{0}} \mu_{k} g(v\sp{k})
    + \sum_{k \in K_{1}} ( \mu_{k0} g(v\sp{k0}) + \mu_{k1} g(v\sp{k1}))
 = \sum_{k=1}\sp{m} \lambda_{k} f(u\sp{k}) .
\end{equation*}
From the above argument we obtain
\begin{align*} 
\tilde{g}\, \bigg(\frac{z + w}{2} \bigg) 
=
\tilde g(y)  & \leq
\sum_{k \in K_{0}} \mu_{k} g(v\sp{k})
    + \sum_{k \in K_{1}} ( \mu_{k0} g(v\sp{k0}) + \mu_{k1} g(v\sp{k1}))
\\
& = \sum_{k=1}\sp{m} \lambda_{k} f(u\sp{k}) = \tilde f(x)  
=\tilde{f}\, \bigg(\frac{\check z + \check w}{2} \bigg) ,
\end{align*}
which shows \eqref{tilgtilf}.
\end{proof}

This completes the proof of Proposition \ref{PRsplitfnIC}.

\subsection{Proof for the direct sum of multimodular sets and functions}
\label{SCmmdirsumprf}

In Section \ref{SCmmfndirsumprf}
we give a proof of Proposition \ref{PRfndirsumMult}
concerning the direct sum of multimodular functions.
Proposition \ref{PRsetdirsumMult}
for multimodular sets follows from this
as a special case for the indicator functions of sets.
In Section \ref{SCmmsetdirsumprf}
we give an alternative proof of 
Proposition \ref{PRsetdirsumMult} for multimodular sets
based on the polyhedral description of a multimodular set.

\subsubsection{Proof via discrete midpoint convexity}
\label{SCmmfndirsumprf}

We make use of
Theorem~\ref{THmmfnlnatfn}
to reduce the argument for multimodular functions to that for \Lnat-convex functions.
The direct sum operation for multimodular functions
does not correspond to the direct sum of the corresponding \Lnat-convex functions,
but to a certain new operation on variables of the \Lnat-convex functions
(cf., Lemma \ref{LMmmfn2}). 
By investigating discrete midpoint convexity
we shall show that this new operation preserves \Lnat-convexity.

First we note a simple fact about integers.

\begin{lemma} \label{LMhint}
For $a, b \in \ZZ$ we have
\begin{align} 
\left\lceil \frac{a+b}{2} \right\rceil
=  \begin{cases} 
 \left\lceil {a}/{2} \right\rceil + \left\lceil {b}/{2} \right\rceil
 & (\mbox{\rm if $a$ is even}), \\
  \left\lceil {a}/{2} \right\rceil
 + \left\lfloor {b}/{2} \right\rfloor
 & (\mbox{\rm if $a$ is odd}), \\
 \end{cases}
\label{hintup}
\\
\left\lfloor \frac{a+b}{2} \right\rfloor
=  \begin{cases} 
 \left\lfloor {a}/{2} \right\rfloor + \left\lfloor {b}/{2} \right\rfloor
 & (\mbox{\rm if $a$ is even}) ,
\\
  \left\lfloor {a}/{2} \right\rfloor
 + \left\lceil {b}/{2} \right\rceil 
 & (\mbox{\rm if $a$ is odd}). \\
 \end{cases}
\label{hintdown}
\end{align}
\end{lemma}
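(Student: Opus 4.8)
The plan is to dispose of the lemma by a direct case analysis on the parity of $a$, exploiting the elementary fact that rounding commutes with the addition of an integer: $\lceil k + t \rceil = k + \lceil t \rceil$ and $\lfloor k + t \rfloor = k + \lfloor t \rfloor$ for any $k \in \ZZ$ and $t \in \RR$. This lets me peel off the integer part of $a/2$ in each case and reduce everything to a rounding statement about $b$ alone.

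First I would treat the case in which $a$ is even, writing $a = 2k$ with $k \in \ZZ$, so that $\lceil a/2 \rceil = \lfloor a/2 \rfloor = k$. Then $\lceil (a+b)/2 \rceil = \lceil k + b/2 \rceil = k + \lceil b/2 \rceil = \lceil a/2 \rceil + \lceil b/2 \rceil$, and the floor identity follows in the same way. Next I would treat the case in which $a$ is odd, writing $a = 2k+1$, so that $\lceil a/2 \rceil = k+1$ and $\lfloor a/2 \rfloor = k$; pulling out the integer $k$ gives $\lceil (a+b)/2 \rceil = k + \lceil (b+1)/2 \rceil$ and $\lfloor (a+b)/2 \rfloor = k + \lfloor (b+1)/2 \rfloor$. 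The only step where anything nonobvious happens is the pair of shift identities $\lceil (b+1)/2 \rceil = \lfloor b/2 \rfloor + 1$ and $\lfloor (b+1)/2 \rfloor = \lceil b/2 \rceil$, which trade a ceiling for a floor (and vice versa) at the cost of a $+1$; I would verify these by a secondary parity split on $b$. Substituting them yields $\lceil (a+b)/2 \rceil = (k+1) + \lfloor b/2 \rfloor = \lceil a/2 \rceil + \lfloor b/2 \rfloor$ and $\lfloor (a+b)/2 \rfloor = k + \lceil b/2 \rceil = \lfloor a/2 \rfloor + \lceil b/2 \rceil$, exactly as claimed.

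There is no genuine obstacle here; the content is entirely elementary integer arithmetic, and the proof is really just careful bookkeeping. The one thing to watch is the asymmetry between the even and odd cases — ceiling pairing with ceiling in the even case versus ceiling pairing with floor in the odd case — which mirrors the shift identities above, so the sole risk is transcribing a floor where a ceiling belongs. As a consistency check I would note that in every case the two quantities on the right-hand sides must satisfy $\lceil (a+b)/2 \rceil + \lfloor (a+b)/2 \rfloor = a+b$, since $\lceil a/2 \rceil + \lfloor a/2 \rfloor = a$ and likewise for $b$; both the even and the odd formulas respect this, which guards against a misplaced rounding symbol.
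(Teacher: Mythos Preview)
Your proof is correct; the paper itself states the lemma without proof, introducing it merely as ``a simple fact about integers,'' so your direct parity case analysis is exactly the kind of routine verification the paper leaves to the reader.
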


We use variables $x \in \ZZ\sp{n_{1}}$ and $y \in \ZZ\sp{n_{2}}$ for 
multimodular functions $f_{1}$ and $f_{2}$, respectively.
To reduce the argument 
to \Lnat-convex functions,
we transform the variables 
$x$ and $y$
for multimodular functions 
to variables $p$ and $q$
for \Lnat-convex functions
through the relations $x = D_{1} p$ and $y = D_{2} q$
using matrices $D_{1}$ and $D_{2}$
of the form of \eqref{matDdef}
of sizes $n_{1}$ and $n_{2}$.
We also transform the variable 
$(x,y)$ for $f_{1} \oplus f_{2}$
to a variable $r \in \ZZ\sp{n_{1} + n_{2}}$
in a similar manner.
The following lemma reveals that $r$ is not equal to $(p,q)$,
but is equal to $(p, \ p_{*} \bm{1} + q)$,
where $p_{*}$ denotes the last component of $p$.

\begin{lemma} \label{LMmmfn2}
If $z = (x,y)$ with 
$x = D_{1} p$, \ 
$y = D_{2} q$, \  and 
$z = \tilde D r$, then
\begin{equation} \label{mmfnrpq}
r = (p, p_{*} \bm{1} + q), 
\end{equation}
where $p_{*}$ denotes the last component of $p$, i.e., $p_{*} = p_{n_{1}}$.
\end{lemma}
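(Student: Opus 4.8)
The plan is to invert all three relations explicitly using the closed form of $D^{-1}$ recorded in \eqref{matDinv}, which turns each of the linear maps into a partial-sum operation, and then simply to compare the partial sums of the concatenated vector $z = (x,y)$ with those of $x$ and $y$ taken separately. Since $D_{1}$, $D_{2}$, and $\tilde D$ are all of the form \eqref{matDdef} (of sizes $n_{1}$, $n_{2}$, and $n_{1}+n_{2}$), their inverses are the all-ones lower-triangular matrices described by \eqref{matDinv}, so no matrix inversion has to be carried out afresh.

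First I would rewrite the three hypotheses in inverted form. The relation $x = D_{1} p$ is equivalent to $p_{i} = x_{1} + x_{2} + \cdots + x_{i}$ for $i = 1, \ldots, n_{1}$; likewise $y = D_{2} q$ gives $q_{j} = y_{1} + \cdots + y_{j}$ for $j = 1, \ldots, n_{2}$; and $z = \tilde D r$ gives $r_{k} = z_{1} + z_{2} + \cdots + z_{k}$ for $k = 1, \ldots, n_{1}+n_{2}$. Next I would exploit the concatenated structure $z = (x,y)$, that is, $z_{k} = x_{k}$ for $k \leq n_{1}$ and $z_{n_{1}+j} = y_{j}$ for $1 \leq j \leq n_{2}$, to evaluate these partial sums. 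For an index $k \leq n_{1}$ one obtains at once $r_{k} = x_{1} + \cdots + x_{k} = p_{k}$, so the first $n_{1}$ components of $r$ reproduce $p$. For an index of the form $k = n_{1}+j$ I would split the sum at $n_{1}$, getting
\[
r_{n_{1}+j} = (x_{1} + \cdots + x_{n_{1}}) + (y_{1} + \cdots + y_{j}) = p_{n_{1}} + q_{j} = p_{*} + q_{j},
\]
which shows that the last $n_{2}$ components of $r$ equal $p_{*} \bm{1} + q$. Combining the two blocks yields exactly \eqref{mmfnrpq}.

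There is essentially no analytic obstacle here; the content is index bookkeeping. The one point to watch is the junction at $k = n_{1}$: the constant offset $p_{*}$ appears precisely because the partial sum over the first block has already accumulated to the value $p_{n_{1}}$ by the time the $y$-entries begin to contribute, so every $q_{j}$ inherits this shift. This is also the conceptual reason why the direct sum of multimodular functions does not correspond to the plain direct sum of the associated \Lnat-convex functions, but to the shifted coupling $(p, p_{*}\bm{1}+q)$ that the subsequent lemmas must handle.
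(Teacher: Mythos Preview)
Your proof is correct and follows essentially the same approach as the paper: both invert the relations using the explicit lower-triangular all-ones form of $D^{-1}$ from \eqref{matDinv}. The only cosmetic difference is that the paper packages the computation in block-matrix notation (writing $\tilde D^{-1}$ as a $2\times 2$ block matrix and multiplying through), whereas you carry it out componentwise via partial sums; the content is identical.
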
 
\begin{proof}
The inverse of a matrix of the form \eqref{matDdef} 
is the lower triangular matrix in \eqref{matDinv},
which implies
\[
{\tilde D}\sp{-1}
= 
\left[ \begin{array}{cc}
 D_{1}\sp{-1} & O \\ {\bm 1} {\bm 1}\sp{\top}  & D_{2}\sp{-1} 
\end{array}\right] ,
\]
where ${\bm 1} {\bm 1}\sp{\top}$ is an $n_{2} \times n_{1}$ matrix. 
Since
$z = \tilde D r$,
$x = D_{1} p$, and
$y = D_{2} q$, 
we obtain
\begin{align*} 
 r 
&= {\tilde D}\sp{-1} z 
= 
\left[ \begin{array}{cc}
 D_{1}\sp{-1} & O \\ {\bm 1} {\bm 1}\sp{\top}  & D_{2}\sp{-1} 
\end{array}\right]
\left[ \begin{array}{c}
 x \\ y
\end{array}\right]
= 
\left[ \begin{array}{cc}
 D_{1}\sp{-1} & O \\ {\bm 1} {\bm 1}\sp{\top}  & D_{2}\sp{-1} 
\end{array}\right]
\left[ \begin{array}{cc}
 D_{1} & O \\ O  & D_{2} 
\end{array}\right]
\left[ \begin{array}{c}
 p \\ q
\end{array}\right]
= 
\left[ \begin{array}{cc}
 I & O \\ {\bm 1} {\bm 1}\sp{\top} D_{1}  & I 
\end{array}\right]
\left[ \begin{array}{c}
 p \\ q
\end{array}\right] .
\end{align*}
It is easy to verify from the definition \eqref{matDdef} that 
each row of the matrix
${\bm 1} {\bm 1}\sp{\top} D_{1}$
is the $n_{1}$-dimensional unit vector
$(0,\ldots, 0, 1)$ having 1 in the last entry.
Therefore, $r = (p, p_{*} \bm{1} + q)$ as in \eqref{mmfnrpq}.
\end{proof}

Let
\[
\tilde  f = f_{1} \oplus f_{2},
\qquad
\tilde g (r) = \tilde  f (\tilde D r), 
\qquad
 g_{1} (p) = f_{1} (D_{1} p), 
\qquad
 g_{2} (q) = f_{2} (D_{2} q) .
\]
Since $f_{1}$ and $f_{2}$ are multimodular by assumption, 
$g_{1}$ and $g_{2}$ are \Lnat-convex by Theorem~\ref{THmmfnlnatfn} (only-if part).
We prove the \Lnat-convexity of  $\tilde g$
by showing its discrete midpoint convexity:
\begin{equation} \label{tilgmptconv}
 \tilde g(r) + \tilde  g(r') \geq
   \tilde g \left(\left\lceil \frac{r+r'}{2} \right\rceil\right) 
  + \tilde g \left(\left\lfloor \frac{r+r'}{2} \right\rfloor\right) 
\qquad (r, r' \in \ZZ\sp{n_{1} + n_{2}})   .
\end{equation}
The multimodularity of $\tilde f = f_{1} \oplus  f_{2}$ follows from this
by Theorem~\ref{THmmfnlnatfn} (if part).
It is noted that $\tilde g \not= g_{1} \oplus  g_{2}$ in general.

On the left-hand side of \eqref{tilgmptconv} we have 
\begin{equation} \label{tilgr}
\tilde g (r) 
= \tilde  f (\tilde D r)
= \tilde  f (x, y) 
= f_{1}(x) + f_{2}(y) = g_{1}(p) + g_{2}(q) ,
\end{equation}
where
$(x,y)\sp{\top} = \tilde D r$,
$p = D_{1}\sp{-1} x$, and 
$q = D_{2}\sp{-1} y$.
Similarly,
\begin{equation} \label{tilgrprime}
\tilde g (r') 
= \tilde  f (\tilde D r')
= \tilde  f (x', y') 
= f_{1}(x') + f_{2}(y') = g_{1}(p') + g_{2}(q') ,
\end{equation}
where
$(x',y')\sp{\top} = \tilde D r'$,
$p' = D_{1}\sp{-1} x'$, and 
$q' = D_{2}\sp{-1} y'$.

For the right-hand side of \eqref{tilgmptconv} we use
$r = (p, p_{*} \bm{1} + q)$ and
$r' = (p', p'_{*} \bm{1} + q')$
in \eqref{mmfnrpq}
to see
\begin{align} 
 \left\lceil \frac{r+r'}{2} \right\rceil
= \left( 
\left\lceil \frac{p+p'}{2} \right\rceil ,
\left\lceil \frac{p_{*}+p_{*}'}{2} \bm{1} + 
     \frac{q+q'}{2} \right\rceil 
\right),
 \label{hintup2}
\\
 \left\lfloor \frac{r+r'}{2} \right\rfloor
= \left( 
\left\lfloor \frac{p+p'}{2} \right\rfloor ,
\left\lfloor \frac{p_{*}+p_{*}'}{2} \bm{1} + 
     \frac{q+q'}{2} \right\rfloor 
\right) .
 \label{hintdown2}
\end{align}
We now apply Lemma \ref{LMhint}.

Suppose that $p_{*}+p_{*}'$ is even.  
By Lemma \ref{LMhint}
(with $a = p_{*}+p_{*}'$
and
$b = q_{i} +q'_{i}$ for $i=1,2,\ldots, n_{2}$),
 we obtain
\begin{align*} 
 \left\lceil \frac{r+r'}{2} \right\rceil
= \left( 
\left\lceil \frac{p+p'}{2} \right\rceil ,
\left\lceil \frac{p_{*}+p_{*}'}{2} \right\rceil  \bm{1} + 
   \left\lceil   \frac{q+q'}{2} \right\rceil 
\right),
\\
 \left\lfloor \frac{r+r'}{2} \right\rfloor
= \left( 
\left\lfloor \frac{p+p'}{2} \right\rfloor ,
\left\lfloor \frac{p_{*}+p_{*}'}{2} \right\rfloor  \bm{1} + 
  \left\lfloor    \frac{q+q'}{2} \right\rfloor 
\right) .
\end{align*}
These vectors are of the form
$(\hat p, \hat p_{*} \bm{1} + \hat q)$ 
with
\[
 (\hat p, \hat q)  = 
\left (\left\lceil \frac{p+p'}{2} \right\rceil , \left\lceil \frac{q+q'}{2} \right\rceil \right) ,
\quad
\left (\left\lfloor \frac{p+p'}{2} \right\rfloor , \left\lfloor \frac{q+q'}{2} \right\rfloor \right) ,
\]
respectively.
Therefore,
\begin{align} 
 \tilde g \left( \left\lceil \frac{r+r'}{2} \right\rceil \right)
= 
g_{1} \left( \left\lceil \frac{p+p'}{2} \right\rceil \right) 
 + g_{2} \left(  \left\lceil   \frac{q+q'}{2} \right\rceil \right),
 \label{hintup4e}
\\
 \tilde g \left( \left\lfloor \frac{r+r'}{2} \right\rfloor \right)
= g_{1} \left( \left\lfloor \frac{p+p'}{2} \right\rfloor \right) 
 + g_{2} \left(  \left\lfloor   \frac{q+q'}{2} \right\rfloor \right) .
 \label{hintdown4e}
\end{align}
By 
\eqref{tilgr},
\eqref{tilgrprime},
\eqref{hintup4e},
\eqref{hintdown4e},
and the discrete midpoint convexity of 
$g_{1}$ and $g_{2}$, we obtain 
the discrete midpoint convexity of $\tilde g$ in
\eqref{tilgmptconv}.

Suppose that $p_{*}+p_{*}'$ is odd
in \eqref{hintup2} and \eqref{hintdown2}.
By Lemma \ref{LMhint}
(with $a = p_{*}+p_{*}'$
and
$b = q_{i} +q'_{i}$ for $i=1,2,\ldots, n_{2}$),
 we obtain
\begin{align*} 
 \left\lceil \frac{r+r'}{2} \right\rceil
= \left( 
\left\lceil \frac{p+p'}{2} \right\rceil ,
\left\lceil \frac{p_{*}+p_{*}'}{2} \right\rceil  \bm{1} + 
   \left\lfloor   \frac{q+q'}{2} \right\rfloor 
\right),
\\
 \left\lfloor \frac{r+r'}{2} \right\rfloor
= \left( 
\left\lfloor \frac{p+p'}{2} \right\rfloor ,
\left\lfloor \frac{p_{*}+p_{*}'}{2} \right\rfloor  \bm{1} + 
  \left\lceil    \frac{q+q'}{2} \right\rceil 
\right) .
\end{align*}
These vectors are of the form
$(\hat p, \hat p_{*} \bm{1} + \hat q)$ 
with
\[
 (\hat p, \hat q)  = 
\left (\left\lceil \frac{p+p'}{2} \right\rceil , \left\lfloor \frac{q+q'}{2} \right\rfloor \right) ,
\quad
\left (\left\lfloor \frac{p+p'}{2} \right\rfloor , \left\lceil \frac{q+q'}{2} \right\rceil \right) ,
\]
respectively.
Therefore,
\begin{align} 
 \tilde g \left( \left\lceil \frac{r+r'}{2} \right\rceil \right)
= 
g_{1} \left( \left\lceil \frac{p+p'}{2} \right\rceil \right) 
 + g_{2} \left(  \left\lfloor   \frac{q+q'}{2} \right\rfloor \right),
 \label{hintup4o}
\\
 \tilde g \left( \left\lfloor \frac{r+r'}{2} \right\rfloor \right)
= g_{1} \left( \left\lfloor \frac{p+p'}{2} \right\rfloor \right) 
 + g_{2} \left(  \left\lceil   \frac{q+q'}{2} \right\rceil \right) .
 \label{hintdown4o}
\end{align}
By 
\eqref{tilgr},
\eqref{tilgrprime},
\eqref{hintup4o},
\eqref{hintdown4o},
and the discrete midpoint convexity of 
$g_{1}$ and $g_{2}$, we obtain 
the discrete midpoint convexity of $\tilde g$ in
\eqref{tilgmptconv}.
This completes the proof of
Proposition \ref{PRfndirsumMult}.

\subsubsection{Proof via polyhedral description}
\label{SCmmsetdirsumprf}

In this section we give an alternative proof of 
Proposition \ref{PRsetdirsumMult} for multimodular sets
based on their polyhedral descriptions.

Let
$S_{1} \subseteq \ZZ\sp{n_{1}}$
and 
$S_{2} \subseteq \ZZ\sp{n_{2}}$,
and also
$N_{1} = \{ 1,2,\ldots, n_{1} \}$
and
$N_{2} = \{ n_{1}+1, n_{1}+2,\ldots, n_{1}+n_{2} \}$.
By the polyhedral description of multimodular sets 
(cf., Theorem~\ref{THmmsetpolydes} (only-if part)), 
$S_{1}$ and $S_{2}$ can be described as
\begin{align*}
 S_{1} &= \{ x \in \ZZ\sp{n_{1}} 
 \mid  a_{I}\sp{1} \leq x(I) \leq b_{I}\sp{1}
      \ \ (\mbox{\rm $I$: consecutive interval in $N_{1}$})   \}, 
\\
 S_{2} &= \{ y \in  \ZZ\sp{n_{2}} 
 \mid  a_{J}\sp{2} \leq y(J) \leq b_{J}\sp{2}
      \ \ (\mbox{\rm $J$: consecutive interval  in $N_{2}$})   \}
\end{align*}
for some integers 
$a_{I}\sp{1}$ and $b_{I}\sp{1}$
indexed by consecutive intervals $I \subseteq N_{1}$,
and
$a_{J}\sp{2}$ and $b_{J}\sp{2}$
indexed by consecutive intervals $J \subseteq N_{2}$,
where 
$a_{I}\sp{1}, a_{J}\sp{2} \in \ZZ \cup \{ -\infty \}$ 
and $b_{I}\sp{1}, b_{J}\sp{2} \in \ZZ \cup \{ +\infty \}$.
For consecutive intervals $K \subseteq N_{1} \cup N_{2}$ define
\[
 a_{K} = 
 \begin{cases} 
 a_{K}\sp{1} & (K \subseteq N_{1}), \\
 a_{K}\sp{2} & (K \subseteq N_{2}), \\
 -\infty  & (\textrm{otherwise}),
 \end{cases}
\quad
 b_{K} = 
 \begin{cases} 
 b_{K}\sp{1} & (K \subseteq N_{1}), \\
 b_{K}\sp{2} & (K \subseteq N_{2}), \\
 +\infty & (\textrm{otherwise}).  
 \end{cases}
\]
Then we have
\begin{align*}
 S_{1} \oplus S_{2} = \{ z \in \ZZ\sp{n_{1} + n_{2}}
 \mid  a_{K} \leq z(K) \leq b_{K}
      \ \ (\mbox{\rm $K$: consecutive interval in $N_{1} \cup N_{2}$})   \}, 
\end{align*}
which shows, by Theorem~\ref{THmmsetpolydes} (if part), that 
$ S_{1} \oplus S_{2}$ is a multimodular set.

\begin{remark} \rm \label{RMmmfndirsumBySet}
The above alternative proof of Proposition \ref{PRsetdirsumMult}
for multimodular sets 
is shorter and simpler than the proof of Section \ref{SCmmfndirsumprf}
based on discrete midpoint convexity.
Furthermore, this gives an alternative proof of
Proposition \ref{PRfndirsumMult}
for multimodular functions
in the special case where $f_{1}$ are $f_{2}$ have bounded effective domains
or they are convex-extensible.
If $\dom f_{1}$ and $\dom f_{2}$ are bounded,
then $\dom (f_{1} \oplus  f_{2}) = \dom f_{1} \oplus \dom f_{2}$ 
is also bounded, and we may use 
Theorem~\ref{THmmfnargmin} 
that characterizes a multimodular function
in terms of its minimizers.
Let $\tilde f = f_{1} \oplus f_{2}$ and
$c = (c_{1}, c_{2})$.  Then we have
\[
\argmin \tilde f[-c] = (\argmin f_{1}[-c_{1}]) \oplus (\argmin f_{2}[-c_{2}])  .
\]
Here,
$\argmin f_{1}[-c_{1}]$ and $\argmin f_{2}[-c_{2}]$
are multimodular sets
by Theorem~\ref{THmmfnargmin} (only-if part),
and their direct sum 
is also multimodular by
Proposition \ref{PRsetdirsumMult}.
Therefore, 
$\tilde f$ is a multimodular function 
by Theorem~\ref{THmmfnargmin} (if part).
\end{remark}

\subsection{Proof for the splitting of multimodular sets and functions}
\label{SCmmsplitprf}

Here is a proof of 
Proposition \ref{PRsplitfnMult}
concerning the splitting of a multimodular function.
Proposition \ref{PRsplitsetMult}
for a multimodular set follows from this
as a special case for the indicator function of a set.
The proof makes use of the reduction to \Lnat-convex functions,
and it turns out that the splitting operation for a multimodular function
corresponds to introducing a dummy variable 
to an \Lnat-convex function that does not affect the function value.%
\footnote{
See \eqref{splitMultprf4} at the end of the proof, 
where the value of function $\hat g$ does not depend on the variable $q'_{k}$. 
} 

Let $f$ be a multimodular function and $g$ be an elementary splitting
of $f$ defined by
\[
 g(y_{1}, \dots, y_{k-1}, y'_{k}, y''_{k}, y_{k+1},  \dots , y_{n}) 
 =  f(y_{1}, \dots, y_{k-1}, y'_{k}+y''_{k}, y_{k+1},  \dots , y_{n}) .
\]
We can express this as
\[
 g(y) = f(C y) ,
\]
where
$y = (y_{1}, \dots, y_{k-1}, y'_{k}, y''_{k}, y_{k+1},  \dots , y_{n}) \in \ZZ\sp{n+1}$
and
$C = (C_{ij})$ is an $n \times (n+1)$ matrix
defined by
\begin{equation} \label{matCdef}
 C_{ij}=
 \begin{cases}
 1 & \mbox{if $1 \leq i=j \leq k$ \ or \  $k \leq i=j-1\leq n$,} \\
 0 & \mbox{otherwise}. 
 \end{cases}
\end{equation}
The correspondence of the variables is given by
\begin{equation} \label{xCy}
 x = C y ,
\end{equation}
where $x = (x_{1}, \dots, x_{k-1}, x_{k}, x_{k+1},  \dots , x_{n}) \in \ZZ\sp{n}$.

To show the multimodularity of $g$, we consider functions
$\hat f$ and $\hat g$ defined by 
\[
\hat f(p)=f(D_{n} p) ,  \qquad  \hat g(q)=g(D_{n+1} q) ,
\]
where 
$D_{n}$ is the $n \times n$ matrix of the form of \eqref{matDdef}
and
$D_{n+1}$ is the $(n+1) \times (n+1)$ matrix of the form of \eqref{matDdef}.
The correspondences of the variables are given by
\begin{equation} \label{xDpyDq}
 x = D_{n} p ,  \qquad  y=D_{n+1} q .
\end{equation}

It follows from \eqref{xCy} and \eqref{xDpyDq} that 
\[
p = D_{n}\sp{-1} x = D_{n}\sp{-1} C y =  D_{n}\sp{-1} C D_{n+1} q .
\] 
By straightforward calculation using the definitions 
\eqref{matDdef}, \eqref{matDinv}, and \eqref{matCdef}, we can obtain
that the $(i,j)$ entry of $D_{n}\sp{-1} C D_{n+1}$ is given as
\begin{equation} \label{matDnCDn1}
 (D_{n}\sp{-1} C D_{n+1})_{ij}=
 \begin{cases}
 1 & \mbox{if $1 \leq i=j \leq k-1$ \ or \  $k \leq i=j-1\leq n$,} \\
 0 & \mbox{otherwise} .
 \end{cases}
\end{equation}
Therefore, the correspondence of the variables
$p = (p_{1}, \dots, p_{k-1}, p_{k}, p_{k+1},  \dots , p_{n})$
and 
$q = (q_{1}, \dots, q_{k-1}, q'_{k}, q''_{k}, q_{k+1},  \dots , q_{n})$
is given by
\[
(p_{1}, \dots, p_{k-1}, p_{k}, p_{k+1},  \dots , p_{n})
= 
(q_{1}, \dots, q_{k-1}, q''_{k}, q_{k+1},  \dots , q_{n}).
\]
This shows that $\hat g$ does not depend on $q'_{k}$ 
and
\begin{equation}  \label{splitMultprf4}
\hat g (q_{1}, \dots, q_{k-1}, q'_{k}, q''_{k}, q_{k+1},  \dots , q_{n})
=\hat f (q_{1}, \dots, q_{k-1}, q''_{k}, q_{k+1},  \dots , q_{n}),
\end{equation}
in which $\hat f$ is \Lnat-convex.
Therefore, $\hat g$ is \Lnat-convex, which implies, 
by Theorem~\ref{THmmfnlnatfn},  that $g$ is multimodular.


\section*{Acknowledgement}
The author thanks 
 Satoko Moriguchi,
 Akiyoshi Shioura, 
 and Akihisa Tamura for discussion and comments.
This work was supported by 
JSPS KAKENHI Grant Numbers 
JP26280004, JP20K11697.



\end{document}